\title[Bounding The Orlov Spectrum]{Bounding The Orlov Spectrum \\For A Completion Of Discrete Cluster Categories}
\author{Dave Murphy}
\subjclass{18G80,13F60}
\keywords{Cluster Category, Thick Subcategories, Classical Generators, Rouquier Dimension, Orlov Spectrum}
\date{}
\begin{document}

\begin{abstract}
We classify thick subcategories in a Paquette-Y\i ld\i r\i m completion $\overline{\cC}$ of a discrete cluster category of Dynkin type $A_{\infty}$.
To do this we introduce the notion of homologically connected objects, and the hc (=homologically connected) decomposition of an object into homologically connected objects in a $\mathrm{Hom}$-finite, Krull-Schmidt triangulated category.
We show that any object in a $\overline{\cC}$ has a hc decomposition, and that the hc decomposition determines the thick closure of an object.
Moreover, we use this result to classify the classical generators of $\overline{\cC}$ as homologically connected objects satisfying a maximality condition.

Every homologically connected object has an invariant, known as the homological length, and we show that in $\overline{\cC}$ this homological length is an upper bound for the generation time of a classical generator.
This allows us to provide an upper bound for the Orlov spectrum of $\overline{\cC}$, as well as giving the Rouquier dimension.
\end{abstract}

\maketitle

\section{Introduction}

Igusa and Todorov introduced discrete cluster categories of Dynkin type $A_{\infty}$, \cite{Igusa2013}, as a stable Frobenius category coming from a category defined via cyclic posets.
They are a nice class of 2-Calabi-Yau triangulated category, and generalise the cluster categories of Dynkin type $A_n$ introduced by Caldero, Chapoton and Schiffler \cite{CCS}, and more generally by Buan, Marsh, Reikene, Reiten and Todorov \cite{BMRRT} for general acyclic and finite quivers.

This generalised an earlier category studied by Holm and J\o rgensen \cite{Holm2009}, where the authors look at the finite derived category $\mathrm{D}^{\mathrm{f}}(R)$, for the differential graded algebra $R = k [T]$ with $T$ placed in homological degree 1, and trivial differential.
This is shown to have indecomposable objects in bijection to the set of arcs between non-consecutive integers on the number line.

It has been known since their introduction that a cluster category of Dynkin type $A_n$ has an associated combinatorial model, with indecomposable in bijection to diagonals of the $(n+3)$-gon, and cluster tilting objects in bijection to triangulations of the $(n+3)$-gon.
A discrete cluster category of Dynkin type $A_{\infty}$, labelled $\cC_n$, can be analogously modelled by $(S,\sM)$ a unit circle with infinitely many discrete marked points, $\sM$, on the boundary and $n$ two-sided limits of marked points, known as accumulation points.
The indecomposable objects are in bijection to the diagonals of $(S,\sM)$, and an $\mathrm{Ext}^1$-space between indecomposable objects $X,Y$ is non-zero if and only if the diagonals corresponding to $X,Y$ cross.

The case of $(S,\sM)$ having a single accumulation point is equivalent to the category studied by Holm and J\o rgensen in \cite{Holm2009}.

A natural question to ask given the combinatorial model is what would happen if we allowed the accumulation points to be included as marked points?

The first answer to this question was given by Fisher in \cite{Fisher2014} for the category studied by Holm and J\o rgensen.
This was done by formally including a certain class of homotopy colimits for slices in the Auslander-Reiten quiver of $\cC_1$, and showing that the resulting category had a triangulated structure

More generally, an answer was given by Paquette and Y\i ld\i r\i m \cite{Paquette2020} for $\cC_n$ for $n \geq 1$, where they constructed a new family of triangulated categories $\ocC$, for $n \geq 1$.
Informally, the authors do this by "opening up" each accumulation points into two accumulation points, adding more marked points in between the new accumulation points, and taking a Verdier localisation of the resulting category by a thick subcategory induced by the new marked points.
This is equivalent to $\ocC$ being a Verdier localisation of $\cC_{2n}$, where the combinatorial model $(S,\sM')$ has $2n$ accumulation points $\{a_1,\ldots,a_{2n}\}$ with a cyclic order, by a thick subcategory with indecomposable objects in bijection to the diagonals with both endpoints between the accumulation points $a_{2i}$ and $a_{2i+1}$ for some $1 \leq i \leq n$.

The authors in \cite{Paquette2020} describe the cluster tilting subcategories of $\ocC$, and define a cluster character for all such subcategories.

Our main goal is to study the classical generators of $\ocC$ and, more generally, we wish to determine the thick closure of any given object in $\ocC$.
We also look at the generation times of the classical generators, and subsequently the Orlov spectrum and Rouquier dimension.

An object $X$ in a triangulated category $\cT$ is a classical generator if the smallest thick subcategory containing $X$ is the category $\cT$ itself.
Classical generators have been used; to determine a dimension on triangulated categories \cite{Rouquier}, to find an equivalence between an algebraic triangulated category and the derived category of a differential graded algebra \cite{KellerDeriving}, and as a necessary condition on certain triangulated categories to be saturated \cite{BvdBGenerators}.

In order to do this, we introduce the notion of \textit{homologically connected} objects in a $\mathrm{Hom}$-finite, $k$-linear triangulated category with suspension functor $\wb{1}$.
An object $X$ in a triangulated category $\cT$ is homologically connected if there exists a sequence of non-zero morphisms of degree 1 (i.e.\ elements of $\Ext{1}{-}{-}$) between any two indecomposable objects in the subcategory,
\[
\lang[1]{X} =\mathrm{add}\{X \wb{i} \mid i \in \bZ\}.
\]
Suppose $X$ is a homologically connected object, then we say that the \textit{homological length} of $X$ is the supremum of all minimal sequences of non-zero morphisms between any two indecomposable objects in $\lang[1]{X}$.

We show that, given a $\mathrm{Hom}$-finite, $k$-linear triangulated category $\cT$ such that all indecomposable objects are homologically connected, then an object $X \in T$ a unique decomposition into homologically connected direct summands $X \cong \bigoplus_{i \in I} X_i$, such that $X_i \oplus X_j$ is not homologically connected for all $i,j \in I$, $i \neq j$.
This decomposition is called the \textit{hc (=homologically connected) decomposition} of $X$.

Thick subcategories of $\cC_n$ were classified by Gratz and Zvonareva in \cite{GZ2021} by means of an isomorphism between the lattice of thick subcategories of $\cC_n$ and the lattice of non-exhaustive non-crossing partitions of $[n]$, labelled $NNC_n$.

We adapt this approach to the thick subcategories of $\ocC$, introducing a related concept of even-exclusive non-exhaustive non-crossing partitions of $[2n]$, and showing in Theorem \ref{Thm:Lattices} that there is an isomorphism between the lattice of thick subcategories of $\ocC$ and the lattice of even-exclusive non-exhaustive non-crossing partitions of $[2n]$, called $eNNC_{2n}$.
Furthermore, we provide a means for determining the thick closure of any object $X \in \ocC$ by associating a partition in $eNNC_{2n}$ to the unique hc decomposition of $X$.

\begin{theorem}[Theorem \ref{Thm:Lattices}]
    There is an isomorphism of lattices,
    \[
    eNNC_{2n} \cong \mathrm{thick}(\ocC).
    \]
    Moreover, there is a commutative diagram of lattices 
    \[
    \begin{tikzcd}
        NNC_{2n} \arrow[d,"\eta",swap] \arrow[rr,"\sim"] && \mathrm{thick}(\cC_{2n}) \arrow[d,"\pi",swap]\\
        eNNC_{2n}  \arrow[rr,"\sim"] && \mathrm{thick}(\ocC) \\
        NNC_n \arrow[rr,"\sim"] \arrow[u,"\zeta"] && \mathrm{thick}(\cC_n) \arrow[u,"\xi"].
    \end{tikzcd}
    \]
\end{theorem}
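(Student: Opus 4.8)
The strategy is to deduce everything from the Gratz--Zvonareva classification of $\mathrm{thick}(\cC_{2n})$ together with the presentation $\ocC\simeq\cC_{2n}/\cD$, where $\cD$ is the thick subcategory spanned by the diagonals having both endpoints in one of the gaps $(a_{2i},a_{2i+1})$. Two inputs will be used throughout. First, the standard description of thick subcategories of a Verdier quotient: for a thick subcategory $\cD$ of a triangulated category $\cT$ with quotient functor $Q\colon\cT\to\cT/\cD$, the maps $\cU\mapsto\mathrm{thick}(Q\cU)$ and $\cV\mapsto Q^{-1}(\cV)$ are mutually inverse, order-preserving bijections between the interval $[\cD,\cT]=\{\cU\in\mathrm{thick}(\cT)\mid\cD\subseteq\cU\}$ and $\mathrm{thick}(\cT/\cD)$; since $[\cD,\cT]$ is closed under intersection it is a sublattice of $\mathrm{thick}(\cT)$, so this bijection is an isomorphism of lattices. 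Second, the Gratz--Zvonareva isomorphism of lattices $NNC_m\cong\mathrm{thick}(\cC_m)$, which I will apply for $m=n$ and $m=2n$.

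For the isomorphism $eNNC_{2n}\cong\mathrm{thick}(\ocC)$, I would transport the interval $[\cD,\cC_{2n}]$ across the Gratz--Zvonareva isomorphism for $\cC_{2n}$: it becomes the principal filter $\{P\in NNC_{2n}\mid P\geq P_\cD\}$, where $P_\cD\in NNC_{2n}$ is the non-crossing partition classifying $\cD$, namely the one whose non-singleton blocks are precisely the $n$ pairs $\{2i,2i+1\}$ indexing the gaps. Pinning down $P_\cD$ inside the classification --- tracing which set of diagonals is assigned to which partition --- is the most delicate bookkeeping point. It then remains to identify this filter, with the lattice operations it inherits from $NNC_{2n}$, with $eNNC_{2n}$: concretely, $P\geq P_\cD$ exactly when each pair $\{2i,2i+1\}$ lies in a single block of $P$, which up to the relabelling built into the definition of $eNNC_{2n}$ is the even-exclusivity condition, and the filter is closed under the (non-crossing-closure) join because that operation is order-preserving while $P_\cD$ is itself non-crossing.

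Commutativity of the upper square is then essentially forced. By definition $\pi$ is the localisation-induced map $\cU\mapsto\mathrm{thick}(Q\cU)$, which by the first input corresponds to $\cU\mapsto\cU\vee\cD$ inside $[\cD,\cC_{2n}]$; transported across Gratz--Zvonareva this becomes $P\mapsto P\vee P_\cD$ followed by the identification of the filter with $eNNC_{2n}$, and this is exactly the combinatorial map $\eta$. Thus the square commutes once one checks that the two a priori independent descriptions of $\eta$ coincide, a naturality statement for the Gratz--Zvonareva bijection under the passage from $\cC_{2n}$ to its localisation, verified by tracking indecomposables (diagonals) and the vanishing or non-vanishing of their $\mathrm{Ext}^{1}$-crossings through $Q$. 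For the lower square I would use the natural functor $\iota\colon\cC_n\to\ocC$ provided by the Paquette--Y\i ld\i r\i m construction of the completion, so that $\xi(\cU)=\mathrm{thick}_{\ocC}(\iota\cU)$; commutativity then says that, under Gratz--Zvonareva for $\cC_n$ and the isomorphism just constructed, $\xi$ corresponds to the ``doubling'' relabelling $\zeta\colon NNC_n\to eNNC_{2n}$ replacing each accumulation point $b_i$ of $\cC_n$ by the pair $a_{2i},a_{2i+1}$. Once again this is checked on diagonals, using that $\iota$ both preserves and reflects crossings so that blocks and non-crossing closures transport correctly along $\zeta$.

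The step I expect to be the main obstacle is the combinatorial core of the second paragraph: identifying $P_\cD$ precisely within the Gratz--Zvonareva bijection and then proving that even-exclusivity cuts out exactly the principal filter of $P_\cD$ as a \emph{sublattice} --- in particular that joins, which involve non-crossing closures and interact with the ``non-exhaustive'' slack inherent in $NNC_{2n}$, stay inside the filter and agree with the join defined intrinsically on $eNNC_{2n}$. With that in hand the two squares reduce to naturality checks that are routine given the combinatorial model, carried out by following diagonals and their crossings.
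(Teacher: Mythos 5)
Your overall architecture (pass to the interval of thick subcategories of $\cC_{2n}$ containing $\cD$ via the Verdier quotient, transport it along the Gratz--Zvonareva isomorphism, then identify the resulting filter combinatorially) is legitimate and close in spirit to what the paper does with Proposition \ref{Prop:Thick Subcats}, Lemma \ref{Lem:Thick objects} and Lemma \ref{Lem:Objects and Partitions 2}. However, the combinatorial core -- the step you yourself flag as the delicate one -- is carried out incorrectly. Under the dictionary used in the paper, a block $B$ corresponds to the arcs with both endpoints in $\bigcup_{i\in B}(a_i,a_{i+1})$, so the subcategory $\cD$ of Construction \ref{Cons:Verdier Localisation} (short arcs lying inside a single gap $(a_{2i},a_{2i+1})$) corresponds to the partition whose blocks are the singletons $\{2\},\{4\},\ldots,\{2n\}$, \emph{not} to the partition with pair blocks $\{2i,2i+1\}$; the latter would classify the strictly larger subcategory of all arcs with both endpoints in $(a_{2i},a_{2i+2})$, i.e.\ the wrong localisation. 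Consequently your description of the filter fails: for $n=2$, the partition $\{\{1,2\}\}\in eNNC_4$ contains neither $\{2,3\}$ nor $\{4,1\}$ in a single block, so under your identification it would correspond to no thick subcategory of $\ocC[2]$, although $\lang{\{\{1,2\}\}}$ is a genuine one.

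Even after correcting $P_\cD$ to $\{\{2\},\{4\},\ldots,\{2n\}\}$, the principal filter $\{P\geq P_\cD\}$ consists of the partitions in which every even label is \emph{covered}, and this is not $eNNC_{2n}$ ``up to relabelling'': in $eNNC_{2n}$ an even label may be absent altogether, it just may not occur as a singleton block. The correct identification is the map $\eta$ (delete the even singleton blocks), whose inverse adjoins $\{2i\}$ for each uncovered even $2i$; one must then verify that this is an order isomorphism and is compatible with the meet $\wedge_e$, which the paper builds into the lattice structure on $eNNC_{2n}$ precisely through $\eta$. With these corrections your upper square does reduce, as you intend, to the identity $\lang{\eta(\cP')}=\pi(\lang{\cP'})$, which the paper verifies block by block (even singleton blocks land in $\cD$ and die, all other blocks survive unchanged), and the lower square to the explicit formula $\zeta(\{i_1,\ldots,i_l\})=\{2i_1-1,\ldots,2i_l-1\}$ with $\xi$ the inclusion. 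So the strategy can be repaired, but as written both the partition $P_\cD$ and the claimed identification of the filter with $eNNC_{2n}$ are wrong, and these are exactly the points the rest of your argument leans on.
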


The classical generators of $\ocC$ are therefore those objects that have a hc decomposition corresponding to the maximum element in $eNNC_{2n}$.
We provide a means for determining which objects in $\ocC$ have such a hc decomposition, using a maximality condition on objects defined using the combinatorial model.

\begin{theorem}[Theorem \ref{Thm:GensOfCn}]
Let $G$ be an object in $\ocC$, then $G$ is a classical generator of $\ocC$ if and only if $G$ is homologically connected and $G$ has a complete orbit of $\mathscr{M}$.
\end{theorem}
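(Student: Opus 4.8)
The plan is to deduce this from Theorem~\ref{Thm:Lattices}. Under the isomorphism $eNNC_{2n}\cong\mathrm{thick}(\ocC)$ the thick closure $\mathrm{thick}(G)$ corresponds to a partition $\rho(G)\in eNNC_{2n}$, and, as part of the same theorem, $\rho(G)$ is computed from the unique hc decomposition $G\cong\bigoplus_{i\in I}G_i$: each homologically connected summand $G_i$ contributes one block $B_i\subseteq[2n]$ recording which accumulation points the arcs of $\lang[1]{G_i}$ reach, and $\rho(G)=\{B_i\mid i\in I\}$. Since $G$ classically generates if and only if $\mathrm{thick}(G)=\ocC$, i.e.\ if and only if $\rho(G)$ is the maximum $\hat 1$ of $eNNC_{2n}$, the statement reduces to the combinatorial equivalence: $\rho(G)=\hat 1$ if and only if $G$ is homologically connected and has a complete orbit of $\mathscr M$. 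First I would record that $\hat 1=\eta(\hat 1_{NNC_{2n}})$, so its single block is the largest subset of $[2n]$ permitted by even-exclusivity; this reflects that the only accumulation-point data lost in the completion $\cC_{2n}\to\ocC$ is that of the closed sectors between $a_{2i}$ and $a_{2i+1}$.

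Next, $\rho(G)$ consists of a single block exactly when the hc decomposition has a single summand. The nontrivial implication is that distinct homologically connected summands contribute distinct, pairwise non-crossing blocks --- part of what makes the assignment in Theorem~\ref{Thm:Lattices} well defined, and, geometrically, the statement that two summands whose arc-families reach a common accumulation point can always be linked by a non-zero degree-$1$ morphism, contradicting their separation in the hc decomposition. Hence $\rho(G)$ is a one-block partition if and only if $G$ is homologically connected; in particular a classical generator is homologically connected, and, conversely, adjoining any summand to a homologically connected $G$ with complete $\mathscr M$-orbit neither shrinks the orbit nor creates a second block, so $G\oplus Y$ remains a generator --- the compatibility the equivalence demands.

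It then remains to show, for homologically connected $G$, that its block $B$ is the block of $\hat 1$ if and only if $G$ has a complete orbit of $\mathscr M$. I would unwind both sides in the model $(S,\mathscr M)$: the suspension $\wb 1$ acts by rotation, so a complete $\mathscr M$-orbit says the endpoints of the arcs of $\lang[1]{G}$ exhaust $\mathscr M$, while $j\in B$ says those same arcs accumulate at, or are incident to, $a_j$. A marked point fails to be such an endpoint only when it is trapped inside a sector cut out by arcs of $\lang[1]{G}$, and for homologically connected $G$ this cannot happen once every non-excluded accumulation point is reached; so exhausting $\mathscr M$ is equivalent to $B$ containing every non-excluded index, i.e.\ to $B$ being the block of $\hat 1$. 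Chaining the three equivalences gives the theorem.

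The main obstacle is the sufficiency direction already absorbed into Theorem~\ref{Thm:Lattices}: that a homologically connected object reaching every accumulation point genuinely classically generates $\ocC$, in particular producing the new ``limit'' indecomposables of the completion from $G$ by iterated cones, shifts and summands. The cleanest route is the Verdier presentation $\ocC\cong\cC_{2n}/\mathcal D$, with $\mathcal D$ the thick subcategory spanned by the diagonals internal to the closed sectors: one lifts $G$ to $\widetilde G\in\cC_{2n}$, uses that thick closures descend along Verdier quotients, so that $\mathrm{thick}_{\ocC}(G)=\ocC$ if and only if $\mathrm{thick}_{\cC_{2n}}(\widetilde G)\vee\mathcal D=\cC_{2n}$, and then invokes the Gratz--Zvonareva classification for $\cC_{2n}$ together with the partition of $\mathcal D$ to land in the purely combinatorial comparison above. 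The point needing most care is near the accumulation points: that a family of arcs with a complete $\mathscr M$-orbit is cofinal for crossings --- every indecomposable of $\ocC$ crosses one of them --- since this underlies both the single-block conclusion and the ``$G\oplus Y$ still generates'' step, and it is precisely where the combinatorics of the completion departs from that of $\cC_{2n}$.
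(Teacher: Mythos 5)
Your route is in substance the paper's own: reduce everything to the classification of thick subcategories via the hc decomposition (Theorem \ref{Thm:Lattices}, equivalently Lemma \ref{Prop:Mx in Mg}), then read off that $G$ generates exactly when its associated partition is the maximum of $eNNC_{2n}$; the paper's proof is a two-liner quoting Proposition \ref{Prop:Gen is HomCon} and Lemma \ref{Prop:Mx in Mg}. However, there is a genuine gap precisely at the step you flag as the delicate one, namely the claimed equivalence, for homologically connected $G$, between ``the endpoints of the arcs of $\lang[1]{G}$ exhaust $\sM$'' and ``$B$ is the block of $\hat{1}$''. The forward implication, which is the one needed for the sufficiency direction, is false. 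Take $n=2$ and let $G$ be a single long arc with one endpoint in each of the two segments: its suspensions sweep out every marked point of both segments, so $G$ is homologically connected (Proposition \ref{Prop:all X hom con}) and every point of $\sM$ occurs as an endpoint of an arc in $\lang[1]{G}$; but no suspension of a long (or short) arc ever has an endpoint in $L(\sM)$, so the block of $G$ consists of the two odd indices only, and by Lemma \ref{Prop:Mx in Mg} no limit or double limit arc lies in $\lang{G}$. Hence $G$ is not a classical generator, even though it satisfies your two conditions. The ``trapped marked point'' argument you sketch only addresses the reverse implication; it cannot produce the even-indexed entries of $B$, because a complete orbit of $\sM$ carries no information about whether the arcs of $\lang[1]{G}$ reach the accumulation points at all.

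The theorem is correct, and your argument closes up, once the orbit is taken in $\osM = \sM \cup L(\sM)$, as in Definition \ref{Def:Orbits} and the statement of Theorem \ref{Thm:GensOfCn} in the body of the paper (the $\sM$ in the introduction's version should be read as $\osM$). With that hypothesis the proof is exactly the paper's: Lemma \ref{Prop:Mx in Mg} says an indecomposable $X$ lies in $\lang{G}$ if and only if $\osM[X] \subseteq \osM[G_i]$ for some summand $G_i$ of the hc decomposition, so a generator must be homologically connected (one summand, Proposition \ref{Prop:Gen is HomCon}) with $\osM[G]=\osM$, and conversely these two conditions place every indecomposable, including the limit and double limit arcs, in $\lang{G}$. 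The ``cofinality of crossings near the accumulation points'' you worry about at the end is already discharged inside Lemma \ref{Lem:Objects and Partitions 2} and Theorem \ref{Thm:Lattices}; no further argument is required beyond correcting the orbit condition.
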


It is shown that the homological length of a classical generator provides an upper bound on the generation time of the classical generator.
We show that for all $n \geq 1$, there exists a classical generator that has homological length 1, and use this to show that $\ocC$ has a Rouquier dimension of 1.

Finally, we show that a classical generator of $\ocC$ must have a homological length of at most $2n-2$, and show that there will always be a classical generator with homological length $2n-2$.
We use this to prove our final main result, that the Orlov spectrum of $\ocC$ is bounded above by $2n-2$.

\begin{theorem}[Theorem \ref{Thm:Orlov}]
The Orlov spectrum of $\ocC$ for $n \geq 2$ is bounded above by $2n-2$.
That is
\[
\cO(\ocC) \subseteq \{1,\ldots,2n-2\}.
\]
\end{theorem}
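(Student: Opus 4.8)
The plan is to realise Theorem \ref{Thm:Orlov} as essentially a corollary of the structural results established above, by squeezing the generation time of an arbitrary classical generator of $\ocC$ between $1$ and $2n-2$. Recall that $\cO(\ocC)$ is, by definition, the set of generation times of the classical generators of $\ocC$, so it is enough to show that every classical generator $G$ of $\ocC$ (for $n \ge 2$) has generation time $\mathrm{gt}(G)$ with $1 \le \mathrm{gt}(G) \le 2n-2$.

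For the upper bound I would argue as follows. By Theorem \ref{Thm:GensOfCn} every classical generator $G$ of $\ocC$ is homologically connected, so its homological length $\mathrm{hl}(G)$ is defined (and finite, as shown above). The inequality relating generation time to homological length then gives $\mathrm{gt}(G) \le \mathrm{hl}(G)$, and the bound on the homological length of classical generators gives $\mathrm{hl}(G) \le 2n-2$; composing the two yields $\mathrm{gt}(G) \le 2n-2$. Since every classical generator satisfies this, $\cO(\ocC) \subseteq \{0,1,\ldots,2n-2\}$.

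It then remains only to rule out generation time $0$, i.e.\ to show $\lang[1]{G} \neq \ocC$ for every classical generator $G$. But this is immediate: the Rouquier dimension $\dim\ocC = 1$ is the infimum of the generation times of all classical generators, so no classical generator can have generation time $0$, i.e.\ $\mathrm{gt}(G) \ge 1$. (Concretely, $\lang[1]{G} = \mathrm{add}\{G\wb{i}\mid i\in\bZ\}$ is exactly the class of summands of finite direct sums of suspensions of $G$; as $\ocC$ is Hom-finite Krull--Schmidt and $G$ has only finitely many indecomposable summands, the indecomposables of $\lang[1]{G}$ lie in finitely many suspension-orbits, whereas the combinatorial model makes clear that $\ocC$ has infinitely many indecomposables up to suspension.) Together with the previous paragraph, and using $2n-2 \ge 2$ for $n \ge 2$, this gives $\cO(\ocC)\subseteq\{1,\ldots,2n-2\}$.

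I do not expect a serious obstacle in this final step: both inputs to the upper bound --- generation time $\le$ homological length, and homological length $\le 2n-2$ for classical generators --- are already available, and the exclusion of generation time $0$ is a short independent verification. The one point one must be careful about is that these two inputs apply to \emph{every} classical generator (which they do, by Theorem \ref{Thm:GensOfCn} and the universality of the homological-length estimate), so that no stray generator of larger generation time can contribute to $\cO(\ocC)$; the genuine difficulty of the theorem has been front-loaded into the homological-length bounds proved earlier.
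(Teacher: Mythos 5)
There is a genuine gap: the key input you cite as ``already available'' --- that every classical generator $G$ of $\ocC$ has homological length at most $2n-2$ --- is not established anywhere before Theorem \ref{Thm:Orlov}. Proposition \ref{Prop:1 to 2n-2} only produces, for each $1 \le d \le 2n-2$, \emph{some} generator of homological length $d$; it gives no upper bound on the homological length of an arbitrary generator. Proving that bound is precisely the content of the paper's proof of Theorem \ref{Thm:Orlov}: one first reduces to \emph{minimal} strong generators (using that $\langle G \rangle_i \subseteq \langle G \oplus F\rangle_i$, so adding summands cannot increase generation time), and then shows no minimal strong generator has homological length exceeding $2n-2$, via a graph-theoretic argument (arcs as edges on the $2n$ ``vertices'' given by segments and accumulation points), Lemma \ref{Lem:Loops aren't minimal} to discard loops, Corollary \ref{Cor:NoShortArcs} to exclude short-arc summands, and a case analysis on whether $M$ has $\ge 2n$ or $2n-1$ non-isomorphic indecomposable summands. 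Your proposal front-loads exactly this difficulty into a hypothesis and so does not prove the theorem.

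A secondary but related point: the inequality ``generation time $\le$ homological length'' (Lemma \ref{Lem:GenTime}) is stated and proved only for \emph{minimal} strong generators --- its proof runs through Lemma \ref{Lem:Reduce minimal}, which invokes Corollary \ref{Cor:NoShortArcs} and hence minimality --- so you cannot apply it directly to every classical generator as you do. Indeed, for non-minimal generators the homological length need not be bounded by $2n-2$ at all, which is why the correct route bounds generation times of arbitrary generators by those of minimal ones rather than by their own homological lengths. Your treatment of the lower bound (excluding generation time $0$ via the Rouquier dimension computation, Corollary \ref{Cor:Rdim}) is fine, but the upper bound as written assumes what must be proved.
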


Section \ref{Sec:Prelims} is an overview of Paquette-Y\i ld\i r\i m categories and classical generators of triangulated categories.
In Section \ref{Sec:HomCon} we introduce the notion of homologically connected objects, and study them in $\ocC$.
Section \ref{Sec:Thick} is where we classify the thick subcategories of $\ocC$ and construct an isomorphism of lattices to the lattice of thick subcategories.
Finally, in Section \ref{Sec:GensofCn} we determine the classical generators of $\ocC$, and compute the Rouquier dimension and provide an upper bound for the Orlov spectrum.

This work has significant overlap with the author's PhD thesis, in particular Sections \ref{Sec:HomCon} and \ref{Sec:GensofCn}.

\subsection*{Acknowledgements}

The author would like to thank Sira Gratz and Greg Stevenson for their support and advice, both in this work and the overlapping work in the author's PhD thesis.
Thanks also goes to David Pauksztello and Sofia Franchini for their interesting discussions that prompted the idea for this project. 

\section{Preliminaries}\label{Sec:Prelims}

\subsection{Paquette-Y\i ld\i r\i m Categories}

Let $S$ denote a unit circle with an anticlockwise orientation. Throughout we let $k$ be a field.

\begin{definition}\cite{Gratz2017}\label{Def:admissible}
    A subset $\sM \subset S$ is called \textit{admissible} if the following hold;
    \begin{itemize}
        \item $\sM$ is discrete, that is, there exists an open neighbourhood in $S$ for each element $x \in \sM$ containing no other element of $\sM$,
        \item $\sM$ has infinitely many elements,
        \item $\sM$ satisfies the two-sided limit condition, that is, every point $a \in S$ that is the limit of a sequence in $\sM$ is the limit of both an increasing and decreasing sequence in $\sM$ with respect to the cyclic order.
    \end{itemize}
\end{definition}

We say that any point $a \in S$ that is the limit of both an increasing and decreasing infinite sequence is an \textit{accumulation point}, and label the set of accumulation points by $L(\sM)$.
Note that by the discrete nature of $\sM$, then $\sM \cap L(\sM) = \emptyset$, and we say that $\osM := \sM \cup L(\sM)$.

We impose an ordering on $\osM$ by saying that for $x,y,z \in \osM$ we have
\[
x < y < z,
\]
if and only if when starting at $x$ and going in an anticlockwise direction, we first reach $y$ and then $z$.
It follows from Definition \ref{Def:admissible} that each element $x \in \sM$ has a unique \textit{predecessor} $x^- \in \sM$ and a unique \textit{successor} $x^+ \in \sM$, such that $(x^-,x) \cap \osM= \emptyset = (x,x^+) \cap \osM$.
Here $(u,v) \subset \osM$ denotes the subset of marked points $\{z \in \osM\}$ such that $u < z < v$.

The set $\osM$ is no longer discrete, as an open neighbourhood around $a \in L(\sM)$ will always contain an element of $\sM$, and so $a$ does not have a unique successor or predecessor.
To this, when discussing successors and predecessors in $\osM$, we fix $a^- = a^+ = a \in L(\sM) \subset \osM$.

\begin{definition}
    An \textit{arc} $\ell = \{x_1,x_2\}$ \textit{of} $\sM$ (resp.\ $\osM$) consists of a pair of distinct elements $x_1,x_2 \in \sM$ (resp.\ $x_1,x_2 \in \osM$) such that $x_2 \neq x_1,x_1^-,x_1^+$.
    We say that a pair of arcs $\ell = \{x_1,x_2\}$ and $\ell' = \{y_1,y_2\}$ \textit{cross} if 
    \[
    x_1 < y_1 < x_2 < y_2 < x_1 \quad \text{or} \quad x_1 < y_2 < x_2 < y_1 < x_1.
    \]
    The set of arcs of $\sM$ (resp.\ $\osM$) is denoted $\mathrm{arc}(\sM)$ (resp.\ $\mathrm{arc}(\osM)$).
\end{definition}

It follows from $\sM$ being a subset of $\osM$ that every arc of $\sM$ is also an arc of $\osM$.

We say that two arcs $\ell$ and $\ell'$, \textit{share an endpoint} if $\ell = \{x,y\}$ and $\ell' =\{x,z\}$.
There exists a map $\wb{1} : \mathrm{arc}(\sM) \rightarrow \mathrm{arc}(\sM)$ (resp.\ $\wb{1} : \mathrm{arc}(\osM) \rightarrow \mathrm{arc}(\osM)$) that takes an arc $\ell = \{x_1,x_2\}$ to its clockwise rotation, $\ell \wb{1} = \{ x_1^-,x_2^-\}$.
The map $\wb{1}$ has an inverse map $\wb{-1} : \mathrm{arc}(\sM) \rightarrow \mathrm{arc}(\sM)$ (resp.\ $\wb{-1} : \mathrm{arc}(\osM) \rightarrow \mathrm{arc}(\osM)$) that takes an arc to its anticlockwise rotation, $\ell \wb{-1} = \{ x_1^+,x_2^+\}$.

Let $\ell=\{x_1,x_2\}$ be an arc of $\sM$, then $\ell$ is one of the following types of arc;
\begin{itemize}
    \item $\ell$ is a \textit{short arc} if $x_1,x_2 \in \sM$ and $a < x_1 < x_2^- < a'$ for $a,a' \in L(\sM)$ such that $a < b < a'$ for $b \in L(\sM)$ implies $b=a$ or $b=a'$,
    \item $\ell$ is a \textit{long arc} if $x_1,x_2 \in \sM$ and $\ell$ is not a short arc.
\end{itemize}

Further, if $\ell'=\{x'_1,x'_2\}$ is an arc of $\osM$, then $\ell'$ is either a short arc, a long arc, or one of the two following types of arcs;
\begin{itemize}
    \item $\ell'$ is a \textit{limit arc} if $x'_1 \in \sM$ and $x'_2 \in L(\sM)$, up to relabelling,
    \item $\ell'$ is a \textit{double limit arc} if $x'_1,x'_2 \in L(\sM)$.
\end{itemize}

It is straightforward to verify that if $\ell$ is a double limit arc, then $\ell \wb{1} = \ell \wb{-1} = \ell$.
We note that in a case with a single accumulation point, all arcs are either short arcs or limit arcs.
At times, we refer to (double) limit arcs, this is taken to mean the collection of all limit arcs and double limit arcs.

\begin{figure}[h!]
    \centering
    \begin{tikzpicture}
        \draw (0,0) circle (3cm);
        \draw[thin] ({3*sin(315)},{3*cos(315)}) -- ({3*sin(135)},{3*cos(135)}) node[pos=0.5, above] {$\ell_4$};
        \draw[thin] ({3*sin(315)},{3*cos(315)}) .. controls(-0.8,-0.3).. ({3*sin(190)},{3*cos(190)}) node[pos=0.6, left] {$\ell_3$};
        \draw[thin] ({3*sin(258)},{3*cos(258)}) .. controls(-2.5,0.2).. ({3*sin(293)},{3*cos(293)}) node[pos=0.5, right] {$\ell_1$};
        \draw[thin] ({3*sin(23)},{3*cos(23)}) .. controls(1.6,0.8) .. ({3*sin(108)},{3*cos(108)}) node[pos=0.6, left] {$\ell_2$};
        \draw[fill=white] ({3*sin(45)},{3*cos(45)}) circle (0.1cm);
        \draw[fill=white] ({3*sin(135)},{3*cos(135)}) circle (0.1cm);
        \draw[fill=white] ({3*sin(225)},{3*cos(225)}) circle (0.1cm);
        \draw[fill=white] ({3*sin(315)},{3*cos(315)}) circle (0.1cm);
        \node at ({3.3*sin(315)},{3.3*cos(315)}) {\scriptsize $a$};
        \node at ({3.3*sin(135)},{3.3*cos(135)}) {\scriptsize $a'$};
        \node at ({3.2*sin(293)},{3.2*cos(293)}) {\scriptsize $x_1$};
        \node at ({3.2*sin(258)},{3.2*cos(258)}) {\scriptsize $x_2$};
        \node at ({3.2*sin(190)},{3.2*cos(190)}) {\scriptsize $x_3$};
        \node at ({3.2*sin(108)},{3.2*cos(108)}) {\scriptsize $x_4$};
        \node at ({3.2*sin(23)},{3.2*cos(23)}) {\scriptsize $x_5$};
    \end{tikzpicture}
    \caption{Examples of the four classes of arcs of $\osM$, where $\ell_1$ is a short arc, $\ell_2$ is a long arc, $\ell_3$ is a limit arc, and $\ell_4$ is a double limit arc. The small circles on the boundary represent the accumulation points in $\osM$.}
    \label{fig:arcs}
\end{figure}
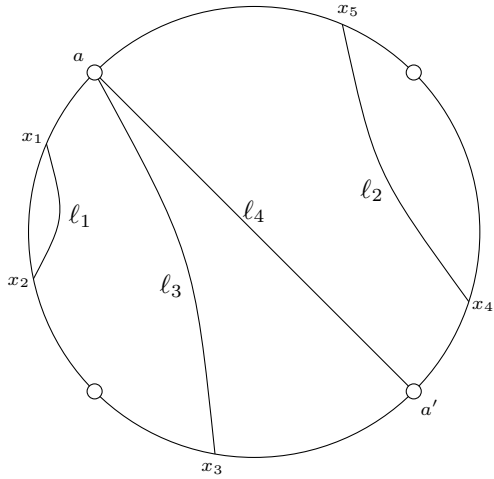

Given some admissible subset $\sM \subset S$, Igusa and Todorov \cite{Igusa2013} construct a cluster category $\cC(S,\sM)$ such that indecomposable objects are in bijection to the arcs of $\sM$, where an indecomposable object $X$ corresponds to an arc $\ell_X$.
For two indecomposables objects $X,Y \in \cC(S,\sM)$, 
\[
\Hom{X}{Y} \cong \begin{cases*}
    k & if $\ell_X, \ell_{Y\wb{-1}}$ cross,\\
    0 & else.
\end{cases*}
\]
The cluster category $\cC(S,\sM)$ is a 2-Calabi-Yau, $\mathrm{Hom}$-finite, $k$-linear, Krull-Schmidt triangulated category, with suspension functor $\wb{1}$, given by taking an indecomposable object $X$ to $X\wb{1}$, the indecomposable object that corresponds to the arc $\ell_{X\wb{1}}$.
We call $\cC(S,\sM)$ a \textit{discrete cluster category of Dynkin type} $A_{\infty}$ \textit{with respect to} $(S,\sM)$.

Similarly, we construct a cluster category $\mathcal{C}(S,\osM)$ from a marked surface $(S,\osM)$, where indecomposable objects are in bijection with the arcs of $\osM$.
Given two indecomposable objects $X,Y \in \mathcal{C}(S,\osM)$, then $\Hom{X}{Y} \cong k$ if and only if $\ell_X$ and $\ell_Y \wb{-1}$ cross, or $\ell_X$ and $\ell_Y \wb{-1}$ are both (double) limit arcs sharing an endpoint at an accumulation point, and $\ell_Y \wb{-1}$ is an anti-clockwise rotation of $\ell_X$ about their shared endpoint, else $\Hom{X}{Y}=0$.

For any two indecomposable objects $X$ and $Y$ in $\cC(S,\sM)$ (resp.\ $\cC(S,\osM)$) such that $\ell_X =\{x_0,x_1\}$ and $\ell_Y=\{y_0,y_1\}$ cross, then 
\begin{align*}
    X \rightarrow A \oplus B \rightarrow Y \rightarrow X \wb{1},\\
    Y \rightarrow C \oplus D \rightarrow X \rightarrow Y \wb{1},
\end{align*}
are both distinguished triangles in $\cC(S,\sM)$ (resp.\ $\cC(S,\osM)$), such that $\ell_A = \{x_0,y_1\}$, $\ell_B = \{y_0,x_1\}$, $\ell_C = \{x_0,y_0\}$ and $\ell_D=\{x_1,y_1\}$.

If $\ell_X = \{x,x_1\}$ and $\ell_Y = \{x,y_1\}$ are arcs with $x \in L(\sM)$, and $\ell_Y$ is an anticlockwise rotation of $\ell_X$, then 
\[
Y \rightarrow Z \rightarrow X \rightarrow Y \wb{1} 
\]
is a distinguished triangle, with $\ell_Z = \{x_1,y_1\}$.

\begin{figure}[H]
    \centering
    \begin{tikzpicture}
    \draw (0,0) circle (2cm);
    \draw[thin] ({2*sin(37)},{2*cos(37)}) .. controls(0,0) .. ({2*sin(196)},{2*cos(196)});
    \draw[thin] ({2*sin(281)},{2*cos(281)}) .. controls(0,0) .. ({2*sin(98)},{2*cos(98)});
    \draw[dotted, thin] ({2*sin(37)},{2*cos(37)}) ..controls(1.3,0.53).. ({2*sin(98)},{2*cos(98)});
    \draw[dotted, thin] ({2*sin(196)},{2*cos(196)}) ..controls(-1.2,-0.6).. ({2*sin(281)},{2*cos(281)});
    \draw[dashed, thin] ({2*sin(37)},{2*cos(37)}) ..controls(-0.2,0.8).. ({2*sin(281)},{2*cos(281)});
    \draw[dashed, thin] ({2*sin(196)},{2*cos(196)}) ..controls(0.5,-1.1).. ({2*sin(98)},{2*cos(98)});
    \node at ({2.3*sin(196)},{2.3*cos(196)}) {\footnotesize $x_1$};
    \node at ({2.3*sin(37)},{2.3*cos(37)}) {\footnotesize $x_0$};
    \node at ({2.3*sin(281)},{2.3*cos(281)}) {\footnotesize $y_0$};
    \node at ({2.3*sin(98)},{2.3*cos(98)}) {\footnotesize $y_1$};
    \node at (0.1,-0.8) {\footnotesize $\ell_X$};
    \node at (0.9,0.1) {\footnotesize $\ell_Y$};
    \node at (-0.3,1.2) {\footnotesize $\ell_C$};
    \node at (0.8,-1.2) {\footnotesize $\ell_D$};
    \node at (1.7,0.5) {\footnotesize $\ell_A$};
    \node at (-1.4,-0.8) {\footnotesize $\ell_B$};
    \end{tikzpicture}
    \caption{Arcs corresponding to the distinguished triangles induced by non-trivial $\Ext{1}{X}{Y} \cong \Ext{1}{Y}{X}$ groups.}
    \label{fig:ExTri}
\end{figure}
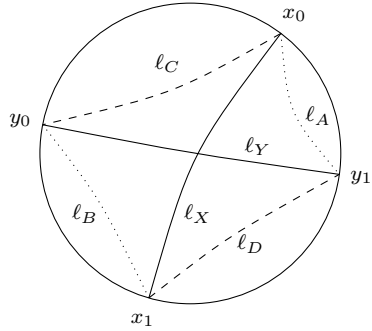

Furthermore, let $f:X \rightarrow Y$ be a morphism between indecomposable objects in either $\cC(S,\sM)$ or $\cC(S,\osM)$, where $\ell_X=\{x_0,x_1\}$ and $\ell_Y = \{y_0,y_1\}$, then $f$ factors through an indecomposable object $S$ if and only if
\[
x_0 \leq s_0 \leq y_0 < x_1, \, \text{and} \,\, x_1 \leq s_1 \leq y_1 < x_0,
\]
where $\ell_S = \{s_0,s_1\}$.

We call a category $\cC(S,\osM)$ the \textit{Paquette-Y\i ld\i r\i m completion of the discrete cluster category of Dynkin type} $A_{\infty}$ \textit{with respect to} $(S,\osM)$, or the Paquette-Y\i ld\i r\i m category of $(S,\osM)$ for short.
The Paquette-Y\i ld\i r\i m category of $(S,\osM)$ is a Krull-Schmidt, $\mathrm{Hom}$-finite, $k$-linear, triangulated category with suspension functor $\wb{1}$.
We identify the arc $\ell_{X \wb{1}}$, corresponding to the object $X \wb{1}$, with the arc $\ell_X \wb{1}$.

Given two admissible subsets of $S$, $\sM$ and $\mathscr{N}$, with an equal number of accumulation points, there exists an equivalence of triangulated categories $\cC(S,\osM) \xrightarrow{\sim}  \cC(S,\mathscr{N}^{cl})$, and so we may talk about a Paquette-Y\i ld\i r\i m category up to the number of accumulation points.
Therefore for all $n \geq 1$, we fix some admissible subset $\osM[n] \subset S$ with $n$ accumulation points, and say that $\cC(S,\osM[n]) = \ocC$.
The above equivalence also holds for $\cC(S,\sM)$ and $\cC(S,\mathscr{N})$, so we refer to the category $\cC(S,\sM_n) = \cC_n$.

Paquette-Y\i ld\i r\i m categories were introduced by Paquette and Y\i ld\i r\i m as a Verdier localisation of $\cC_{2n}$ in \cite{Paquette2020}.
It was shown by August, Cheung, Faber, Gratz, and Schroll in \cite{ACFGS} that $\ocC[1]$ is equivalent to the category of $\mathbb{Z}$-graded maximal Cohen-Macauley modules over $\mathbb{C}[x,y]/(x^2)$, with $x$ in degree 1 and $y$ in degree $-1$.

\begin{definition}\label{Def:Orbits}
Let $X$ be an indecomposable object in $\cC_n$ (resp.\ $\ocC$), and let $\sM_X \subseteq \sM$ (resp.\ $\osM[X] \subseteq \osM$) be the set of marked points that are the endpoints of the arcs corresponding to suspensions and desuspensions of $X$.

If $A \cong \bigoplus^l_{i=1} X_i$, with all $X_i$ indecomposable, then $\sM_A = \bigcup_{i=1}^l \sM_{X_i}$.
We call $\sM_A$ the \textit{orbit of} $A$ \textit{in} $\sM$ (resp.\ \textit{orbit of} $A$ \textit{in} $\osM$).
If we have $\sM_A=\sM$ (resp.\ $\osM[A] = \osM$), then we say $A$ has a \textit{complete orbit in} $\sM$ (resp.\ $A$ has a \textit{complete orbit in} $\osM$).
\end{definition}

There are times, particularly when classifying thick subcategories, where it will be useful to think of $\ocC$ as a Verdier localisation of $\cC_{2n}$ rather than the direct combinatorial model described above.
Therefore we briefly go over that construction here, see \cite{Paquette2020} for full details.

\begin{construction}\label{Cons:Verdier Localisation}
Let $\cC_{2n}$ be the discrete cluster category of Dynkin type $A_{\infty}$ with $2n$ accumulation points, labelled $a_1,\ldots,a_{2n}$ with cyclic ordering $a_i< a_{i+1} < a_i$.
Let $\cD \subset \cC_{2n}$ be the subcategory defined by taking short arcs with endpoints in $\bigcup_{i=1}^{n} (a_{2i},a_{2i+1})$, and let $\Omega$ be the set of morphisms in $\cC_{2n}$ such that $f \in \Omega$ if and only if $\mathrm{cone}(f) \in \cD$.
The subcategory $\cD$ is a thick subcategory of $\cC_{2n}$ \cite{Paquette2020}, and is equivalent to the disjoint union of $n$ copies of $\cC_1$ \cite{Murphy22}.

Then
\[
\pi \colon \cC_{2n}[\Omega^{-1}] \xrightarrow{\sim} \ocC,
\]
that is, $\ocC$ is equivalent to the Verdier localisation of $\cC_{2n}$ with respect to the subcategory $\cD$.
\end{construction}

\begin{figure}[H]
\centering
    \begin{tikzpicture}
        \draw[fill=black, opacity=0.2] (-3.5,0) circle (2.5cm);
        \path[fill=white] (0,-1) -- (-2.5,-3.5) -- (-6,0) -- (-6,3) -- (-4.5,3.5) -- cycle;
        \draw (-3.5,0) circle (2.5cm);
        \draw (3.5,0) circle (2.5cm);
        \draw[fill=white] ({-3.5+2.5*sin(0)},{2.5*cos(0)}) circle (0.1cm);
        \draw[fill=white] ({-3.5+2.5*sin(90)},{2.5*cos(90)}) circle (0.1cm);
        \draw[fill=white] ({-3.5+2.5*sin(180)},{2.5*cos(180)}) circle (0.1cm);
        \draw[fill=white] ({-3.5+2.5*sin(270)},{2.5*cos(270)}) circle (0.1cm);
        \draw[fill=white] ({3.5+2.5*sin(45)},{2.5*cos(45)}) circle (0.1cm);
        \draw[fill=white] ({3.5+2.5*sin(225)},{2.5*cos(225)}) circle (0.1cm);
        \draw[very thin] ({-3.5+2.5*sin(45)},{2.5*cos(45)}) -- ({-3.5+2.5*sin(225)},{2.5*cos(225)});
        \draw[very thin] ({-3.5+2.5*sin(158)},{2.5*cos(158)}) -- ({-3.5+2.5*sin(338)},{2.5*cos(338)});
        \draw[very thin] ({-3.5+2.5*sin(294)},{2.5*cos(294)}) .. controls(-3.8,1.5) .. ({-3.5+2.5*sin(35)},{2.5*cos(35)});
        \draw[very thin] ({-3.5+2.5*sin(190)},{2.5*cos(190)}) .. controls (-4.3,-2) .. ({-3.5+2.5*sin(218)},{2.5*cos(218)});
        \draw[very thin] ({-3.5+2.5*sin(110)},{2.5*cos(110)}) .. controls (-1.8,-1.3) .. ({-3.5+2.5*sin(145)},{2.5*cos(145)});
        \draw[very thin] ({3.5+2.4*sin(45)},{2.4*cos(45)}) -- ({3.5+2.4*sin(225)},{2.4*cos(225)});
        \draw[very thin] ({3.5+2.5*sin(158)},{2.5*cos(158)}) -- ({3.5+2.5*sin(338)},{2.5*cos(338)});
        \draw[very thin] ({3.5+2.5*sin(110)},{2.5*cos(110)}) .. controls (5.2,-1.3) .. ({3.5+2.5*sin(145)},{2.5*cos(145)});
        \draw[very thin] ({3.5+2.5*sin(294)},{2.5*cos(294)}) .. controls(3.2,1.3) .. ({3.5+2.4*sin(45)},{2.4*cos(45)});
        \draw[->>] (-0.75,0) -- (0.75,0) node[pos=0.5, above] {$\pi$};
    \end{tikzpicture}
    \caption{A combinatorial model of the localisation $\pi:\cC_{2n} \rightarrow \ocC$, where an indecomposable object is in $\cD$ if and only if the corresponding arc is entirely in one of the shaded areas, up to isotopy.}
    \label{fig:enter-label}
\end{figure}
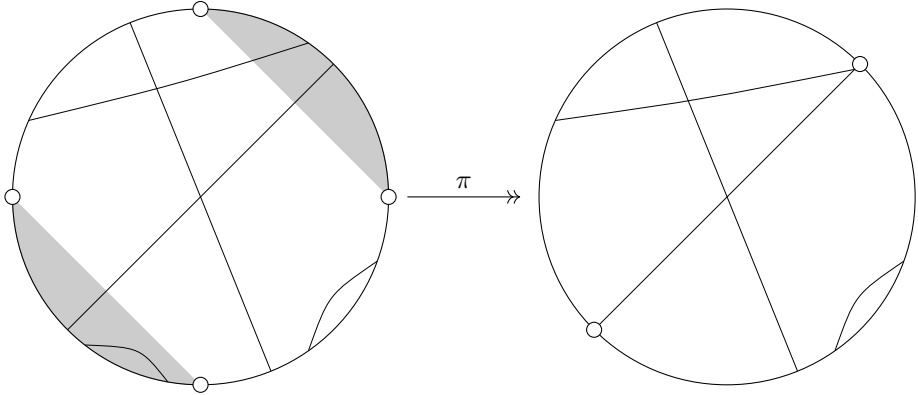

\subsection{Classical Generators}\label{Subsec:ClassGens}

Objects that generate a triangulated category have been the subject of study for many years, notably being used to construct equivalences of categories by Keller \cite{KellerDeriving} for algebraic triangulated categories.
Here we recall some definitions about classical generators, many of these are adapted versions of the definitions given in \cite{BvdBGenerators} by Bondal and Van den Bergh.

\begin{definition}\label{Def:thick subcat}
Let $\cT$ be a triangulated category with a triangulated subcategory $\cB$, then we say $\cB$ is a \textit{thick subcategory} of $\cT$ if it is closed under direct summands.
\end{definition}

\begin{definition}\label{Def:Gen}
Let $G$ be an object in $\cT$, then we say that $E$ \textit{classically generates} $\cT$ if the smallest thick subcategory of $\cT$ containing $G$ is $\cT$ itself.
Such an object $G$ is called a \textit{classical generator}.
\end{definition}

The term \textit{classical generator} is used to distinguish between this and other notions of generators, for instance a \textit{(weak) generator} $G$ in a triangulated category is an object such that for all objects $K$, there exists a non-zero morphism $G \rightarrow K \wlb n \wrb$ for some $n \in \mathbb{Z}$.
It should be noted that all classical generators are weak generators, however the converse is not always true.
Whenever we refer to generators, we shall always be referring to classical generators.

\begin{definition}\label{Def:StarProd}
Let $\mathcal{R,S} \subset \cT$ be two subcategories, then $\cR \star \cS$ is the full subcategory of direct summands of all objects $Y$ such that there exists a triangle 
\[
X \rightarrow Y \rightarrow Z \rightarrow X\wlb 1 \wrb
\]
with $X \in \cR$ and $Z \in \cS$.
\end{definition}

It can be directly verified by the Octahedral axiom that the operation $\star$ is associative.
 
Given some subcategory $\cR \subseteq \cT$, it is possible to build an iterative series of subcategories from $\cR$ using the operation $\star$.
These subcategories prove to be a useful tool in determining whether or not a collection of objects classically generate $\cT$ or not.

\begin{definition}\label{Def:langle}
Let $G$ be an object in $\cT$.
We denote by $\langle G \rangle_1 \subset \cT$ the full subcategory consisting of all direct summands of finite coproducts of suspensions of objects in $G$.
Further, we define the full subcategory $\langle G \rangle_{n+1}$ to be
\[
\langle G \rangle_{n+1} := \langle G \rangle_n \star \langle G \rangle_1.
\]
Moreover, we define $\langle G \rangle$ to be the union of all $\langle G \rangle_n$, i.e.\
\[
\langle G \rangle := \; \bigcup_n \; \langle G \rangle_n. 
\]
\end{definition}

Associativity of the operation $\star$ may be used to show that $\lang[a]{G} \star \lang[b]{G} = \lang[c]{G}$ such that $a+b = c$.

For an object $G \in \cT$, the subcategory $\lang{G}$ is the smallest thick subcategory of $\cT$ that contains $G$, that is $G$ is a classical generator if and only if $\lang{G}=\cT$.
Further, if there exists $n \in \mathbb{Z}$, such that $\langle G \rangle_n = \cT$, then we call $G$ a \textit{strong generator of} $\cT$ and say that $G$ generates $\cT$ in $n$ steps.
If some $G \in \cT$ is a strong generator, then all classical generators of $\cT$ are also strong generators.

We shall use the following definition throughout the next few sections, as it is an important requirement used in the proofs of some of the results stated.

\begin{definition}
Let $G=\bigoplus_{j \in J} G_j$ be a classical (resp.\ strong) generator of some triangulated category $\cT$.
We say that $G$ is a \textit{minimal classical (resp.\ strong) generator} of $\cT$ if there exists no classical (resp.\ strong) generator, $G'$, such that $G \cong G' \oplus G_j$ for some $j \in J$. 
\end{definition}

If a minimal strong generator $E \in \cT$ generates $\cT$ in $n$ steps, then a strong generator $E' \cong E \oplus F$, for some $F \in \cT$, must generate $\cT$ in $\leq n$ steps, as $\langle E \rangle_i \subseteq \langle E' \rangle_i$ for all $i \geq 0$, and $\langle E \rangle_n = \cT$.

\section{Homologically Connected Objects}\label{Sec:HomCon}

We say that a morphism $f \in \Ext{i}{X}{Y}$ is a \textit{morphism of degree} $i$.
Let $f \in \Ext{1}{X}{Y}$ be a morphism of degree 1, then we say that an object $Z$ in a distinguished triangle 
\[
Y \rightarrow Z \rightarrow X \xrightarrow{f} Y\wlb 1 \wrb,
\]
is an \textit{extension of} $X$ by $Y$.

\begin{definition}
Let $\cT$ be a $\mathrm{Hom}$-finite, Krull-Schmidt triangulated category.
Let $G= \bigoplus_{i=1}^m G_i$ be an object in $\cT$.
Then we say $G$ is \textit{homologically connected} if for any two indecomposable objects $F_1$ and $F_{l+1}$ in $\langle G \rangle_1$, then there is some finite set of non-zero morphisms of degree 1 with a non-trivial extension, $f_1,\ldots,f_l$, between indecomposable objects in $\langle G \rangle_1$ that form a sequence between $F_1$ and $F_{l+1}$;
\[
\begin{tikzcd}
F_1 \arrow[r,dash,"f_1"] & F_2 \arrow[r,dash,dashed] & F_l \arrow[r,dash,"f_l"] & F_{l+1}.
\end{tikzcd}
\]
We call these sequences a \textit{zig-zag from} $F_1$ \textit{to} $F_{l+1}$.
\end{definition}

The direction of the morphisms and their composition is not required, it is sufficient to know that there is a zig-zag between all indecomposable direct summands of $G$.
We say that a zig-zag between two indecomposable objects has \textit{length} $l$ if there are $l$ morphisms of degree 1 in the zig-zag, and a zig-zag in \textit{minimal} if it has the smallest length of all zig-zags between the same objects.
We fix it so that a minimal zig-zag between two isomorphic objects has length zero.
The \textit{homological length} of a homologically connected object $G$ is the supremum of the length of all minimal zig-zags between any two indecomposable direct summands of $G$.
If no supremum exists, then we say that the homological length is $\infty$.

\begin{lemma}\label{Lem:decomposition}
Let $\cT$ be a $\mathrm{Hom}$-finite, Krull-Schmidt triangulated category such that each indecomposable object is homologically connected, and let $G$ be an object in $\cT$.
    Then there exists a decomposition of $G \cong \oplus_{i=1}^l X_i$, such that $X_a$ is homologically connected for all $a=1, \ldots, l$, and $X_a \oplus X_b$ is not homologically connected for $a \neq b$.

    Moreover, this decomposition is unique up to relabelling.
\end{lemma}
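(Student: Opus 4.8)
The plan is to first establish existence of such a decomposition, and then prove uniqueness. For existence, start from the Krull–Schmidt decomposition $G \cong \bigoplus_{j=1}^m G_j$ into indecomposables. By hypothesis each $G_j$ is homologically connected. Now define a graph $\Gamma$ on the vertex set $\{1,\dots,m\}$ by declaring $j \sim j'$ whenever $G_j \oplus G_{j'}$ is homologically connected. I would then let $X_1,\dots,X_l$ be the direct sums of the $G_j$ over the connected components of $\Gamma$. The substantive point is that each $X_a$ is itself homologically connected: given two indecomposable summands $F,F'$ of $X_a$ (more generally, two indecomposables of $\langle X_a\rangle_1$, which are suspensions of the $G_j$'s in that component), a path in $\Gamma$ between the corresponding vertices gives a chain $G_j = H_0, H_1, \dots, H_r = G_{j'}$ with each $H_t \oplus H_{t+1}$ homologically connected; concatenating a zig-zag from $H_t$ to $H_{t+1}$ (which exists inside $\langle H_t \oplus H_{t+1}\rangle_1 \subseteq \langle X_a\rangle_1$, using that $\langle G\rangle_1$ is closed under suspension so the relevant morphisms stay in $\langle X_a\rangle_1$) yields a zig-zag from $F$ to $F'$ in $\langle X_a\rangle_1$. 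One must check the degenerate cases (an indecomposable object is trivially homologically connected, and a zig-zag between isomorphic objects has length zero) so that singleton components behave correctly. By construction, if $a \neq b$ then $X_a \oplus X_b$ is not homologically connected: otherwise there would be a zig-zag joining a summand of $X_a$ to a summand of $X_b$, but a zig-zag only passes through indecomposables of $\langle X_a \oplus X_b\rangle_1$, and I would argue such a zig-zag cannot exist because it would force an edge of $\Gamma$ crossing between the two components. This last implication is the main obstacle: I need to know that the existence of a zig-zag between $G_j$ and $G_{j'}$ forces $G_j \oplus G_{j'}$ to be homologically connected, which is essentially the statement that "being joined by a zig-zag" is a transitive relation on indecomposables — this should follow because any zig-zag lies in $\langle G_j \oplus G_{j'}\rangle_1$ once we know the intermediate indecomposables are suspensions of $G_j$ or $G_{j'}$; if the intermediate objects could be something else, the argument needs the hypothesis that every indecomposable of $\cT$ is homologically connected to re-route, so I would look carefully at exactly which indecomposables a zig-zag between summands of $G$ is allowed to use.

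For uniqueness, suppose $G \cong \bigoplus_{a=1}^l X_a \cong \bigoplus_{b=1}^{l'} Y_b$ are two such decompositions. By Krull–Schmidt, each $X_a$ and each $Y_b$ is a direct sum of a subcollection of the fixed indecomposables $G_1,\dots,G_m$ (with multiplicities matching), and these subcollections partition $\{1,\dots,m\}$ in two ways. Since each $X_a$ is homologically connected, all the vertices it contains lie in a single connected component of $\Gamma$; since any two distinct $X_a$'s are not homologically connected, they cannot share a component, so the $X_a$ partition refines the component partition of $\Gamma$ — but also, because $X_a$ is homologically connected, if two vertices $j,j'$ in the same component of $\Gamma$ landed in different parts $X_a, X_{a'}$ we would contradict... actually the cleanest route: show directly that the parts of \emph{any} valid decomposition are exactly the connected components of $\Gamma$. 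Indeed homological connectivity of $X_a$ forces $X_a$ to be contained in a single component, and non-connectivity of $X_a \oplus X_{a'}$ forces distinct parts into distinct components, so the parts inject into components; conversely each component is covered, and if a component were split between two parts $X_a, X_{a'}$ then since the component is connected there is a $\Gamma$-edge $j \sim j'$ with $j$ in $X_a$, $j'$ in $X_{a'}$, i.e. $G_j \oplus G_{j'}$ homologically connected, hence (as $G_j \oplus G_{j'}$ is a summand of $X_a \oplus X_{a'}$ and homological connectivity is inherited by… no — I'd instead note the zig-zag realizing connectivity of $G_j\oplus G_{j'}$ is a zig-zag inside $\langle X_a \oplus X_{a'}\rangle_1$, contradicting that $X_a \oplus X_{a'}$ is not homologically connected). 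Therefore every valid decomposition has parts equal to the components of $\Gamma$, which gives uniqueness up to relabelling. I expect the only delicate points to be the bookkeeping around which ambient subcategory a given zig-zag lives in, and making sure the degenerate/length-zero conventions make singleton components and isomorphic objects behave as required.
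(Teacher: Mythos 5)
Your proposal is correct and follows essentially the same route as the paper: form the graph on the indecomposable summands of $G$ with edges given by pairwise homological connectedness, take the sums over connected components, verify connectedness of each part by concatenating zig-zags along graph paths and non-connectedness across components because any zig-zag in $\langle X_a \oplus X_b\rangle_1$ would force a degree-one morphism (hence an edge) between the two components, with uniqueness from the Krull--Schmidt property and the graph. The delicate points you flag (which ambient subcategory a zig-zag lives in, and that a single cross morphism plus homological connectedness of each indecomposable yields an edge) are resolved exactly as you suggest; the paper simply states these steps more tersely.
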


\begin{proof}
    Suppose that $G \cong \bigoplus_{i=1}^m G_i$ where $G_i$ are all indecomposable.
    We construct a graph $Q$ with $m$ vertices, with an edge between $i$ and $j$ if $G_i \oplus G_j$ is homologically connected.
    Then we either have a set of disjoint graphs, or the graph is connected.
    We label the connected subgraphs $Q_1,\ldots,Q_l$.
    
    For a connected subgraph $Q_a$ with vertices $\{a_1, \ldots, a_p\}$, let $X_a \cong \bigoplus_{i=1}^p G_{a_i}$.
    Then $X_a$ is homologically connected as each indecomposable direct summand is homologically connected by assumption, and an edge between $a_i$ and $a_j$ represents a zig-zag between $G_{a_i}$ and $G_{a_j}$ in $\langle X_a \rangle_1$.
    Hence there exists a zig-zag between any two indecomposable objects in $\langle X_a \rangle_1$, meaning $X_a$ is homologically connected.

    As there are no morphisms of degree 1 between the indecomposable objects corresponding to vertices in disjoint subgraphs, then each $X_a$ and $X_b$, for $a \neq b$, have no morphisms of degree 1 between them, and so $X_a \oplus X_b$ is not homologically connected.
    Therefore we have
    \[
    G \cong X_1 \oplus \cdots \oplus X_l,
    \]
    such that each $X_a$ is homologically connected and $X_a \oplus X_b$ is not homologically connected for any $a \neq b$.

    Uniqueness follows from $\cT$ being Krull-Schmidt, and the construction of the graph $Q$.
\end{proof}

We call such a decomposition of an object $X$ the \textit{hc (=homologically connected) decomposition of} $X$.

\begin{remark}
    Any $\mathrm{Hom}$-finite, Krull-Schmidt, 2-Calabi-Yau triangulated category $\cT$, such that $\wb{2}$ is not isomorphic to the identity on indecomposable objects, satisfies the requirement that each indecomposable object is homologically connected.
    This is as there will always be a morphism of degree 1, $f \in \Ext{1}{X}{X\wb{1}}$, that has a non-trivial extension for any indecomposable object $X \in \cT$.
\end{remark}

\subsection{Homologically Connected Objects in \texorpdfstring{$\ocC$}{Cn}}\label{Subsec:HomCon}

The following proposition shows us how to reduce the length of a zig-zag by using the extensions of morphisms of degree 1 in the zig-zag.

\begin{lemma}\label{Lem:ZigZagReduction}
Let $X,Y,Z \in \ocC$ be indecomposable objects such that
\[
\begin{tikzcd}
X \arrow[r,dash] & Y \arrow[r,dash] & Z
\end{tikzcd}
\]
is a minimal zig-zag between $X,Z \in \langle G \rangle$, for a homologically connected object $G \in \ocC$.
Let $X \in \langle G \rangle_a$ and $Y \in \langle G \rangle_b$, such that $Y \not\cong X \wlb \pm 1 \wrb$ if $\ell_X$ is a limit arc.
Then there exists an indecomposable object $A \in \langle G \rangle_{a+b}$ that is a direct summand of an extension of $X$ by $Y$, or a direct summand of an extension of $Y$ by $X$, such that there exists a minimal zig-zag
\[
\begin{tikzcd}
    A \arrow[r,dash] & Z.
\end{tikzcd}
\]
If $\ell_X$ is a limit arc and $Y \cong X \wlb 1 \wrb$ (resp.\ $Y \cong X \wlb -1 \wrb$), then such an $A$ exists as a direct summand of an extension of $Y \wlb 1 \wrb$ by $X$ (resp.\ $A$ is a direct summand of an extension of $X$ by $Y \wlb -1 \wrb$). 
\end{lemma}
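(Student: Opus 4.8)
The plan is to argue entirely in the arc model: translate the two edges of the zig-zag into crossing (or shared-limit-endpoint) conditions on arcs, replace the pair $X,Y$ at the front of the zig-zag by a single indecomposable summand of an explicit extension relating them, and check by an inspection of the circle that this summand still meets $Z$. First I would set up the dictionary. Since $X\mathrel{-}Y$ and $Y\mathrel{-}Z$ are edges of a zig-zag, there are non-zero degree-$1$ morphisms with non-trivial extensions between $X$ and $Y$ and between $Y$ and $Z$; by the description of morphism spaces in $\ocC$ recalled in Section~\ref{Sec:Prelims}, this means $\ell_X$ and $\ell_Y$ either cross, or are (double) limit arcs sharing an accumulation point with one a non-trivial rotation of the other about it, and likewise for $\ell_Y$ and $\ell_Z$. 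I would then record the explicit extensions from Section~\ref{Sec:Prelims}: if $\ell_X=\{x_0,x_1\}$ and $\ell_Y=\{y_0,y_1\}$ cross, the extension of $Y$ by $X$ is $A\oplus B$ with $\ell_A=\{x_0,y_1\}$, $\ell_B=\{y_0,x_1\}$, and the extension of $X$ by $Y$ is $C\oplus D$ with $\ell_C=\{x_0,y_0\}$, $\ell_D=\{x_1,y_1\}$; if instead $\ell_X,\ell_Y$ are limit arcs through an accumulation point with $\ell_Y$ an anticlockwise rotation of $\ell_X$, the relevant extension is the single indecomposable whose arc joins their two non-accumulation endpoints. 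Each of these indecomposable summands is, by construction, a summand of an extension of $X$ by $Y$ or of $Y$ by $X$; and since its defining triangle has its outer terms in $\langle X\rangle_1\subseteq\langle G\rangle_a$ and $\langle Y\rangle_1\subseteq\langle G\rangle_b$, it lies in $\langle G\rangle_a\star\langle G\rangle_b=\langle G\rangle_{a+b}$ by associativity of $\star$. So whichever summand I end up selecting automatically satisfies the membership and ``summand of an extension'' parts of the conclusion.

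The combinatorial heart is then the claim that at least one of these indecomposable extension summands is non-zero and its arc crosses $\ell_Z$ (or, in the limit case, stands in the required limit relation to $\ell_Z$). To prove it I would place the endpoints on $S$: after labelling the endpoints of $\ell_X,\ell_Y$ so that $\ell_A,\ell_B,\ell_C,\ell_D$ are as recorded above, $\ell_Z$ crossing $\ell_Y$ puts one endpoint $z_0$ of $\ell_Z$ strictly between $y_0$ and $y_1$ on the side containing $x_0$, and the other endpoint $z_1$ on the side containing $x_1$. A short case analysis on whether $z_0$ lies in $(y_0,x_0)$, equals $x_0$, or lies in $(x_0,y_1)$ — and, in the middle case, on where $z_1$ lies relative to $x_1$ — shows that in every case one of $\{x_0,y_0\}$, $\{x_0,y_1\}$, $\{y_0,x_1\}$, $\{x_1,y_1\}$ has $z_0$ and $z_1$ on opposite sides of it, hence crosses $\ell_Z$; along the way one checks that this pair of marked points is a genuine (hence non-zero) arc, as they are distinct and non-adjacent precisely because $\ell_Z$ has an endpoint strictly between them. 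Call the corresponding indecomposable $A$. Since $\ell_A$ crosses $\ell_Z$ there is a non-zero degree-$1$ morphism between $A$ and $Z$ in one direction or the other, with a non-trivial extension, so there is a zig-zag $A\mathrel{-}Z$ of length at most $1$; it is minimal, because a length-$0$ zig-zag is only possible between isomorphic objects. Combined with the previous paragraph, this gives the required $A$.

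What remains is the limit-arc bookkeeping, which I expect to be the main obstacle. When one or more of $\ell_X,\ell_Y,\ell_Z$ is a limit or double-limit arc, the relevant extension triangle has a different shape and the ``crossing'' of $\ell_Y$ with $\ell_Z$ may instead be a shared accumulation-point endpoint; in each sub-case the same endpoint-chasing goes through once the correct extension triangle is substituted, but one must verify that no configuration of the six marked points — coincidences, adjacencies, or several of the arcs being limit arcs simultaneously — kills every candidate summand at once or severs its connection to $Z$. The one genuinely special sub-case is the one flagged in the statement, $\ell_X$ a limit arc with $Y\cong X\wlb 1\wrb$ (resp.\ $X\wlb -1\wrb$), where the naive extension of $X$ by $Y$ (or of $Y$ by $X$) degenerates; there one works instead with an extension of $Y\wlb 1\wrb$ by $X$ (resp.\ of $X$ by $Y\wlb -1\wrb$), whose arc is non-degenerate, which still lies in $\langle G\rangle_{a+b}$ since $Y\wlb\pm 1\wrb\in\langle G\rangle_b$, and for which the endpoint analysis is re-run using that $\ell_Z$ crosses or is limit-related to $\ell_Y=\ell_X\wlb\pm 1\wrb$ to conclude that it crosses $\ell_Z$ and yields the minimal zig-zag $A\mathrel{-}Z$.
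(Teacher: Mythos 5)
Your overall strategy is the same as the paper's: write down the arcs of the indecomposable summands of the extension triangles relating $X$ and $Y$, observe that they lie in $\lang[a]{G}\star\lang[b]{G}=\lang[a+b]{G}$, and chase endpoints on the circle to produce a summand with a degree-$1$ morphism to or from $Z$. Your treatment of the generic case, where $\ell_X,\ell_Y$ and $\ell_Y,\ell_Z$ all cross, is correct, and by allowing summands of both extensions (all four candidate arcs) you make that case almost independent of the minimality hypothesis; the paper instead uses one triangle and invokes minimality to pin down $\ell_Z$. Do note, though, that even in your version minimality is needed to exclude the coincidence $\ell_Z=\ell_X$ (your case $z_0=x_0$, $z_1=x_1$), which your case analysis passes over.

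The genuine gap is in the cases you defer as ``limit-arc bookkeeping'', which constitute roughly half of the paper's proof. When the degree-$1$ morphism between $X$ and $Y$ comes from a shared endpoint at an accumulation point (and $Y\not\cong X\wlb \pm 1\wrb$), there is only \emph{one} candidate summand, the arc $\{x_1,y_1\}$ joining the non-accumulation endpoints, and your plan of simply re-running the endpoint analysis against $\ell_Y$ does not suffice. Concretely, with $\ell_X=\{x,x_1\}$, $\ell_Y=\{x,y_1\}$, $x\in L(\sM)$ and $x<x_1<y_1<x$, an arc $\ell_Z=\{z_1,z_2\}$ with $z_1\in(x,x_1)$ and $z_2\in(y_1,x)$ crosses $\ell_Y$ but has both endpoints on one side of $\{x_1,y_1\}$, so the candidate has no degree-$1$ morphism to or from $Z$. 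The only reason this configuration cannot occur is that such an $\ell_Z$ crosses $\ell_X$, which is forbidden by minimality of the original zig-zag; this use of the hypothesis is exactly how the paper argues and is absent from your sketch. The same issue recurs in the flagged sub-case $Y\cong X\wlb 1\wrb$: after passing to the extension of $Y\wlb 1\wrb$ by $X$, it is again minimality that forces $\ell_Z$ either to cross $\ell_{Y\wlb 1\wrb}$ or to equal $\{x_1,x_1^{--}\}$, and in the latter exceptional case the candidate does not cross $\ell_Z$ but is isomorphic to it (the paper records this explicitly), so your claim that the re-run analysis always yields a crossing is not quite accurate. No new idea is needed to repair this -- the hypotheses you already have suffice -- but as written these cases are asserted rather than proved.
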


\begin{proof}
We have two cases to consider, where $\ell_X$ and $\ell_Y$ cross, and where they share an endpoint.
First, we consider when $\ell_X$ and $\ell_Y$ cross.

Let $\ell_X =\{x_1,x_2\},\ell_Y =\{y_1,y_2\}$ and $\ell_Z=\{z_1,z_2\}$, where $x_1 < y_1 < x_2 < y_2 < x_1$.
Let $f \in \Ext{1}{X}{Y}$, then there exists a triangle
\[
\begin{tikzcd}
Y \arrow[r] & A \oplus A' \arrow[r] & X \arrow[r] & Y\wlb 1 \wrb
\end{tikzcd}
\]
where $\ell_{A \oplus A'} = \ell_A \oplus \ell_{A'} =\{x_1,y_1\} \oplus \{x_2,y_2\}$, and $A,A' \in \langle G \rangle_{a+b}$ as $X \in \langle G \rangle_a$ and $Y \in \langle G \rangle_b$.
As there is a non-zero morphism of degree 1 between $Y$ and $Z$ in at least one direction, then either $\ell_Y$ and $\ell_Z$ cross, or they share an endpoint.
If $\ell_Z$ and $\ell_Y$ share an endpoint, then $\ell_Z$ and $\ell_A$ share an endpoint (or $\ell_Z$ and $\ell_{A'}$ share an endpoint), and therefore there is a morphism of degree 1 in some direction between $Z$ and $A$ (or between $Z$ and $A'$).

Suppose $\ell_Z$ and $\ell_Y$ cross, and further suppose $y_1 < z_1 < y_2 < z_2 < y_1$.
However, we know $\ell_X$ and $\ell_Z$ cannot cross or share an endpoint as there are no morphisms of degree 1 between $X$ and $Z$ by minimality of the zig-zag.
Therefore one of the following holds
\begin{align*}
    y_1 < x_2 \leq z_1 < y_2 < z_2 \leq x_1 < y_1,\\
     y_1 < z_2 \leq x_2 < y_2 < x_1 \leq z_1 < y_1,
\end{align*}
and so $\ell_{A'}$ and $\ell_Z$ cross and thus there is a morphism of degree 1 between $A'$ and $Z$.

Now suppose $\ell_X = \{x,x_1\}$ and $\ell_Y = \{x,y_1\}$ share an endpoint at an accumulation point.
We consider two cases, one where $Y \not\cong X \wlb \pm 1 \wrb$, and the second case where $Y \cong X \wlb 1 \wrb$ (or, $Y \cong X \wlb -1 \wrb$).
For the first case, let $x < x_1 < y_1^- < x$, so we get the triangle 
\[
X \rightarrow A \rightarrow Y \rightarrow X\wlb 1 \wrb
\]
where $\ell_A=\{x_1,y_1\}$.
Let $\ell_Z = \{z_1,z_2\}$, as there is a morphism of degree 1 between $Y$ and $Z$, but no morphisms of degree 1 between $X$ and $Z$, then either $x_1 < z_1 < y_1 < z_2 < x < x_1$, or one of the following holds and $y_1$ is an accumulation point,
\begin{align*}
    x_1 < z_1 <y_1 = z_2 < x < x_1,\\
     x_1 < y_1= z_1 < z_2 < x < x_1.
\end{align*}
Therefore $\ell_A$ either shares an endpoint with $\ell_Z$ at an accumulation point, or they cross, and so there is a morphism of degree 1 between $A$ and $Z$.
Further, if $X \in \langle G \rangle_a$ and $Y \in \langle G \rangle_b$, then $A \in \langle G \rangle_{a+b}$.
A similar argument holds when we consider $x < y_1 < x_1^- < x$.

Next we focus on the second case where $Y \cong X \wlb 1 \wrb$ (resp.\ $Y \cong X \wlb -1 \wrb$).
There exists a non-zero morphism of degree 1 between $X$ and $Y \wlb 1 \wrb$ (resp.\ between $X$ and $Y \wlb -1 \wrb$) by \cite{Paquette2020}.
Any $\ell_Z$ that crosses $\ell_Y$ but not $\ell_X$ either crosses $\ell_{Y \wlb 1 \wrb}$ (resp.\ crosses $\ell_{Y \wlb -1 \wrb}$), or $\ell_Z = \{x_1,x_1^{--}\}$ (resp.\ $\ell_Z = \{x_1,x_1^{++}\}$), and in this case $Z$ is isomorphic to $A$.

If $\ell_Z$ crosses $\ell_{Y \wlb 1 \wrb}$ (resp.\ crosses $\ell_{Y \wlb -1 \wrb}$) we are back in the previous case, and so there exists some object $A \in \langle G \rangle_{a+b}$ such that there exists a zig-zag
\[
\begin{tikzcd}
    A \arrow[r,dash] & Z.
\end{tikzcd}
\]
\end{proof}

The new zig-zags that are formed using Lemma \ref{Lem:ZigZagReduction} are minimal zig-zags, which we show with the next lemma.

\begin{lemma}\label{Lem:Reduce minimal}
    Let $G \in \ocC$ be homologically connected, and let
    \[
\begin{tikzcd}
    G_1 \arrow[r,dash] & G_2 \arrow[r,dash] & \cdots \arrow[r,dash] & G_{d+1},
\end{tikzcd}
\]
be a minimal zig-zag of objects in $\langle G \rangle_1$, with length $d$.
Then there exists a minimal zig-zag
\[
\begin{tikzcd}
    M_i \arrow[r,dash] & G_{i+1} \arrow[r,dash] & \cdots \arrow[r,dash] & G_{d+1},
\end{tikzcd}
\]
of length $d-i+1$, with $M \in \langle G \rangle_{i}$.
\end{lemma}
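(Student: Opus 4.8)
The plan is to induct on $i$, at each step peeling off the leftmost object of the zig-zag and replacing the pair $G_j \to G_{j+1}$ by a single object $M_{j+1}$ coming from an extension, exactly as licensed by Lemma \ref{Lem:ZigZagReduction}. Concretely, set $M_0 := G_1 \in \langle G \rangle_1$; this is the base case $i=0$, where the claimed zig-zag is the original one of length $d+1$. For the inductive step, suppose we have a minimal zig-zag $M_{i-1} \xrightarrow{\ \ } G_i \xrightarrow{\ \ } \cdots \xrightarrow{\ \ } G_{d+1}$ of length $d-i+2$ with $M_{i-1} \in \langle G \rangle_{i-1}$. Apply Lemma \ref{Lem:ZigZagReduction} to the initial segment $M_{i-1} \to G_i \to G_{i+1}$: with $X = M_{i-1} \in \langle G \rangle_{i-1}$ and $Y = G_i \in \langle G \rangle_1$, the lemma produces an indecomposable $M_i \in \langle G \rangle_{i-1+1} = \langle G \rangle_i$, appearing as a direct summand of an extension of $X$ by $Y$ (or of $Y$ by $X$), together with a minimal zig-zag $M_i \xrightarrow{\ \ } G_{i+1} \xrightarrow{\ \ } \cdots \xrightarrow{\ \ } G_{d+1}$. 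Counting edges, this new zig-zag has length $(d-i+2) - 1 = d-i+1$, which is exactly what the statement asserts. Running the induction from $i=0$ up to the desired value gives the result.

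The technical subtlety that must be addressed is the hypothesis of Lemma \ref{Lem:ZigZagReduction} that $Y \not\cong X\wlb \pm 1\wrb$ when $\ell_X$ is a limit arc; when this fails the lemma still applies but outputs $M_i$ as a summand of an extension involving $Y\wlb \pm 1\wrb$ rather than $Y$ itself, and crucially the grading statement $M_i \in \langle G \rangle_i$ persists because $Y\wlb \pm 1\wrb$ lies in the same piece $\langle G \rangle_1$ as $Y$ (suspension is an equivalence preserving $\langle G \rangle_1$). So in every branch of Lemma \ref{Lem:ZigZagReduction} we land in $\langle G \rangle_i$ and get a minimal zig-zag one edge shorter, so the induction goes through uniformly. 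One should also note the degenerate possibility flagged inside the proof of Lemma \ref{Lem:ZigZagReduction}, where $Z$ itself is isomorphic to the extension object $A$; in that case the resulting zig-zag $M_i \to G_{i+1} \to \cdots$ is simply the trivial (length-$0$) zig-zag on the relevant end, and the length bookkeeping still matches since a minimal zig-zag between isomorphic objects has length zero by convention.

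The main obstacle is really just the first-step reduction, i.e.\ correctly invoking Lemma \ref{Lem:ZigZagReduction} with the accumulated object $M_{i-1} \in \langle G \rangle_{i-1}$ playing the role of $X$ rather than an indecomposable in $\langle G \rangle_1$; one must check that $M_{i-1}$ is indecomposable (it is, since Lemma \ref{Lem:ZigZagReduction} always produces an indecomposable summand $A$) so that the hypotheses of that lemma are literally met. Minimality of each intermediate zig-zag is not something we verify by hand — it is delivered directly by the conclusion of Lemma \ref{Lem:ZigZagReduction}, which is the whole point of the preceding discussion that ``the new zig-zags that are formed using Lemma \ref{Lem:ZigZagReduction} are minimal zig-zags.'' Thus the proof is a short induction whose only real content is the careful application of the previous lemma at each stage, and I would write it in essentially the paragraph-length form above.
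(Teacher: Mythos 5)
There is a genuine gap, and it sits exactly where you declared there was nothing to check. You claim that minimality of each intermediate zig-zag ``is delivered directly by the conclusion of Lemma \ref{Lem:ZigZagReduction}.'' It is not. The conclusion of that lemma only asserts the existence of a (trivially) minimal zig-zag of length one between the new object $A$ and $Z$, i.e.\ a single degree-one edge from $M_i$ to $G_{i+1}$; it says nothing about the concatenated zig-zag $M_i - G_{i+1} - \cdots - G_{d+1}$, and in particular it does not rule out a degree-one morphism between $M_i$ and some later $G_j$ with $j \geq i+2$, which would make the concatenation non-minimal. This omission is doubly damaging for your induction: the hypothesis of Lemma \ref{Lem:ZigZagReduction} requires the length-two segment $M_{i} - G_{i+1} - G_{i+2}$ to be a \emph{minimal} zig-zag (its proof explicitly uses that there is no degree-one morphism between the outer two objects), so without establishing minimality at each stage you are not even entitled to apply the lemma at the next stage. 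The substantive content of the paper's proof is precisely this verification: one tracks the arcs and shows that $\ell_{M_i}$ has one endpoint shared with $\ell_{G_1}$ and the other with $\ell_{G_i}$ (with a separate adjustment, replacing $G_{i+1}$ by a suspension, in the limit-arc case), so that any crossing or shared accumulation-point endpoint between $\ell_{M_i}$ and $\ell_{G_j}$ for $j \geq i+2$ would force one between some $\ell_{G_a}$, $a \leq i$, and $\ell_{G_j}$, contradicting minimality of the original zig-zag. Your proposal contains no argument of this kind, so the claimed minimality (and hence the exact length $d-i+1$, and the legitimacy of the iteration) is unproved.

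A secondary point: your treatment of the degenerate branch of Lemma \ref{Lem:ZigZagReduction} (where $Z \cong A$, or where $G_{i+1} \cong M_i \wlb \pm 1 \wrb$ for $\ell_{M_i}$ a limit arc) is not just a bookkeeping remark. If $M_i \cong G_{i+1}$ the concatenation has length $d-i$, not $d-i+1$, and the statement you are proving asserts an exact length. The paper avoids this by replacing $G_{i+1}$ in the zig-zag with $G_{i+1}\wlb \pm 1 \wrb$ and checking (using that $G_{i+2}$ cannot correspond to a short arc) that the rest of the zig-zag survives the replacement; some argument of that sort is needed in your write-up as well. The overall inductive skeleton you propose is the same as the paper's, but as written it outsources to Lemma \ref{Lem:ZigZagReduction} a minimality statement that lemma does not provide.
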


\begin{proof}
    Let $\ell_{G_i} = \{y_i, z_i\}$ for all $i =1,\ldots,d+1$.
By Lemma \ref{Lem:ZigZagReduction}, there exists an object $M_2 \in \langle G \rangle_2$ such that
\[
\begin{tikzcd}
    M_2 \arrow[r,dash] & G_3 \arrow[r,dash] & \cdots \arrow[r,dash] & G_d \arrow[r,dash] & G_{d+1}
\end{tikzcd}
\]
is a zig-zag.
We repeat this process $d-1$ times using Lemma \ref{Lem:ZigZagReduction}, producing a series of zig-zags for $i = 2,\ldots, d$,
\[
\begin{tikzcd}
    M_i \arrow[r,dash] & G_{i+1} \arrow[r,dash] & \cdots \arrow[r,dash] & G_{d+1},
\end{tikzcd}
\]
where $M_i \in \langle G \rangle_{i-1}$.
We show that this zig-zag is minimal.

If $G_{i+1} \not\cong M_i \wlb \pm 1 \wrb$, or if $M_i$ is a long arc, then we may have $\ell_{M_{i+1}} = \{y_1,z_{i+1}\}$.
This is because $G_{i+1}$ and $M_i$ are indecomposable objects with a morphism of degree 1 between them, and so the cone of the morphism $G_{i+1}\wlb -1 \wrb \rightarrow M_i$ (or $M_i\wlb -1 \wrb \rightarrow G_{i+1}$) is a direct sum of objects corresponding to arcs sharing endpoints with $\ell_{G_{i+1}}$ and $\ell_{M_i}$.

If $M_i$ is a limit arc and $G_{i+1} \cong M_i\wlb 1 \wrb$ (resp.\ $G_{i+1} \cong M_i\wlb -1 \wrb$), then we may replace $G_{i+1}$ in the zig-zag with $G_{i+1}\wlb 1 \wrb \cong M_i \wlb 2 \wrb$ (resp.\ $G_{i+1}\wlb -1 \wrb \cong M_i \wlb -2 \wrb$).
This is possible as $G_{i+2}$ cannot be a short arc by Corollary \ref{Cor:NoShortArcs}, and $\ell_{G_{i+2}}$ crosses $\ell_{G_{i+1}}$ but not $\ell_{M_i}$, so $\ell_{G_{i+2}}$ must cross $\ell_{G_{i+1}\wlb 1 \wrb}$ (resp.\ $\ell_{G_{i+1}\wlb -1 \wrb}$).
In this case we have $\ell_{M_{i+1}} = \{x_1,z_i^-\}$ (resp.\ $\ell_{M_{i+1}} = \{y_1,z_i^+\}$).

As the zig-zag
\[
\begin{tikzcd}
    G_1 \arrow[r,dash] & G_2 \arrow[r,dash] & \cdots \arrow[r,dash] & G_d \arrow[r,dash] & G_{d+1}
\end{tikzcd}
\]
is minimal, then $\ell_{G_a}$ does not cross any arc $\ell_{G_j}$ for $j =a+2,\ldots,d+1$, or share an accumulation as an endpoint.
However, as $\ell_{M_i}$ shares one endpoint with $\ell_{G_1}$ and another with $\ell_{G_i}$, then $\ell_{G_j}$, for $j \geq i+2$ can only cross $\ell_{M_i}$ if it also crosses some $\ell_{G_a}$, or shares an endpoint at an accumulation point with some $\ell_{G_a}$, for $a \leq i$.
This cannot happen, and so there exists no morphisms of degree 1 between $M_i$ and $G_j$ for $j \geq i+2$, and so the zig-zag
\[
\begin{tikzcd}
    M_i \arrow[r,dash] & G_{i+1} \arrow[r,dash] & \cdots \arrow[r,dash] & G_{d+1},
\end{tikzcd}
\]
is minimal with $M_i \in \langle G \rangle_i$.
\end{proof}

Next we show when we can expect an indecomposable object to be homologically connected.

\begin{proposition}\label{Prop:all X hom con}
Let $X \in \ocC$ be an indecomposable object.
Then $X$ is homologically connected.
\end{proposition}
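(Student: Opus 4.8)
The plan is to use that, since $X$ is indecomposable and $\ocC$ is Krull--Schmidt, the indecomposable objects of $\langle X \rangle_1 = \mathrm{add}\{X\wb{i}\mid i\in\mathbb{Z}\}$ are precisely the shifts $X\wb{i}$, $i\in\mathbb{Z}$, and then to produce a single bi-infinite zig-zag
\[
\begin{tikzcd}
\cdots \arrow[r,dash] & X\wb{2} \arrow[r,dash] & X\wb{1} \arrow[r,dash] & X \arrow[r,dash] & X\wb{-1} \arrow[r,dash] & \cdots
\end{tikzcd}
\]
linking all of them, from which a zig-zag between any prescribed pair $X\wb{i},X\wb{j}$ is read off as a finite sub-path. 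Because $\wb{1}$ is an auto-equivalence it suffices to produce, for each $i$, one non-zero degree-$1$ morphism between $X\wb{i}$ and $X\wb{i+1}$, and applying $\wb{-i}$ reduces this to the case $i=0$, i.e.\ to exhibiting a non-zero element of $\Ext{1}{X}{X\wb{1}}$ or of $\Ext{1}{X\wb{1}}{X}$; any such element has a non-trivial extension, its connecting morphism being non-zero.

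I would carry this out by cases on the type of $\ell_X$. If $\ell_X$ is a double limit arc then $\ell_X\wb{1}=\ell_X$, so $X\wb{i}\cong X$ for all $i$ and $\langle X \rangle_1$ contains a single indecomposable object up to isomorphism; here $X$ is homologically connected for the trivial reason that a minimal zig-zag between isomorphic objects has length $0$. If $\ell_X=\{x_1,x_2\}$ has both endpoints in $\sM$ (a short or long arc), then $\ell_{X\wb{1}}=\{x_1^-,x_2^-\}$; the four endpoints are distinct because the arc condition on $\ell_X$ forces $x_2\neq x_1^-,x_1^+$, and since $(x_k^-,x_k)\cap\osM=\emptyset$ for $k=1,2$ the cyclic order places $x_2^-$ on one of the two open arcs cut out by $\ell_X$ and $x_1^-$ on the other, so $\ell_X$ and $\ell_{X\wb{1}}$ cross; hence $\Ext{1}{X}{X\wb{1}}\cong\Hom{X}{X\wb{2}}\neq 0$. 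If $\ell_X=\{x_1,a\}$ is a limit arc with $a\in L(\sM)$, then $\ell_{X\wb{1}}=\{x_1^-,a\}$ is again a limit arc, shares the accumulation point $a$ with $\ell_X$, and $\ell_X$ is precisely the anticlockwise rotation of $\ell_{X\wb{1}}$ about $a$; applying the $\Hom$-criterion for $\cC(S,\osM)$ to $\Hom{X\wb{1}}{X\wb{1}}$ then gives $\Ext{1}{X\wb{1}}{X}\cong\Hom{X\wb{1}}{X\wb{1}}\neq 0$. In every case we obtain the required non-zero degree-$1$ morphism between $X$ and $X\wb{1}$, so the bi-infinite zig-zag exists and $X$ is homologically connected.

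The delicate point is the limit-arc case. There the extension realising the morphism $X\wb{1}\to X$ has zero cone --- the pair $\{x_1^-,x_1\}$ that the relevant distinguished triangle would produce is not an arc --- and in fact these degenerate morphisms $X\wb{i+1}\to X\wb{i}$ are the \emph{only} degree-$1$ morphisms between shifts of a limit arc, so the short/long-arc reasoning cannot simply be reused. I would therefore make explicit that a non-zero class in $\Ext{1}{X\wb{1}}{X}$ --- equivalently, the non-split distinguished triangle $X\to 0\to X\wb{1}\to X\wb{1}$ --- is a legitimate non-zero degree-$1$ morphism with non-trivial extension. Everything else (unwinding the cyclic orders, and reading $\Ext{1}$-groups off the crossing and shared-endpoint criteria) is routine.
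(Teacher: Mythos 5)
Your double limit, short and long arc cases are fine and essentially match the paper's argument: $\ell_X$ and $\ell_{X\wb{1}}$ cross, so $\Ext{1}{X}{X\wb{1}}\cong k$ and the extension is a non-zero object. The gap is in the limit arc case, on two counts. First, your assertion that the degenerate morphisms between adjacent shifts are the \emph{only} degree-$1$ morphisms between shifts of a limit arc is false: for $\ell_X=\{x_1,a\}$ with $a\in L(\sM)$ one has $\Ext{1}{X}{X\wb{i}}\cong\Hom{X}{X\wb{i+1}}\cong k$ for every $i<0$, because $\ell_{X\wb{i}}$ is an anticlockwise rotation of $\ell_X$ about $a$; and for $i\leq -2$ the corresponding extension is the indecomposable object on the arc $\{x_1,x_1^{+\lvert i\rvert}\}$, which is a genuine arc and in particular non-zero. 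Second, the edges you do use all have zero cone, and the definition of homologically connected requires degree-$1$ morphisms \emph{with a non-trivial extension}. The Remark after Lemma \ref{Lem:decomposition} settles how this is meant: the hypothesis there that $\wb{2}$ is not the identity on indecomposables is imposed precisely so that the chosen class in $\Ext{1}{X}{X\wb{1}}$ is not an isomorphism $X\to X\wb{2}$, i.e.\ so that its extension object is non-zero; under your reading (``non-trivial'' $=$ ``non-split triangle'') that hypothesis would be redundant. So declaring the triangle $X\to 0\to X\wb{1}\to X\wb{1}$ admissible is not supported by the definition, and your bi-infinite chain of adjacent shifts is not a legitimate zig-zag for limit arcs.

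The repair is exactly the morphisms you overlooked, and it is what the paper does: quoting \cite{Paquette2020}, use the non-zero classes in $\Ext{1}{X}{X\wb{i}}$ for $i\leq -2$, whose extensions are the arcs $\{x_1,x_1^{+\lvert i\rvert}\}$. These give admissible zig-zag edges between any two shifts $X\wb{p}$ and $X\wb{q}$ with $\lvert p-q\rvert\geq 2$, and two adjacent shifts $X\wb{p}$, $X\wb{p+1}$ are then joined by a zig-zag of length $2$ through a third shift at distance at least two from both (for instance $X\wb{p+3}$). With that substitution the limit arc case closes, and the rest of your argument stands as written.
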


\begin{proof}
We must check the four different types of arc; double limit arcs, limit arcs, long arcs, and short arcs.

If $\ell_X$ is a double limit arc, then $X \cong X[i]$ for all $i \in \mathbb{Z}$, and so $X$ is homologically connected.
Similarly, if $\ell_X$ is a limit arc, then by \cite{Paquette2020} there exists a morphism of degree 1 in $\Ext{1}{X}{X \wb{i}}$ for all $i < 0$, so $X$ is homologically connected.

Now suppose that $\ell_X$ is a long arc, then there exists a morphism of degree 1 in $\Ext{1}{X}{X \wb{i}}$ for all $i \in \mathbb{Z}\backslash \{0\}$, as $\ell_X$ and $\ell_{X[i]}$ cross for all $i \neq 0$.
So $X$ is homologically connected.

Now let $\ell_X$ be a short arc.
Then $\ell_X$ and $\ell_{X\wlb 1 \wrb}$ cross, and so
\[
\Ext{1}{X}{X \wb{1}} \cong k.
\]
Therefore there exists a non-zero morphism of degree 1 in $\Ext{1}{X}{X \wb{1}}$, and so there exists a sequence of morphisms of degree 1 from $X[i]$ to $X$ for all $i \geq 0$.
Hence there is a zig-zag between any two suspensions of $X$, and so $X$ is homologically connected.
\end{proof}

Proposition \ref{Prop:all X hom con} implies that we can apply Lemma \ref{Lem:decomposition} to $\ocC$.
That is, all objects in $\ocC$ have a hc decomposition.

\section{Thick Subcategories}\label{Sec:Thick}

\subsection{Thick Subcategories of \texorpdfstring{$\cC_n$}{Cn}}

The thick subcategories of $\mathcal{C}_n$ were classified by Gratz and Zvonareva in \cite{GZ2021}.
Moreover, they prove that there exists an isomorphism of lattices between the thick subcategories of $\mathcal{C}_n$, $\mathrm{thick}(\mathcal{C}_n)$, and the lattice of \textit{non-exhaustive non-crossing partitions of} $[n] = \{1,\ldots,n\}$.

Let $\mathcal{P} = \{ B_m \subseteq [n] \mid m \in I \}$ be a collection of non-empty subsets of $[n]$, called \textit{blocks}, for some indexing set $I$.
Then Gratz and Zvonareva \cite{GZ2021} define $\mathcal{P}$ to be a \textit{non-exhaustive non-crossing partitions of} $[n]$ if $B_{m_1} \cap B_{m_2} = 0$ when $m_1 \neq m_2 \in I$, and whenever
\[
1 \leq i < k < j < l \leq n 
\]
for $i,j,k,l \in [n]$ with $i,j \in B_{m_1}$ and $k,l \in B_{m_2}$ for $m_1,m_2 \in I$, then $m_1 = m_2$.
The set of non-exhaustive non-crossing partitions of $[n]$ is denoted $NNC_n$.

The set $NNC_n$ forms a bounded lattice via the meet and join operations given by Gratz and Zvonareva in \cite{GZ2021}.
Let us fix two non-exhaustive non-crossing partitions, $\cP = \{ B_m \subseteq [n] \mid m \in I \}$ and $\cP' = \{ B'_{m'} \subseteq [n] \mid m' \in I' \}$ in $NNC_n$.
The meet of $\cP$ and $\cP'$, is given by
\[
\cP \wedge \cP' = \{ B_m \cap B_{m'} \mid m \in I, m' \in I' \, \text{and} \, B_m \cap B_{m'} \neq \emptyset\}.
\]

To describe the join, we uniquely extend $\cP \in NNC_n$ to a non-crossing partition,
\[
\overline{\cP} := \cP \cup \{ \{l\} \in [2n] \mid \text{there exists no } B \in \cP \, \text{such that } l \in B\},
\]
by adding singletons.
The join of $NNC_n$ is then given by 
\[
\cP \vee \cP' = (\overline{\cP} \vee \overline{\cP'}) \backslash \{ \{l\} \mid \text{there exists no } B \in \cP \cup \cP' \, \text{such that } l \in B\},
\]
where $\overline{\cP} \vee \overline{\cP'}$ is the join of non-crossing partitions given by Kreweras \cite{Kreweras}.

Let $\mathcal{P} = \{ B_m \subseteq [n] \mid m \in I \}$ be a non-exhaustive non-crossing partition of $[n]$.
The authors of \cite{GZ2021} consider the full subcategory $\langle \mathcal{P} \rangle \subseteq \mathcal{C}_n$ that is closed under direct sums and direct summands, and contains the zero object,
\[
\langle \mathcal{P} \rangle := \mathrm{add} \{ X \in \mathcal{C}_n \mid \ell_X = \{x,y\}, \, x,y \in \bigcup_{i \in B_m} (a_i,a_{i+1}), \text{for some} \, m \in I \}.
\]
Recall that $a_1,\ldots,a_n$ are the accumulation points in the combinatorial model for $\mathcal{C}_n$.

\begin{theorem}\cite[Theorem 3.7]{GZ2021}\label{Thm:thick in cn}
There is an isomorphism of lattices
\[
NNC_n \cong \mathrm{thick} (\mathcal{C}_n).
\]
Under this isomorphism a non-exhaustive non-crossing partition $\mathcal{P}$ corresponds to the thick subcategory $\langle \mathcal{P} \rangle$.
\end{theorem}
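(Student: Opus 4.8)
This statement is \cite[Theorem 3.7]{GZ2021}; the plan below reconstructs that argument in the present notation. The map to analyse is $\Phi\colon NNC_n\to\mathrm{thick}(\cC_n)$, $\mathcal{P}\mapsto\langle\mathcal{P}\rangle$, together with an inverse $\Psi$ reading a partition off a thick subcategory. First one checks $\langle\mathcal{P}\rangle$ is thick. Closure under finite sums and summands, and membership of $0$, come from the $\mathrm{add}$-construction. Closure under $\wb{\pm1}$ uses the two-sided limit condition: no marked point in the model for $\cC_n$ is adjacent to an accumulation point, so $x\in(a_i,a_{i+1})$ forces $x^-,x^+\in(a_i,a_{i+1})$, and the condition ``both endpoints in $\bigcup_{i\in B_m}(a_i,a_{i+1})$'' survives rotation. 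For cones, the key remark is that arcs confined to the intervals of two distinct blocks never cross, by the non-crossing property of $\mathcal{P}$; since $\wb{-1}$ preserves intervals, this makes $\Hom{X}{Y}=0$ whenever $X,Y\in\langle\mathcal{P}\rangle$ are indecomposables belonging to different blocks, so $\langle\mathcal{P}\rangle$ is a block-orthogonal sum and it suffices to treat a single block; there the combinatorial model of Section \ref{Sec:Prelims} shows the cone of a morphism between two indecomposables is a sum of indecomposables whose arcs have endpoints among the (at most four) endpoints of the two source arcs (cf.\ Figure \ref{fig:ExTri}), hence stays in $\bigcup_{i\in B_m}(a_i,a_{i+1})$.

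Next, $\Phi$ is injective and order-preserving with order-preserving inverse. From $\langle\mathcal{P}\rangle$ one recovers $\mathcal{P}$: the intervals $(a_i,a_{i+1})$ meeting an endpoint of some arc of $\langle\mathcal{P}\rangle$ are exactly $\bigcup_m B_m$ (the elements of $[n]$ outside this union account for non-exhaustiveness), and $i,j$ lie in one block iff some arc of $\langle\mathcal{P}\rangle$ has one endpoint in $(a_i,a_{i+1})$ and the other in $(a_j,a_{j+1})$ --- such an arc always exists when $i,j$ are in a common block, its endpoints being then automatically distinct and non-adjacent. Hence $\langle\mathcal{P}\rangle\subseteq\langle\mathcal{P}'\rangle$ exactly when $\mathcal{P}$ refines to $\mathcal{P}'$, with the empty partition corresponding to $0$ and the one-block partition $[n]$ to $\cC_n$.

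For surjectivity, given a thick subcategory $\mathcal{T}$, set $\Psi(\mathcal{T})$ to be the partition of the intervals used by $\mathcal{T}$ (those meeting an endpoint of some arc of $\mathcal{T}$) into the connected components of the relation ``$i\sim j$ if some arc of $\mathcal{T}$ joins $(a_i,a_{i+1})$ to $(a_j,a_{j+1})$''. I would then check $\Psi(\mathcal{T})\in NNC_n$: were it crossing, with $i<k<j<l$ cyclically and $i\sim j$, $k\sim l$ in different blocks, a chain of arcs of $\mathcal{T}$ realising $i\sim j$ yields a curve in the disc from interval $i$ to interval $j$ separating interval $k$ from interval $l$, hence crossing some arc of a chain realising $k\sim l$; a crossing pair $X,Y$ then has $\Ext{1}{X}{Y}\neq0$, and the middle term of the associated triangle lies in $\mathcal{T}$ with an endpoint in an interval of each block, so those blocks coincide --- a contradiction. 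Finally $\mathcal{T}=\langle\Psi(\mathcal{T})\rangle$: the inclusion $\subseteq$ holds because the two endpoints of any arc of $\mathcal{T}$ lie in intervals of a single block, and the reverse inclusion is a generation statement.

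I expect this generation step to be the main obstacle: from arcs of $\mathcal{T}$ witnessing that all intervals of a block $B_m$ are mutually connected, one must produce by iterated cones every arc with both endpoints in $\bigcup_{i\in B_m}(a_i,a_{i+1})$. This reduces to (i) a single short arc inside $(a_i,a_{i+1})$ thickly generating all short arcs inside it, and (ii) an arc joining $(a_i,a_{i+1})$ to $(a_j,a_{j+1})$, together with generators of the two intervals, generating every arc joining them --- both by induction on a combinatorial width of the target arc, using the triangles of Figure \ref{fig:ExTri} to split or shrink arcs. Granting this, $\Phi$ is an inclusion-preserving bijection with inclusion-preserving inverse between the bounded lattices $NNC_n$ (refinement order) and $\mathrm{thick}(\cC_n)$ (inclusion), hence a lattice isomorphism; one can additionally verify by inspection that the meet and join on $NNC_n$ recalled above correspond to intersection of thick subcategories and the thick closure of their union.
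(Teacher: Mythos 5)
There is nothing in the paper to compare your argument against: Theorem \ref{Thm:thick in cn} is imported verbatim from \cite[Theorem 3.7]{GZ2021} and the paper gives no proof of it, so your proposal can only be judged as a free-standing reconstruction of the Gratz--Zvonareva result. As an outline it is sensible --- the block-orthogonality of $\langle \mathcal{P} \rangle$, the recovery of $\mathcal{P}$ from the intervals and the connectivity relation, the separation argument showing $\Psi(\mathcal{T})$ is non-crossing, and the observation that an order isomorphism of bounded lattices is automatically a lattice isomorphism are all correct in spirit --- but two steps you pass over quickly are exactly where the substance of the theorem lies, so the proposal as written is a plan rather than a proof.

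First, the generation step you yourself flag as ``the main obstacle'' is the heart of the classification: one must show that a short arc thickly generates every arc supported in its interval (equivalently, that $\cC_1$ has no proper nonzero thick subcategories) and that an arc joining two intervals, together with generators of each interval, generates every joining arc. Deferring this to ``induction on a combinatorial width'' assumes the hardest part of \cite{GZ2021}, where this is done by an explicit cone calculus on arcs. Second, your verification that $\langle \mathcal{P} \rangle$ is thick only treats cones of morphisms between indecomposable objects; in a $\mathrm{Hom}$-finite Krull--Schmidt category, closure of an additive, shift- and summand-closed subcategory under cones of arbitrary morphisms between direct sums does not follow formally from the indecomposable case, since a cone of a map $\bigoplus X_i \rightarrow \bigoplus Y_j$ need not split into cones of components. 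You need either the explicit description of cones of general morphisms in $\cC_n$ (or a minimality/filtration argument reducing to indecomposables), or an indirect argument realising $\langle \mathcal{P} \rangle$ as a subcategory already known to be thick. With those two points supplied the reconstruction would match the cited proof; without them it has genuine gaps.
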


It is noted by Gratz and Zvonareva \cite{GZ2021} that in general for $\mathcal{P} = \{ B_m \subseteq [n] \mid m \in I \}$, the subcategory $\langle \mathcal{P} \rangle$ is equivalent to the union of mutually orthogonal thick subcategories of the form $\langle \{ B_m \} \rangle$.
Here, we show how to construct $\langle G \rangle$ for any object $G \in \mathcal{C}_n$ in terms of non-exhaustive non-crossing partitions of $[n]$.

\begin{lemma}\label{Lem:Thick objects}
    Let $F \in \mathcal{C}_n$ be a object, and let $F \cong \bigoplus_{i \in I} F_i$ be a hc decomposition of $F$.
    Let $\{B_{m_i} \mid i \in I \}$ be a collection of subsets of $[n]$ such that $\mathscr{M}_{F_i} = \bigcup_{p \in B_{m_i}} (a_p,a_{p+1})$ for all $i \in I$.
    
    Then $\mathcal{P} = \{ B_{m_i} \mid i \in I\}$ is a non-exhaustive non-crossing partition of $[n]$.
    Moreover, $\langle F \rangle$ is equivalent to $\langle \mathcal{P} \rangle$.
\end{lemma}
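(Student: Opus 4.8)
The plan is to reduce the statement to three assertions about the homologically connected summands $F_i$ and then assemble them through the lattice isomorphism of Theorem~\ref{Thm:thick in cn}. I would begin with the elementary observation that the orbit of any \emph{indecomposable} $X\in\mathcal{C}_n$ is a union of at most two of the intervals $(a_p,a_{p+1})$: if $\ell_X=\{x_0,x_1\}$ with $x_0\in(a_p,a_{p+1})$ then, since $(a_p,a_{p+1})\cap\mathscr{M}$ is order-isomorphic to $\mathbb{Z}$, the successors and predecessors of $x_0$ exhaust $(a_p,a_{p+1})\cap\mathscr{M}$. Taking unions over indecomposable summands, $\mathscr{M}_{F_i}$ is a union of intervals $(a_p,a_{p+1})$, so the $B_{m_i}$ of the statement exist (namely $B_{m_i}=\{p:(a_p,a_{p+1})\subseteq\mathscr{M}_{F_i}\}$), and $I$ is finite by Krull--Schmidt. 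It then suffices to establish: (A) $\langle F_i\rangle=\langle\{B_{m_i}\}\rangle$ for each $i$; (B) $B_{m_i}\cap B_{m_j}=\emptyset$ for $i\neq j$; (C) the family $\{B_{m_i}\mid i\in I\}$ is non-crossing. Indeed (B), (C) and the non-emptiness of the blocks say precisely that $\mathcal{P}=\{B_{m_i}\}$ is a non-exhaustive non-crossing partition of $[n]$, and then, using (A), the fact that the thick closure of a finite direct sum is the join of the thick closures, and Gratz--Zvonareva's remark that $\langle\mathcal{P}\rangle$ is the union of the mutually orthogonal subcategories $\langle\{B_{m_i}\}\rangle$, we conclude $\langle F\rangle=\bigvee_i\langle F_i\rangle=\bigvee_i\langle\{B_{m_i}\}\rangle=\langle\mathcal{P}\rangle$.

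For (A) the inclusion $\langle F_i\rangle\subseteq\langle\{B_{m_i}\}\rangle$ is clear, since each indecomposable summand of $F_i$ has both endpoints in $\bigcup_{p\in B_{m_i}}(a_p,a_{p+1})$ and $\langle\{B_{m_i}\}\rangle$ is thick. For the reverse inclusion I would first treat a single indecomposable $X$ with interval support $B_{m_X}$: feeding $X$ and a suspension $X[k]$ that it crosses into the distinguished triangles of Section~\ref{Sec:Prelims} produces, as the indecomposable summands of the third term, shorter arcs supported in the same intervals; if $B_{m_X}=\{p,q\}$ this yields short arcs inside $(a_p,a_{p+1})$ and inside $(a_q,a_{q+1})$, iterating the same device on a short arc builds every short arc of its interval, and crossing those against suspensions of $X$ recovers every long arc between $p$ and $q$, so $\langle X\rangle=\langle\{B_{m_X}\}\rangle$. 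Writing $F_i=\bigoplus_aX_a$ with $X_a$ indecomposable then gives $\langle F_i\rangle=\bigvee_a\langle X_a\rangle=\langle\bigvee_a\{B_{m_{X_a}}\}\rangle$; and since $F_i$ is homologically connected, any two $X_a,X_b$ are joined by a zig-zag each of whose edges is a crossing of two arcs, whence the (size $\le2$) interval supports of consecutive arcs either meet or cross and so get merged in the join. Thus all the $B_{m_{X_a}}$ collapse to the single block $\bigcup_aB_{m_{X_a}}=B_{m_i}$, giving $\langle F_i\rangle=\langle\{B_{m_i}\}\rangle$.

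Both (B) and (C) are proved by contradicting ``$F_i\oplus F_j$ is not homologically connected''. Since $F_i$ and $F_j$ are individually homologically connected, and since two indecomposable summands lying in distinct pieces of an hc decomposition are never suspensions of one another (otherwise their direct sum would be homologically connected, forcing them into the same piece), this non-connectedness is equivalent to: \emph{no indecomposable of $\langle F_i\rangle_1$ has its arc crossing the arc of an indecomposable of $\langle F_j\rangle_1$}. The engine is an ``intermediate value'' lemma: if $G$ is homologically connected and the circle is split into two arcs $D_1,D_2$ by removing two disjoint open sub-arcs that avoid every endpoint of $G$ (or, degenerately, by two marked points avoiding every endpoint of $G$), and some indecomposable of $\langle G\rangle_1$ has an endpoint in $D_1$ while some indecomposable of $\langle G\rangle_1$ has an endpoint in $D_2$, then some indecomposable of $\langle G\rangle_1$ has one endpoint in $D_1$ and one in $D_2$; this holds because, walking along a zig-zag from a $D_1$-side arc to a $D_2$-side arc, the first arc meeting $D_2$ either is the starting arc (which then already has an endpoint in $D_1$) or must cross its predecessor, which lies entirely in $D_1$, and hence cannot lie entirely in $D_2$. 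For (B): if $p\in B_{m_i}\cap B_{m_j}$, choose indecomposable summands $X$ of $F_i$ and $Y$ of $F_j$ each with an endpoint in $(a_p,a_{p+1})$; a short case check (short arc versus short arc, short versus long, long versus long) exhibits suspensions of $X$ and $Y$ with crossing arcs. For (C): if $B_{m_a}$ crosses $B_{m_b}$, say $i<k<j<l$ cyclically with $i,j\in B_{m_a}$ and $k,l\in B_{m_b}$, then $F_a$ avoids the intervals $(a_k,a_{k+1})$ and $(a_l,a_{l+1})$ and meets both components $C_i\ni(a_i,a_{i+1})$ and $C_j\ni(a_j,a_{j+1})$ of their complement, so the lemma gives an indecomposable $Z_a\in\langle F_a\rangle_1$ with one endpoint in $C_i$ and one in $C_j$; its arc then separates $(a_k,a_{k+1})$ from $(a_l,a_{l+1})$. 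By (B), $F_b$ avoids the two intervals containing the endpoints of $Z_a$ while meeting both $(a_k,a_{k+1})$ and $(a_l,a_{l+1})$, so a second application of the lemma, now with chord $\ell_{Z_a}$, yields $Z_b\in\langle F_b\rangle_1$ whose arc crosses $\ell_{Z_a}$ --- the desired contradiction.

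The routine ingredients are the orbit computation, the extension-triangle bookkeeping in (A), and the dictionary between the lattice operations on thick subcategories and on $NNC_n$. The main obstacle is the combinatorics of (C): formulating the intermediate value lemma correctly and, in particular, executing the two successive applications while keeping track of which intervals fall on which side of the relevant chords; the case check in (B) is elementary but still requires noting that the crossings it produces have non-trivial extensions, which is automatic here precisely because summands of $F_i$ and $F_j$ cannot be suspensions of one another.
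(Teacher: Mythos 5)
Your plan is correct, and it diverges from the paper's proof in an interesting way. For the claim that $\mathcal{P}$ is a non-exhaustive non-crossing partition you and the paper exploit the same mechanism --- if two blocks met or crossed, homological connectedness of $F_i$ and $F_j$ would force a crossing between arcs of objects in $\langle F_i\rangle_1$ and $\langle F_j\rangle_1$, hence a degree-one morphism contradicting the hc decomposition --- but the executions differ: the paper traces a composite curve along each zig-zag and argues topologically that the two traced curves must cross, while you isolate a discrete ``intermediate value'' lemma (a zig-zag cannot jump from one side of a splitting of the circle to the other without some arc straddling it, since consecutive arcs cross) and apply it twice, first to produce a separating chord in $\langle F_a\rangle_1$ and then to force something in $\langle F_b\rangle_1$ to cross it; your version is a bit longer but more reusable and avoids the informal curve-tracing, and your bookkeeping of which segments lie on which side of the chords does check out. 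For the ``moreover'' part your route is genuinely different: the paper argues top-down, using Theorem \ref{Thm:thick in cn} to write $\langle F\rangle=\langle\mathcal{P}'\rangle$ for some partition and then ruling out a smaller $\mathcal{P}'$ by an orbit argument, whereas you compute bottom-up, showing $\langle X\rangle=\langle\{B_{m_X}\}\rangle$ for each indecomposable by explicit extension triangles and then merging the single-block partitions along zig-zags via the join (using that the lattice isomorphism respects joins and that blocks which meet or cross are merged). The bottom-up argument costs some triangle bookkeeping and care with the Kreweras join, but it buys a constructive proof of the containment $\langle\mathcal{P}\rangle\subseteq\langle F\rangle$ and makes explicit why a partition with the same support but finer blocks cannot occur --- a point the paper's orbit-based step treats only briefly --- so both proofs are valid but yours is the more self-contained of the two.
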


\begin{proof}
First, we show that the collection $\mathcal{P}$ is a non-exhaustive non-crossing partition of $[n]$.

Let $B_{m_i},B_{m_j}$ be two subsets of $[n]$, corresponding to the homologically connected objects $F_i$ and $F_j$ respectively.
As $F_i$ and $F_j$ are non-isomorphic objects in a hc decomposition of $F$, there are no morphism of degree 1 between indecomposable objects in $\langle F_i \rangle_1$ and indecomposable objects in $\langle F_j \rangle_1$.
In particular, $\sM_{F_i} \cap \sM_{F_j} = \emptyset$, hence $B_{m_i} \cap B_{m_j} = 0$ if $m_i \neq m_j$.

Let $e,f,g,h, \in [n]$ such that 
\[
1 \leq e < g < f < h \leq n,
\]
with $e,f \in B_{m_i}$ and $g,h \in B_{m_j}$.
As $F_i$ and $F_j$ are homologically connected, there exists zig-zags
\[
\begin{tikzcd}
    X_1 \arrow[r,dash] & X_2 \arrow[r,dash] & X_3 \arrow[r,dash,dashed] & X_{s-1} \arrow[r,dash] & X_s,\\
    Y_1 \arrow[r,dash] & Y_2 \arrow[r,dash] & Y_3 \arrow[r,dash,dashed] & Y_{t-1} \arrow[r,dash] & Y_t,
\end{tikzcd}
\]
with $X_1,\ldots,X_s \in \langle F_i \rangle_1$ and $Y_1,\ldots,Y_t \in \langle F_j \rangle_1$, such that $\ell_{X_1}$ has an endpoint $x \in (a_e,a_{e+1})$, $\ell_{X_s}$ has an endpoint in $x' \in (a_f,a_{f+1})$, $\ell_{Y_1}$ has an endpoint in $y \in (a_g,a_{g+1})$, and $\ell_{Y_t}$ has an endpoint in $y' \in (a_h,a_{h+1})$.
As there is a morphism of degree 1 between $X_{s'}$ and $X_{s'+1}$, then $\ell_{X_{s'}}$ and $\ell_{X_{s'+1}}$ must cross, similarly $\ell_{Y_{t'}}$ and $\ell_{Y_{t'+1}}$ must cross.

Let $\ell_X$ be an arc with endpoints $\{x,x'\}$, such that $\ell_X$ traces $\ell_{X_{s'}}$ until $\ell_{X_{s'}}$ and $\ell_{X_{s'+1}}$ cross, then $\ell_X$ traces $\ell_{X_{s'+1}}$ for all $s'=1,\ldots,s-1$.
We define $\ell_Y = \{y,y'\}$ similarly for $t'=1,\ldots,t-1$.
As $x<y<x'<y'<x$, then $\ell_X$ and $\ell_Y$ must cross, therefore $\ell_{X_{s'}}$ and $\ell_{Y_{t'}}$ must cross for some $s' \in \{1,\ldots,s\}$ and some $t' \in \{1,\ldots,t\}$.
Hence there is a morphism of degree 1 between $X_{s'} \in \langle F_i \rangle_1$ and $Y_{t'} \in \langle F_j \rangle_1$, and so $F_i \oplus F_j$ must be homologically connected.
However, $F_i$ and $F_j$ are both part of a hc decomposition of $F$, and so by Lemma \ref{Lem:decomposition}, $F_i \oplus F_j$ is only homologically connected if $i=j$.
Therefore $m_i=m_j$, and so $\mathcal{P}$ is a non-exhaustive non-crossing partition of $[n]$.

To show that $\langle F \rangle$ is equivalent to $\langle \mathcal{P} \rangle$, we note that $\langle F \rangle$ is a thick subcategory and so is equivalent to $\langle \mathcal{P}' \rangle$ for some non-exhaustive non-crossing partition $\mathcal{P}'$.
Suppose $\langle \mathcal{P}' \rangle \subset \langle \mathcal{P} \rangle$, then there exists some $p \in [n]$ such that indecomposable objects corresponding to arcs with an endpoint in $(a_p,a_{p+1})$ are in $\langle \mathcal{P} \rangle$ but not $\langle \mathcal{P}' \rangle$.
However, there exists an indecomposable direct summand of $F$ that corresponds to an arc with an endpoint in $(a_p,a_{p+1})$, as $\mathscr{M}_{F_i} = \bigcup_{p \in B_{m_i}} (a_p,a_{p+1})$ for all $i \in I$ and $\mathcal{P} = \{ B_{m_i} \mid i \in I\}$.
Therefore $F \not\in \langle \mathcal{P}' \rangle$, and so $\langle F \rangle$ is equivalent to $\langle \mathcal{P} \rangle$.
\end{proof}

Note that $\mathcal{C}_n$ satisfies the axioms of Lemma \ref{Lem:decomposition}, as $\mathcal{C}_n$ is a full triangulated subcategory of $\overline{\mathcal{C}}_n$ \cite{Paquette2020}, and so every indecomposable object is homologically connected by Lemma \ref{Prop:all X hom con}.

We provide two examples of thick subcategories of $\mathcal{C}_6$ as illustration.

\begin{figure}[H]
    \centering
    \begin{tikzpicture}
    \draw (0,0) circle (4cm);
    \draw[fill=black, opacity=0.2] (0,0) circle (4cm);
    \path[fill=white] ({4*sin(59)},{4*cos(59)}) -- ({4*sin(61)},{4*cos(61)}) -- ({4*sin(299)},{4*cos(299)}) --({4*sin(301)},{4*cos(301)}) -- cycle;
    \path[fill=white] ({4*sin(119)},{4*cos(119)}) -- ({4*sin(121)},{4*cos(121)}) -- ({4*sin(239)},{4*cos(239)}) --({4*sin(241)},{4*cos(241)}) -- cycle;
    \draw[fill=white] ({4*sin(0)},{4*cos(0)}) circle (0.1cm);
    \draw[fill=white] ({4*sin(60)},{4*cos(60)}) circle (0.1cm);
    \draw[fill=white] ({4*sin(120)},{4*cos(120)}) circle (0.1cm);
    \draw[fill=white] ({4*sin(180)},{4*cos(180)}) circle (0.1cm);
    \draw[fill=white] ({4*sin(240)},{4*cos(240)}) circle (0.1cm);
    \draw[fill=white] ({4*sin(300)},{4*cos(300)}) circle (0.1cm);
    \draw[thin] ({4*sin(74)},{4*cos(74)}) -- ({4*sin(254)},{4*cos(254)}) node[pos=0.5,above] {$\ell_X$};
    \draw[thin] ({4*sin(158)},{4*cos(158)}) .. controls (0,-3) .. ({4*sin(223)},{4*cos(223)}) node[pos=0.5,above] {$\ell_Y$};
    \draw[thin] ({4*sin(330)},{4*cos(330)}) .. controls (0,3) .. ({4*sin(30)},{4*cos(30)}) node[pos=0.5,below] {$\ell_Z$};
    \node at (4.5,0) {$\mathscr{M}_X$};
    \node at ({4.3*sin(30)},{4.3*cos(30)}) {$\mathscr{M}_Z$};
    \node at ({4.3*sin(210)},{4.3*cos(210)}) {$\mathscr{M}_Y$};
    \node at ({4.3*sin(0)},{4.3*cos(0)}) {$a_2$};
    \node at ({4.3*sin(60)},{4.3*cos(60)}) {$a_1$};
    \node at ({4.3*sin(120)},{4.3*cos(120)}) {$a_6$};
    \node at ({4.3*sin(180)},{4.3*cos(180)}) {$a_5$};
    \node at ({4.3*sin(240)},{4.3*cos(240)}) {$a_4$};
    \node at ({4.3*sin(300)},{4.3*cos(300)}) {$a_3$};
    \end{tikzpicture}
    \caption{A representation of the thick subcategory $\mathcal{T}_1$ of $\mathcal{C}_6$ containing the object $W \cong X \oplus Y \oplus Z$. The indecomposable objects correspond to the arcs that are entirely contained in the shaded area, with non-exhaustive non-crossing partition $\mathcal{P}=\{ \{1,2\},\{3,6\},\{4,5\}\}$.}
    \label{fig:thick in c6}
\end{figure}

\begin{figure}[H]
    \centering
    \begin{tikzpicture}
    \draw[fill=black, opacity=0.2] (0,0) circle (4cm);
    \path[fill=white] ({4*sin(119)},{4*cos(119)}) -- ({4*sin(121)},{4*cos(121)}) -- ({4*sin(239)},{4*cos(239)}) --({4*sin(241)},{4*cos(241)}) -- cycle;
    \path[fill=white] ({4*sin(301)},{4*cos(301)}) -- ({4*sin(359)},{4*cos(359)}) -- ({4*sin(1)},{4*cos(1)}) -- ({4*sin(299)},{4*cos(299)}) -- cycle;
    \draw (0,0) circle (4cm);
    \draw[fill=white] ({4*sin(0)},{4*cos(0)}) circle (0.1cm);
    \draw[fill=white] ({4*sin(60)},{4*cos(60)}) circle (0.1cm);
    \draw[fill=white] ({4*sin(120)},{4*cos(120)}) circle (0.1cm);
    \draw[fill=white] ({4*sin(180)},{4*cos(180)}) circle (0.1cm);
    \draw[fill=white] ({4*sin(240)},{4*cos(240)}) circle (0.1cm);
    \draw[fill=white] ({4*sin(300)},{4*cos(300)}) circle (0.1cm);
    \draw[thin] ({4*sin(106)},{4*cos(106)}) -- ({4*sin(286)},{4*cos(286)}) node[pos=0.5,above] {$\ell_A$};
    \draw[thin] ({4*sin(158)},{4*cos(158)}) .. controls (0,-3) .. ({4*sin(223)},{4*cos(223)}) node[pos=0.5,above] {$\ell_B$};
    \draw[thin] ({4*sin(30)},{4*cos(30)}) .. controls (0,3) .. ({4*sin(280)},{4*cos(280)}) node[pos=0.5,above] {$\ell_C$};
    \draw[thin] ({4*sin(317)},({4*cos(317)}) .. controls (-1.5,3.5) .. ({4*sin(345)},({4*cos(345)}) node[pos=0.5,below] {$\ell_D$};
    \node at (4.7,0) {$\mathscr{M}_{A \oplus C}$};
    \node at ({4.3*sin(210)},{4.3*cos(210)}) {$\mathscr{M}_B$};
    \node at ({4.3*sin(330)},({4.3*cos(330)}) {$\sM_D$};
    \node at ({4.3*sin(0)},{4.3*cos(0)}) {$a_2$};
    \node at ({4.3*sin(60)},{4.3*cos(60)}) {$a_1$};
    \node at ({4.3*sin(120)},{4.3*cos(120)}) {$a_6$};
    \node at ({4.3*sin(180)},{4.3*cos(180)}) {$a_5$};
    \node at ({4.3*sin(240)},{4.3*cos(240)}) {$a_4$};
    \node at ({4.3*sin(300)},{4.3*cos(300)}) {$a_3$};
    \end{tikzpicture}
    \caption{A representation of the thick subcategory $\mathcal{T}_2$ of $\mathcal{C}_6$ containing the objects $E \cong A \oplus B \oplus C \oplus D$. The indecomposable objects correspond to the arcs that are entirely contained in the shaded area, with non-exhaustive non-crossing partition $\mathcal{P}' = \{ \{1,3,6\},\{2\},\{4,5\}\}$.}
    \label{fig:thicker in c6}
\end{figure}

\subsection{Thick Subcategories of \texorpdfstring{$\ocC$}{oCn}}

We may use the classification of thick subcategories in $\mathcal{C}_{2n}$ to classify the thick subcategories in $\overline{\mathcal{C}}_n$ by using the localisation functor $\pi : \mathcal{C}_{2n} \rightarrow \overline{\mathcal{C}}_n$ from \cite{Paquette2020}.

\begin{proposition}\label{Prop:Thick Subcats}
    Let $\varphi : \mathcal{C} \rightarrow \overline{\mathcal{C}}$ be a localisation functor, and let $\mathcal{T}' \subseteq \overline{\mathcal{C}}$ be a thick subcategory.
    Then $\mathcal{T}'$ is equivalent to the essential image of some thick subcategory $\mathcal{T} \subseteq \mathcal{C}$.
\end{proposition}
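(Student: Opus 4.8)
The plan is to use the universal property of Verdier localisation together with the fact that $\mathcal{T}'$ is generated (as a thick subcategory) by its objects. First I would recall that a localisation functor $\varphi\colon\mathcal{C}\to\overline{\mathcal{C}}$ is (equivalent to) a Verdier quotient $\mathcal{C}\to\mathcal{C}/\mathcal{N}$ for $\mathcal{N}=\ker\varphi$, and in particular $\varphi$ is essentially surjective and every morphism in $\overline{\mathcal{C}}$ is a fraction of morphisms in $\mathcal{C}$. The key observation is then: define $\mathcal{T}\subseteq\mathcal{C}$ to be the full subcategory of objects $X$ with $\varphi(X)\in\mathcal{T}'$ (equivalently, the preimage $\varphi^{-1}(\mathcal{T}')$ closed up under the relevant operations). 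I would check that $\mathcal{T}$ is a thick subcategory of $\mathcal{C}$: it is closed under suspensions since $\varphi$ commutes with $\wb{1}$ and $\mathcal{T}'$ is; it is closed under cones because $\varphi$ sends a triangle to a triangle and $\mathcal{T}'$ is triangulated; and it is closed under direct summands because $\varphi$ preserves direct summands and $\mathcal{T}'$ is thick. Hence $\mathcal{T}$ is thick.

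Next I would show that the essential image $\varphi(\mathcal{T})$ is equivalent to $\mathcal{T}'$. The inclusion $\varphi(\mathcal{T})\subseteq\mathcal{T}'$ is immediate from the definition of $\mathcal{T}$. For the reverse inclusion, take any object $Y'\in\mathcal{T}'\subseteq\overline{\mathcal{C}}$; since $\varphi$ is essentially surjective there is $X\in\mathcal{C}$ with $\varphi(X)\cong Y'$, and then $\varphi(X)\in\mathcal{T}'$ forces $X\in\mathcal{T}$ by construction, so $Y'\cong\varphi(X)\in\varphi(\mathcal{T})$. Thus every object of $\mathcal{T}'$ is isomorphic to an object in the essential image of $\mathcal{T}$, which is exactly the statement that $\mathcal{T}'$ is equivalent to the essential image of the thick subcategory $\mathcal{T}\subseteq\mathcal{C}$.

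The only genuinely delicate point is verifying that $\mathcal{T}=\varphi^{-1}(\mathcal{T}')$ really is closed under direct summands: one needs that if $\varphi(X)\cong A'\oplus B'$ in $\overline{\mathcal{C}}$ with the summands in $\mathcal{T}'$, then $X$ itself (or at least the relevant summand of $X$, in case $\mathcal{C}$ is Krull–Schmidt) maps into $\mathcal{T}'$. Here the cleanest route is not to insist that $X$ decompose in $\mathcal{C}$, but to define $\mathcal{T}$ directly as the thick closure in $\mathcal{C}$ of $\varphi^{-1}(\mathcal{T}')$, or to note that for our categories $\mathcal{C}=\mathcal{C}_{2n}$ is Krull–Schmidt, so a summand $A'$ of $\varphi(X)$ lifts to a summand of $X$ after passing to indecomposables; either way the essential image is unchanged. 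I expect this summand-closure bookkeeping — making sure $\mathcal{T}$ is honestly thick and not merely triangulated — to be the main obstacle, and the rest to be a formal unwinding of the universal property of the Verdier quotient.

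\begin{proof}
Write $\varphi\colon\mathcal{C}\to\overline{\mathcal{C}}$ for the given localisation functor; it is, up to equivalence, the Verdier quotient $\mathcal{C}\to\mathcal{C}/\mathcal{N}$ where $\mathcal{N}=\ker\varphi$, and in particular $\varphi$ is a triangulated functor which is essentially surjective.

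Define
\[
\mathcal{T}:=\{\,X\in\mathcal{C}\mid \varphi(X)\in\mathcal{T}'\,\},
\]
the full subcategory of $\mathcal{C}$ on those objects sent into $\mathcal{T}'$. We claim $\mathcal{T}$ is a thick subcategory of $\mathcal{C}$. It contains the zero object and is closed under suspension and desuspension since $\varphi$ commutes with $\wb{1}$ and $\mathcal{T}'$ is closed under $\wb{\pm 1}$. If $X\to Y\to Z\to X\wb{1}$ is a triangle in $\mathcal{C}$ with $X,Y\in\mathcal{T}$, then applying $\varphi$ gives a triangle $\varphi(X)\to\varphi(Y)\to\varphi(Z)\to\varphi(X)\wb{1}$ in $\overline{\mathcal{C}}$ with the first two terms in $\mathcal{T}'$, so $\varphi(Z)\in\mathcal{T}'$ and hence $Z\in\mathcal{T}$; thus $\mathcal{T}$ is a triangulated subcategory. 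Finally, if $X\cong X_1\oplus X_2$ in $\mathcal{C}$ with $X\in\mathcal{T}$, then $\varphi(X)\cong\varphi(X_1)\oplus\varphi(X_2)$ lies in $\mathcal{T}'$, and since $\mathcal{T}'$ is thick it is closed under direct summands, so $\varphi(X_1),\varphi(X_2)\in\mathcal{T}'$, i.e.\ $X_1,X_2\in\mathcal{T}$. Hence $\mathcal{T}$ is thick.

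It remains to identify the essential image of $\mathcal{T}$ with $\mathcal{T}'$. By the definition of $\mathcal{T}$ we have $\varphi(X)\in\mathcal{T}'$ for every $X\in\mathcal{T}$, so the essential image of $\mathcal{T}$ is contained in $\mathcal{T}'$. Conversely, let $Y'\in\mathcal{T}'$. Since $\varphi$ is essentially surjective there is an object $X\in\mathcal{C}$ together with an isomorphism $\varphi(X)\cong Y'$ in $\overline{\mathcal{C}}$. Then $\varphi(X)\cong Y'\in\mathcal{T}'$, so $X\in\mathcal{T}$ by construction, and therefore $Y'\cong\varphi(X)$ lies in the essential image of $\mathcal{T}$. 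Thus every object of $\mathcal{T}'$ is isomorphic to the image of an object of $\mathcal{T}$, so $\mathcal{T}'$ is equivalent to the essential image of the thick subcategory $\mathcal{T}\subseteq\mathcal{C}$.
\end{proof}
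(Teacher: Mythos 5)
Your proof is correct, and it takes a genuinely different (and in fact slicker) route than the paper. The paper starts from the subcategory $\mathcal{D} = \{X \in \mathcal{C} \mid 0 \not\cong \varphi(X) \in \mathcal{T}'\}$ of objects with \emph{nonzero} image in $\mathcal{T}'$, defines $\mathcal{T}$ as the thick closure of $\mathcal{D}$, and then must argue via a triangle analysis that the thick closure only adds objects whose image is zero, so that $\varphi(\mathcal{T}) \simeq \mathcal{T}'$ still holds. You instead take the full preimage $\mathcal{T} = \varphi^{-1}(\mathcal{T}')$, which is thick for purely formal reasons (closure under shifts, cones, and summands all pull back along a triangulated functor, exactly as you verify — note your summand argument needs no Krull--Schmidt input, since a summand of $X$ in $\mathcal{C}$ maps to a summand of $\varphi(X)$ in $\mathcal{T}'$), and the identification of the essential image with $\mathcal{T}'$ is then immediate from essential surjectivity of the quotient functor (the paper phrases this as $\varphi$ being ``an identity on objects''). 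What your approach buys is that there is nothing left to check about what a thick closure might add; what the paper's approach buys is a smaller subcategory $\mathcal{T}$ that meets the kernel of $\varphi$ only in objects forced on it by thickness, whereas your $\mathcal{T}$ contains all of $\ker\varphi$ — which is harmless for the statement, since only the essential image matters. The only point worth flagging is that your argument uses essential surjectivity of $\varphi$; this is automatic for the Verdier localisation $\pi\colon\mathcal{C}_{2n}\to\overline{\mathcal{C}}_n$ of Construction \ref{Cons:Verdier Localisation}, which is the only case the paper uses, so the hypothesis is satisfied where it matters.
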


\begin{proof}
    Let $\mathcal{D} \subseteq \mathcal{C}$ be the subcategory defined as follows
    \[
    \mathcal{D} := \{ X \in \mathcal{C} \; \vline \; 0 \not\cong \varphi(X) \in \mathcal{T}'\},
    \]
    and let $\mathcal{T}$ be the thick closure of $\mathcal{D}$.
    We show that $\varphi(\mathcal{T}) \simeq \mathcal{T}'$.

    As $\varphi$ is an identity on objects, and $\mathcal{T}'$ is a thick subcategory then $\mathcal{D}$ is closed under direct summands.
    Now let $X,Y \in \mathcal{D}$ such that there exists a triangle 
    \[
    X \rightarrow Z \oplus Z' \rightarrow Y \rightarrow Z\wlb 1 \wrb
    \]
    with $Z \in \mathcal{T}$ but not in $\mathcal{D}$, and $Z' \in \mathcal{D}$.
    The localisation functor $\varphi$ is triangulated and therefore induces a triangle in $\overline{\mathcal{C}}_n$,
    \[
    \varphi(X) \rightarrow \varphi(Z) \oplus \varphi(Z') \rightarrow \varphi(Y) \rightarrow \varphi(X\wlb 1 \wrb).
    \]
    However, as $\varphi(X),\varphi(Y), \varphi(Z') \in \mathcal{T}'$ but $\varphi(Z) \not\in \mathcal{T}'\backslash \{0\}$, then $\varphi(Z)\cong 0$ as $\mathcal{T}'$ is thick.

    Therefore any object in $\mathcal{T}$ but not in $\mathcal{D}$ is isomorphic to a zero object in $\overline{\mathcal{C}}$, and so $\varphi(\mathcal{T}) \simeq \mathcal{T}'$.
\end{proof}

The thick subcategories of $\mathcal{C}_6$ from Figures \ref{fig:thick in c6} and \ref{fig:thicker in c6} respectively induce the following thick subcategories in $\overline{\mathcal{C}}_3$.

\begin{figure}[H]
    \centering
    \begin{tikzpicture}
        \draw (0,0) circle (4cm);
        \draw[fill=black, opacity=0.2] (0,0) circle (4cm);
        \path[fill=white] ({4*sin(89.5)},{4*cos(89.5)}) -- ({4*sin(328)},{4*cos(328)}) -- ({4*sin(329)},{4*cos(329)}) -- ({4*sin(88.5)},{4*cos(88.5)}) -- cycle;
        \path[fill=white] ({4*sin(90.5)},{4*cos(90.5)}) -- ({4*sin(212)},{4*cos(212)}) -- ({4*sin(211)},{4*cos(211)}) -- ({4*sin(91.5)},{4*cos(91.5)}) -- cycle;
        \draw[fill=black, opacity=0.2] ({4*sin(90)},{4*cos(90)}) circle (0.1cm);
        \draw[fill=black, opacity=0.2] ({4*sin(210)},{4*cos(210)}) circle (0.1cm);
        \draw[fill=black, opacity=0.2] ({4*sin(330)},{4*cos(330)}) circle (0.1cm);
        \draw ({4*sin(330)},{4*cos(330)}) circle (0.1cm);
        \draw ({4*sin(210)},{4*cos(210)}) circle (0.1cm);
        \draw ({4*sin(90)},{4*cos(90)}) circle (0.1cm);
        \draw[thin] ({3.9*sin(90)},{3.9*cos(90)}) -- ({4*sin(254)},{4*cos(254)}) node[pos=0.5,above] {$\ell_{\pi(X)}$};
    \draw[thin] ({4*sin(158)},{4*cos(158)}) .. controls (0,-3) .. ({3.9*sin(210)},{3.9*cos(210)}) node[pos=0.5,above] {$\ell_{\pi(Y)}$};
    \draw[thin] ({3.9*sin(330)},{3.9*cos(330)}) .. controls (0,3) .. ({4*sin(30)},{4*cos(30)}) node[pos=0.5,below] {$\ell_{\pi(Z)}$};
    \node at ({4.4*sin(90)},{4.4*cos(90)}) {$a_3$};
    \node at ({4.4*sin(210)},{4.4*cos(210)}) {$a_2$};
    \node at ({4.4*sin(330)},{4.4*cos(330)}) {$a_1$};
    \end{tikzpicture}
    \caption{The thick subcategory $\mathcal{T}'_1$ of $\overline{\mathcal{C}}_3$ that is equivalent to $\pi(\mathcal{T}_1)$. Again, indecomposable objects in $\mathcal{T}'_1$ are in correspondence with the arcs entirely contained in the shaded area.}
    \label{fig:thick in C3 bar}
\end{figure}
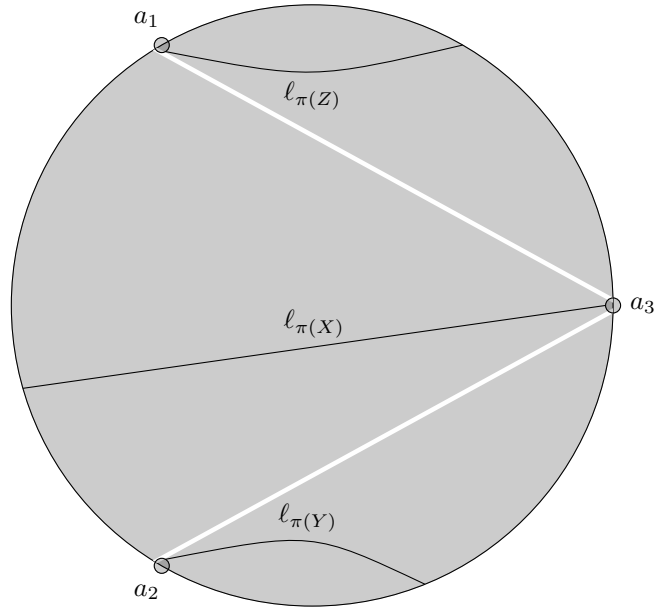

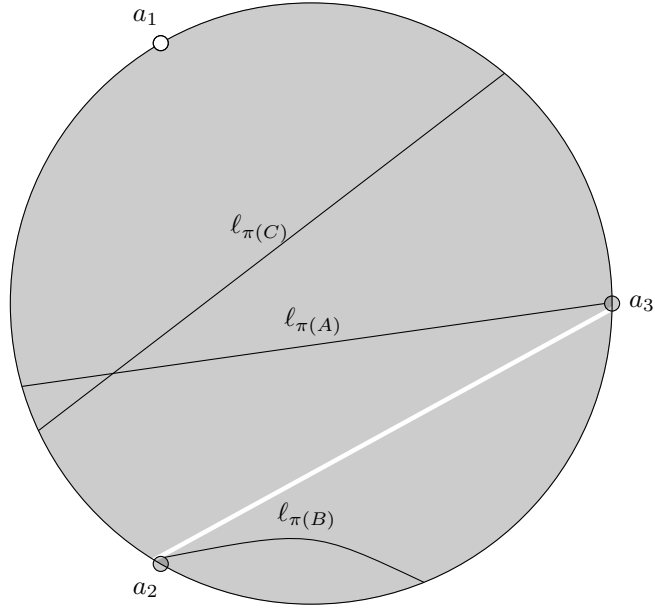
\begin{figure}[H]
    \centering
    \begin{tikzpicture}
        \draw[fill=black, opacity=0.2] (0,0) circle (4cm);
        \path[fill=white] ({4*sin(90.5)},{4*cos(90.5)}) -- ({4*sin(212)},{4*cos(212)}) -- ({4*sin(211)},{4*cos(211)}) -- ({4*sin(91.5)},{4*cos(91.5)}) -- cycle;
        \draw (0,0) circle (4cm);
        \draw[fill=black, opacity=0.2] ({4*sin(90)},{4*cos(90)}) circle (0.1cm);
        \draw[fill=black, opacity=0.2] ({4*sin(210)},{4*cos(210)}) circle (0.1cm);
        \draw[fill=white] ({4*sin(330)},{4*cos(330)}) circle (0.1cm);
        \draw ({4*sin(330)},{4*cos(330)}) circle (0.1cm);
        \draw ({4*sin(210)},{4*cos(210)}) circle (0.1cm);
        \draw ({4*sin(90)},{4*cos(90)}) circle (0.1cm);
        \draw[thin] ({3.9*sin(90)},{3.9*cos(90)}) -- ({4*sin(254)},{4*cos(254)}) node[pos=0.5,above] {$\ell_{\pi(A)}$};
    \draw[thin] ({4*sin(158)},{4*cos(158)}) .. controls (0,-3) .. ({3.9*sin(210)},{3.9*cos(210)}) node[pos=0.5,above] {$\ell_{\pi(B)}$};
    \draw[thin] ({4*sin(245)},{4*cos(245)}) -- ({4*sin(40)},{4*cos(40)}) node[pos=0.5,above] {$\ell_{\pi(C)}\:\:\:\:$};
    \node at ({4.4*sin(90)},{4.4*cos(90)}) {$a_3$};
    \node at ({4.4*sin(210)},{4.4*cos(210)}) {$a_2$};
    \node at ({4.4*sin(330)},{4.4*cos(330)}) {$a_1$};
    \end{tikzpicture}
    \caption{The thick subcategory $\mathcal{T}'_2$ of $\overline{\mathcal{C}}_3$ that is equivalent to $\pi(\mathcal{T}_2)$.}
    \label{fig:thicker in C3 bar}
\end{figure}

Recall from Definition \ref{Def:Orbits} that an object $X \in \ocC$ has an orbit in $\osM$ denoted by $\osM[X]$ (more specifically, $\ell_X$ has an orbit in $\osM$), which corresponds to the union of segments and accumulation points containing an endpoint of an arc corresponding to a direct summand of $X$.
Also recall that $X$ (again, specifically $\ell_X$) has a complete orbit in $\osM$ if $\osM[X] = \osM$.
Here, we classify the thick subcategories of $\ocC$ in terms of orbits of homologically connected objects.

\begin{lemma}\label{Prop:Mx in Mg}
Let $G$ be an object in $\ocC$, with hc decomposition $G \cong \bigoplus_{i \in I} G_i$.
Then an indecomposable object $X \in \ocC$ is in $\lang{G}$ if and only if $\osM[X] \subseteq \osM[G_i]$ for some $i \in I$.
That is, $\lang{G}$ is completely determined by the disjoint union $\bigsqcup_{i \in I} \osM[G_i]$.
\end{lemma}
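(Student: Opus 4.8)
The plan is to prove the two implications separately. The ``if'' direction will be a construction of $X$ inside $\langle G\rangle$ out of summands of a single $G_i$, using homological connectivity together with the zig-zag reduction already developed; the ``only if'' direction will use that the thick closures $\langle G_i\rangle$ are mutually orthogonal.

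For the ``if'' direction, suppose $\osM[X]\subseteq\osM[G_i]$ and write $\ell_X=\{x_1,x_2\}$. Since $x_1\in\osM[G_i]$, some suspension of some indecomposable summand of $G_i$ has $x_1$ as an endpoint, so there is $P\in\langle G_i\rangle_1$ with $\ell_P=\{x_1,p\}$; likewise there is $Q\in\langle G_i\rangle_1$ with $\ell_Q=\{x_2,q\}$. By Proposition~\ref{Prop:all X hom con} the object $G_i$ is homologically connected, so there is a minimal zig-zag from $P$ to $Q$ inside $\langle G_i\rangle_1$, say of length $d$. Applying Lemma~\ref{Lem:Reduce minimal} to it --- with the endpoints of $P$ labelled so that $x_1$ is the endpoint retained by the reduction --- produces $M\in\langle G_i\rangle_d$ with $\ell_M=\{x_1,z\}$ for some $z$, together with a minimal zig-zag of length $1$ from $M$ to $Q$. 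That single remaining morphism of degree $1$ forces, by the description of the $\mathrm{Hom}$-spaces of $\ocC$, that $\ell_M$ and $\ell_Q$ either cross or are limit arcs sharing an accumulation point. If they cross, then the two distinguished triangles attached to the crossing have among the summands of their middle terms all four ``reconnection'' arcs of $\ell_M$ and $\ell_Q$, one of which is $\{x_1,x_2\}=\ell_X$; since those middle terms lie in $\langle G_i\rangle_d\star\langle G_i\rangle_1=\langle G_i\rangle_{d+1}$ we obtain $X\in\langle G_i\rangle_{d+1}\subseteq\langle G\rangle$. If instead they share an accumulation point, then (unless $M\cong X$ or $Q\cong X$, in which case there is nothing to prove) the triangle $Q\to Z\to M\to Q\wb{1}$, or its analogue with the roles reversed, has $\ell_Z=\{x_1,x_2\}=\ell_X$, so again $X\in\langle G_i\rangle_{d+1}$.

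For the ``only if'' direction, I would first record that $\langle G_i\rangle$ and $\langle G_j\rangle$ are completely orthogonal when $i\neq j$. Because $G_i\oplus G_j$ is not homologically connected, no arc of a suspension of an indecomposable summand of $G_i$ crosses an arc of a suspension of an indecomposable summand of $G_j$, nor do any two such arcs share an accumulation point in the way that produces a morphism of degree $1$; by the $\mathrm{Hom}$-formula all the groups $\mathrm{Ext}^{k}$ between $\langle G_i\rangle_1$ and $\langle G_j\rangle_1$ vanish, and this propagates to $\langle G_i\rangle$ and $\langle G_j\rangle$ by the usual induction along the filtrations $(\langle G_i\rangle_n)_n$ and $(\langle G_j\rangle_n)_n$. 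Hence every non-zero indecomposable of $\langle G\rangle$ lies in exactly one $\langle G_i\rangle$, every morphism between objects of $\bigoplus_i\langle G_i\rangle$ is block diagonal, and therefore $\mathrm{add}\bigl(\bigcup_i\langle G_i\rangle\bigr)$ is a thick subcategory; being squeezed between $G$ and $\langle G\rangle$ it equals $\langle G\rangle$. It then suffices to show $\osM[Y]\subseteq\osM[G_i]$ for every indecomposable $Y\in\langle G_i\rangle$, and for this I would check that $\mathcal{V}_i:=\mathrm{add}\{Y\ \text{indecomposable}\mid\osM[Y]\subseteq\osM[G_i]\}$ is a thick subcategory containing $G_i$: it is visibly closed under suspension and summands, and for closure under extensions one reduces by the octahedral axiom to an extension of an indecomposable by an indecomposable, where either the extension splits or the connecting morphism is non-zero and the middle term is a sum of reconnection arcs, whose endpoints lie among the endpoints of the two given arcs and hence, $\osM[G_i]$ being rotation invariant, inside $\osM[G_i]$. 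Combining the two implications yields the equivalence and exhibits $\langle G\rangle$ as the additive closure of $\{X\ \text{indecomposable}\mid\osM[X]\subseteq\osM[G_i]\ \text{for some}\ i\in I\}$, which depends only on the family $\{\osM[G_i]\}_{i\in I}$ (these sets meeting at most in accumulation points, by the non-crossing forced above).

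The step I expect to be most delicate is the construction in the ``if'' direction: one must verify that the reduction of Lemma~\ref{Lem:Reduce minimal} can indeed be arranged to retain the chosen endpoint $x_1$, and one must carry the four arc types --- short, long, limit, double limit --- through the argument, since for (double) limit arcs the ``shared accumulation point'' alternative replaces the crossing alternative and the corresponding triangle contributes a single arc rather than a pair. A shorter but less self-contained route would instead transfer Lemma~\ref{Lem:Thick objects} for $\cC_{2n}$ along the localisation $\pi\colon\cC_{2n}\to\ocC$ by way of Proposition~\ref{Prop:Thick Subcats}; the cost there is a careful description of how $\pi$ sends the orbit of an object of $\cC_{2n}$ to an orbit in $\osM$, which is itself somewhat involved for the (double) limit arcs.
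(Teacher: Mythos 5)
Your proposal is correct in outline, but it takes a genuinely different route from the paper. The paper proves this lemma in a few lines by transfer along the localisation: $\lang{G}$ is lifted to a thick subcategory of $\cC_{2n}$ via Proposition \ref{Prop:Thick Subcats}, identified with $\lang{F}$ for some $F$ with $\pi(F)\cong G$ via Lemma \ref{Lem:Thick objects}, and membership is then read off from Theorem \ref{Thm:thick in cn} and pushed forward through $\pi$ --- precisely the ``shorter but less self-contained route'' you mention at the end. Your argument instead stays inside $\ocC$: the ``if'' direction builds $X$ from a reduced zig-zag exactly as the paper later does in Lemma \ref{Lem:GenTime} (so your reliance on the reduction of Lemma \ref{Lem:Reduce minimal} retaining the chosen endpoint $x_1$, which you rightly flag as delicate, is no worse than the paper's own use of $\ell_{M_i}=\{x_1,z_i\}$ there), and the ``only if'' direction re-derives, via orthogonality of the blocks and thickness of the orbit-defined subcategories $\mathcal{V}_i$, a special case of what the paper imports wholesale from the $\cC_{2n}$ classification. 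What your route buys is self-containedness and it sidesteps the point the paper glosses over, namely how orbits behave under $\pi$ (including the (double) limit arcs); what it costs is the extra verification that $\mathcal{V}_i$ is closed under cones, which you only sketch via the octahedral reduction to extensions of indecomposables, and which would need to be written out (it does go through, since middle terms of extensions in $\ocC$ have endpoints among the endpoints of the two given arcs and $\osM[G_i]$ is a rotation-invariant union of segments and accumulation points). One small correction: homological connectivity of the block $G_i$ comes from the hc decomposition itself (Lemma \ref{Lem:decomposition}), not from Proposition \ref{Prop:all X hom con}, which concerns indecomposables and is only the hypothesis making that decomposition available.
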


\begin{proof}
The subcategory $\lang{G}$ is a thick subcategory of $\ocC$, and so by Proposition \ref{Prop:Thick Subcats} there exists a thick subcategory $\cT$ of $\cC_{2n}$ such that $\pi(\mathcal{T}) \simeq \lang{G}$.
Moreover, $\mathcal{T} \simeq \langle F \rangle$ for some object $F \in \mathcal{C}_{2n}$ by Lemma \ref{Lem:Thick objects}, where $\pi(F) \cong G$.
Therefore an indecomposable object $U \in \mathcal{C}_{2n}$ is in $\mathcal{T}$ if and only if $\ell_U$ has both endpoints in $\sM_{F_j}$, for some $F_j$ in the hc decomposition of $F$, by Theorem \ref{Thm:thick in cn} and Lemma \ref{Lem:Thick objects}.
Hence $\ell_{\pi(U)}$ has both endpoints in $\osM[\pi(F_j)] \subseteq \osM[G_i]$ for some $i \in I$ if and only if $\pi(U) \in \langle G \rangle$.
\end{proof}

\begin{definition}
    Let $\cP = \{ B_m \subseteq [n] \mid m \in I \}$ be a non-exhaustive non-crossing partition of $[n]$.
    We say that $\cP$ is an \textit{even-exclusive non-exhaustive non-crossing partition of} $[n]$ if $\{i\} \notin \cP$ for all even $i \in [n]$.
    The set of even-exclusive non-exhaustive non-crossing partitions of $[n]$ is denoted $eNNC_n$.
\end{definition}

Each even-exclusive non-exhaustive non-crossing partition of $[n]$ is by definition a non-exhaustive non-crossing partition of $[n]$.
However, we may not regard $eNNC_n$ as a sublattice of $NNC_n$ as it is not closed under meets.

\begin{example}
    The partitions $\cP_1 =\{1,2,3\}$ and $\cP_2 = \{2,4\}$ are both in $eNNC_4$, and so by extension are partitions in $NNC_4$.
    Then the meet in $NNC_4$ is given by $\cP_1 \wedge \cP_2 = \{2\}$, however $\{2\} \not\in eNNC_4$, and so cannot be the meet of $\cP_1$ and $\cP_2$ in $eNNC_4$.
\end{example}

Instead, a lattice structure may be defined on $eNNC_n$ in the following way.
Let $\eta : NNC_n \rightarrow eNNC_n$ be the map taking a partition $\cP =\{B_{m_i} \subseteq [n] \mid i \in I\} \in NNC_n$ to a partition $\cP' \in eNNC_n$, such that $\cP'=\{B_{m_i} \in \cP \mid B_{m_i} \neq \{j\} \,\text{for some even}\, j \in [n]\}$.
Then we define the meet, $\wedge_e$, in $eNNC_n$ to be the operation satisfying 
\[
\eta (\cP_1 \wedge \cP_2) := \eta (\cP_1) \wedge_e \eta (\cP_2).
\]
The join of $eNNC_n$ is the same join as in $NNC_n$, that is
\[
\cP \vee_e \cP' = \cP \vee \cP'.
\]
We denote the partial ordering of the lattice by $<_e$.

As with non-exhaustive non-crossing partitions and thick subcategories of $\cC$ in \cite{GZ2021}, we may associate a thick subcategory of $\ocC$ to each even-exclusive non-exhaustive non-crossing partition of $[2n]$.
Let $\cP \in eNNC_{2n}$, then
\begin{align*}
\lang{\cP} =& \mathrm{add}\{X \in \ocC \mid \ell_{X} = \{x,y\},\, \text{for some} \, m \in I,\\
&x,y \in \bigcup_{p\, \text{odd} \,\in B_m} (a_{(p-1)/2},a_{(p+1)/2}) \cup \bigcup _{p\, \text{even}\, \in B_m} a_{p/2}\}.
\end{align*}

Let $i \in \cP$, then the set of marked points $\{ x \in (a_{(i-1)/2},a_{(i+1)/2})\}$ if $i$ is odd, or $\{a_{i/2}\}$ if $i$ is even, is called the \textit{orbit of} $i$.

\begin{lemma}
    Let $\cP,\cQ \in eNNC_{2n}$, then $\lang{\cP} = \lang{\cQ} \subset \ocC$ if and only if $\cP = \cQ$ up to relabelling of blocks.
\end{lemma}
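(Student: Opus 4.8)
The plan is to prove both directions, with the "if" direction being essentially trivial and the "only if" direction being the substantive one. For the "if" direction, note that if $\cP = \cQ$ up to relabelling of blocks, then the defining formula for $\lang{\cP}$ — which depends only on the set of blocks, not on any labelling — gives the same additive subcategory, so $\lang{\cP} = \lang{\cQ}$ immediately.

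For the "only if" direction, suppose $\lang{\cP} = \lang{\cQ}$; I want to recover the blocks of $\cP$ from the subcategory $\lang{\cP}$. The key observation is that each block $B_m$ of $\cP$ contributes the indecomposable objects whose arcs have both endpoints in the orbit of $B_m$, i.e.\ in $\bigcup_{p\ \mathrm{odd}\in B_m}(a_{(p-1)/2},a_{(p+1)/2}) \cup \bigcup_{p\ \mathrm{even}\in B_m} a_{p/2}$, and — crucially — these indecomposables are all homologically connected when assembled into a single object, whereas objects living over different blocks are not. I would therefore argue as follows. First, the set of indices $i \in [2n]$ such that some indecomposable in $\lang{\cP}$ has an arc with an endpoint in the orbit of $i$ is exactly $\bigcup_{m\in I} B_m$, so $\lang{\cP} = \lang{\cQ}$ forces $\cP$ and $\cQ$ to have the same union of blocks. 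Next, I would show that two indices $i,j$ lie in the same block of $\cP$ if and only if there is an indecomposable object $X \in \lang{\cP}$ whose arc $\ell_X$ has one endpoint in the orbit of $i$ and one endpoint in the orbit of $j$; more generally, $i$ and $j$ lie in the same block if and only if there is a homologically connected object in $\lang{\cP}$ whose orbit in $\osM$ meets both the orbit of $i$ and the orbit of $j$. One direction here uses that within a block $B_m$ the non-crossing condition together with homological connectedness lets us find a zig-zag (this is the content already extracted from Lemma~\ref{Lem:Thick objects} and the combinatorics of crossing arcs); the other direction uses that arcs with endpoints in orbits of indices from two distinct blocks $B_{m_1}\neq B_{m_2}$ of a non-crossing partition cannot themselves be arcs of $\sM$ over a single block and, via Lemma~\ref{Prop:Mx in Mg}, are not in $\lang{\cP}$. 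Hence the partition of $\bigcup B_m$ into blocks is recovered as an intrinsic equivalence relation on this index set determined by $\lang{\cP}$ alone, so $\cP = \cQ$ up to relabelling.

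The even-exclusivity hypothesis enters precisely to make this recovery unambiguous: a singleton block $\{i\}$ with $i$ even corresponds to the orbit $\{a_{i/2}\}$, a single accumulation point, and there is no arc of $\osM$ with both endpoints equal to $a_{i/2}$, so such a block contributes no indecomposable objects to $\lang{\cP}$ and is invisible — which is exactly why $eNNC_{2n}$ forbids it. Since both $\cP$ and $\cQ$ are even-exclusive, every block they contain is genuinely witnessed by objects of $\lang{\cP} = \lang{\cQ}$, and the argument above pins them down.

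The main obstacle I anticipate is the "same block $\Leftrightarrow$ connected by objects of $\lang{\cP}$" equivalence, specifically showing that indices from \emph{distinct} blocks cannot be joined by an indecomposable (or homologically connected) object of $\lang{\cP}$. Here one must carefully use that $\cP$ is non-crossing: although an arc with endpoints in the orbits of $i \in B_{m_1}$ and $j \in B_{m_2}$ certainly \emph{exists} as an arc of $\osM$, Lemma~\ref{Prop:Mx in Mg} (together with Lemma~\ref{Lem:Thick objects} applied in $\cC_{2n}$ via the localisation $\pi$) says it lies in $\lang{\cP}$ only if both endpoints lie in a single $\osM[G_i]$, hence over a single block; one then has to check this is incompatible with $i,j$ being separated by the non-crossing structure. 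I would handle this by the same crossing-arc tracing argument used in the proof of Lemma~\ref{Lem:Thick objects}, run in reverse: if such an object were homologically connected inside $\lang{\cP}$, its zig-zag would force a crossing between an arc over $B_{m_1}$ and an arc over $B_{m_2}$, contradicting the non-crossing partition property.
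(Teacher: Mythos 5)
Your proposal is correct and follows essentially the same route as the paper: first recover the underlying index set $\bigcup_m B_m$ from $\lang{\cP}=\lang{\cQ}$ (with even-exclusivity guaranteeing every block is witnessed by some indecomposable), then show the blocks coincide by exhibiting an indecomposable whose arc joins the orbits of $i$ and $j$ lying in one block of $\cP$, which could not lie in $\lang{\cQ}$ if $i,j$ were split between distinct blocks. The only difference is that the separation step needs none of the machinery you invoke (homological connectedness, Lemma \ref{Prop:Mx in Mg}, the localisation, or crossing-arc tracing): since the orbits of distinct indices are disjoint and the blocks of $\cQ$ are disjoint, the definition of $\lang{\cQ}$ alone already rules out such an indecomposable, which is exactly the paper's one-line argument.
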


\begin{proof}
The statement that $\lang{\cP} = \lang{\cQ}$ if $\cP = \cQ$ follows from the definition of $\lang{\cS}$ for some $\cS \in eNNC_{2n}$.

    Now suppose $\lang{\cP} = \lang{\cQ}$, and further suppose there exists some $i \in [2n]$ such that $i \in \cP$ but $i \neq \cQ$.
    Then there exists an indecomposable object in $\lang{\cP}$ with an endpoint in the orbit of $i$, but not in $\lang{\cQ}$, a contradiction.
    Therefore $\cP$ and $\cQ$ must be equal as subsets of $[2n]$.

    Now let $i,j \in \cP,\cQ$, such that $i,j \in B_p \subseteq \cP$, but $i \in B_q$ and $j \in B_{q'}$ for $B_q \neq B_{q'} \subset \cQ$.
    There exists some indecomposable object $X \in \ocC$ such that $X$ has an endpoint in the orbit $i$, and the other endpoint in the orbit of $j$.
    Then $X \in \lang{\cP}$ but $X \neq \lang{\cQ}$, a contradiction as $\lang{\cP} = \lang{\cQ}$.
    Hence if $i,j \in B_p \subseteq \cP$, then $i,j \in B_q \subseteq \cQ$, and so $\cP = \cQ$ up to reordering.
\end{proof}

\begin{lemma}\label{Lem:Objects and Partitions 2}
    Let $F \in \ocC$ be a object, and let $F \cong \bigoplus_{i \in I} F_i$ be the hc decomposition of $F$.
    Let $\{B_{m_i} \mid i \in I \}$ be a collection of subsets of $[2n]$ such that $\osM[F_i] = \bigcup_{p\, \text{odd} \, \in B_{m_i}} (a_{(p-1)/2},a_{(p+1)/2}) \cup \bigcup _{p\, \text{even}\, \in B_{m_i}} a_{p/2}$ for all $i \in I$, where $p \in \mathbb{Z}/ n\mathbb{Z}$.
    
    Then $\mathcal{P} = \{ B_{m_i} \mid i \in I\}$ is an even-exclusive non-exhaustive non-crossing partition of $[2n]$.
    Moreover, $\langle F \rangle$ is equivalent to $\langle \mathcal{P} \rangle$.
\end{lemma}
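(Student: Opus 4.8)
The statement is the exact $\ocC$-analogue of Lemma \ref{Lem:Thick objects}, so the plan is to follow that proof closely, making the two adjustments forced by the completed setting: orbits now live in $\osM$ rather than $\sM$ (so endpoints at accumulation points must be tracked), and the resulting partition must be shown to be even-exclusive. First I would prove that $\cP = \{B_{m_i} \mid i \in I\}$ is a non-exhaustive non-crossing partition of $[2n]$, arguing exactly as in Lemma \ref{Lem:Thick objects}: for $i \neq j$ in $I$, the objects $F_i, F_j$ are distinct summands of a hc decomposition, so there is no degree-$1$ morphism between indecomposables of $\lang[1]{F_i}$ and $\lang[1]{F_j}$, whence $\osM[F_i] \cap \osM[F_j] = \emptyset$ and thus $B_{m_i} \cap B_{m_j} = 0$. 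For the non-crossing condition, suppose $e < g < f < h$ in $[2n]$ with $e,f \in B_{m_i}$ and $g,h \in B_{m_j}$; using that $F_i, F_j$ are homologically connected, choose zig-zags inside $\lang[1]{F_i}$ and $\lang[1]{F_j}$ reaching the orbits of $e,f$ and of $g,h$ respectively, build composite arcs $\ell_X = \{x, x'\}$ and $\ell_Y = \{y,y'\}$ tracing these zig-zags, observe that the cyclic inequality $x < y < x' < y' < x$ forces $\ell_X$ and $\ell_Y$ to cross, hence some consecutive pair $\ell_{X_{s'}}, \ell_{Y_{t'}}$ crosses, giving a degree-$1$ morphism between $\lang[1]{F_i}$ and $\lang[1]{F_j}$ — contradicting that $F_i \oplus F_j$ is not homologically connected unless $i = j$.

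The genuinely new point is even-exclusivity: I must show $\{i\} \notin \cP$ for every even $i$. Suppose $\{i\} \in \cP$ with $i$ even, so $B_{m_j} = \{i\}$ for some $j$ and $\osM[F_j] = \{a_{i/2}\}$, a single accumulation point. Then every indecomposable summand of $F_j$, and every suspension thereof, corresponds to an arc whose endpoints both lie in $\{a_{i/2}\}$; but an arc needs two \emph{distinct} endpoints, so the only possibility is that $F_j$ consists of double limit arcs with both endpoints equal to $a_{i/2}$ — which is impossible since an arc $\{x_1,x_2\}$ requires $x_1 \ne x_2$. More carefully: the only arcs with an endpoint at $a_{i/2}$ that could have $\osM$-orbit contained in $\{a_{i/2}\}$ would need their second endpoint also in $\{a_{i/2}\}$, which no genuine arc satisfies; hence $\osM[F_j]$ cannot equal $\{a_{i/2}\}$ for any nonzero $F_j$, so such a singleton block never arises. (Here I use the definition of arc, and the description of orbits in $\osM$ recalled before the lemma.) This is the step I expect to require the most care, since it is where the combinatorics genuinely differs from the $\cC_n$ case — one must rule out that a homologically connected summand could be "pinned" at a single accumulation point, and the argument hinges on the fact that limit and double-limit arcs always carry a second, distinct endpoint into the orbit.

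Finally, for the equivalence $\lang{F} \simeq \lang{\cP}$, I would argue as in Lemma \ref{Lem:Thick objects} together with Lemma \ref{Prop:Mx in Mg}: $\lang{F}$ is a thick subcategory, so by Proposition \ref{Prop:Thick Subcats} and Construction \ref{Cons:Verdier Localisation} it equals $\pi(\lang{E})$ for some $E \in \cC_{2n}$ with $\pi(E) \cong F$, and by Lemma \ref{Prop:Mx in Mg} an indecomposable $X$ lies in $\lang{F}$ iff $\osM[X] \subseteq \osM[F_i]$ for some $i$, i.e. iff both endpoints of $\ell_X$ lie in the orbit-union prescribed by $B_{m_i}$ — which is precisely the membership condition defining $\lang{\cP}$. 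If $\lang{\cP'} \subsetneq \lang{\cP}$ for the partition $\cP'$ attached to $\lang{F}$ as a thick subcategory, there would be some orbit of an index $p$ present in $\cP$ but not $\cP'$, yet $F$ has an indecomposable summand with an endpoint in that orbit by hypothesis, so $F \notin \lang{\cP'}$ — contradiction. Hence $\lang{F} \simeq \lang{\cP}$.
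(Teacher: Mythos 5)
Your proposal is correct and follows essentially the same route as the paper: the paper simply cites Lemma \ref{Lem:Thick objects} for the non-exhaustive non-crossing property, rules out even singleton blocks exactly as you do (an orbit in $\osM$ cannot be a single accumulation point, since every arc has two distinct endpoints), and leaves the equivalence $\lang{F}\simeq\lang{\cP}$ to the same orbit considerations you make explicit via Proposition \ref{Prop:Thick Subcats} and Lemma \ref{Prop:Mx in Mg}. Your write-up is merely more detailed, in particular in adapting the zig-zag/crossing argument to degree-one morphisms coming from shared endpoints at accumulation points, which the paper glosses over.
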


\begin{proof}
Lemma \ref{Lem:Thick objects} shows that $\cP$ is a non-exhaustive non-crossing partition of $[2n]$.
    To see that $\cP$ is an even-exclusive non-exhaustive non-crossing partition of $[2n]$, notice that if $\{l\} \in \cP$ for $l$ even, then there exists some object $F_j$ such that $\osM[F_j] = a_j$.
    However, no object may have such an orbit in $\osM$ as it consists of a single marked point, hence $\{l\}$ cannot be in $\cP$ if $l$ is even.
\end{proof}

We now show that thick subcategories of $\ocC$ and even-exclusive non-exhaustive non-crossing partitions of $[n]$ are in bijection to each other.

\begin{theorem}\label{Thm:Lattices}
    There is an isomorphism of lattices,
    \[
    eNNC_{2n} \cong \mathrm{thick}(\ocC).
    \]
    Under this isomorphism, $\cP \in eNNC_{2n}$ is sent to the thick subcategory $\lang{\cP}$.
    Moreover, there is a commutative diagram of lattices 
    \[
    \begin{tikzcd}
        NNC_{2n} \arrow[d,"\eta",swap] \arrow[rr,"\sim"] && \mathrm{thick}(\cC_{2n}) \arrow[d,"\pi",swap]\\
        eNNC_{2n}  \arrow[rr,"\sim"] && \mathrm{thick}(\ocC) \\
        NNC_n \arrow[rr,"\sim"] \arrow[u,"\zeta"] && \mathrm{thick}(\cC_n) \arrow[u,"\xi"].
    \end{tikzcd}
    \]
\end{theorem}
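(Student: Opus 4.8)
The plan is to build the isomorphism $eNNC_{2n} \cong \mathrm{thick}(\ocC)$ by combining the previously established machinery: Theorem~\ref{Thm:thick in cn} (the isomorphism $NNC_{2n} \cong \mathrm{thick}(\cC_{2n})$), Proposition~\ref{Prop:Thick Subcats} (every thick subcategory of $\ocC$ is the essential image of a thick subcategory of $\cC_{2n}$ under $\pi$), Lemma~\ref{Lem:Thick objects} (objects of $\cC_{2n}$ correspond to partitions in $NNC_{2n}$ via hc decomposition), and Lemma~\ref{Lem:Objects and Partitions 2} (the same for $\ocC$, landing in $eNNC_{2n}$). Concretely, I would define the map $eNNC_{2n} \to \mathrm{thick}(\ocC)$ by $\cP \mapsto \lang{\cP}$ and the map in the other direction by sending a thick subcategory $\cT'$ to the partition associated, via Lemma~\ref{Lem:Objects and Partitions 2}, to any object $F$ with $\lang{F} = \cT'$ (such an $F$ exists since $\cT' = \pi(\langle F \rangle)$ for some $F$ in $\cC_{2n}$ by Proposition~\ref{Prop:Thick Subcats} and Lemma~\ref{Lem:Thick objects}).

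First I would check these two maps are mutually inverse bijections. That $\cP \mapsto \lang{\cP} \mapsto \cP$ is the content of the lemma immediately preceding (distinct partitions in $eNNC_{2n}$ give distinct thick subcategories), together with the observation that $\lang{\cP}$, being a thick subcategory generated by the object $\bigoplus_{m \in I}\bigl(\bigoplus$ of one indecomposable per block-orbit configuration$\bigr)$ — more cleanly, pick any object $F$ realising $\lang{\cP}$ and note its hc-decomposition recovers exactly $\cP$ by Lemma~\ref{Lem:Objects and Partitions 2}. Conversely, for a thick subcategory $\cT'$ with associated partition $\cP$, Lemma~\ref{Prop:Mx in Mg} tells us $\lang{F}$ is completely determined by the disjoint union $\bigsqcup_{i}\osM[F_i]$, which is exactly the data encoded by $\cP$; hence $\lang{\cP} = \lang{F} = \cT'$.

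Next I would verify that the bijection is a lattice isomorphism, i.e.\ that it is order-preserving in both directions and respects $\vee_e$ and $\wedge_e$. Order-preservation is direct from the explicit description of $\lang{\cP}$ as an $\mathrm{add}$-category indexed by block-orbits: $\cP <_e \cQ$ iff every block-orbit of $\cP$ sits inside one of $\cQ$ iff $\lang{\cP} \subseteq \lang{\cQ}$. Since $\vee_e$ in $eNNC_{2n}$ is literally the join $\vee$ of $NNC_{2n}$, compatibility with joins transfers from Theorem~\ref{Thm:thick in cn}; for meets I would use the \emph{definition} of $\wedge_e$ via $\eta(\cP_1 \wedge \cP_2) = \eta(\cP_1)\wedge_e \eta(\cP_2)$ together with the fact that $\pi$ sends the thick subcategory for $\cP_1 \wedge \cP_2$ in $\cC_{2n}$ to the one for $\eta(\cP_1 \wedge \cP_2)$ in $\ocC$ — this is precisely where the left square of the commutative diagram must be established. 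Finally, for the commutative diagram: the top square commutes because $\pi$ (on thick subcategories) is, under the two horizontal isomorphisms, identified with $\eta$ — this needs checking that $\pi(\langle\cP\rangle_{\cC_{2n}})$ has associated partition $\eta(\cP)$, which follows because localising kills exactly the indecomposables whose arcs lie in the "short-arc" region $\cD$, and these are precisely the ones corresponding to even singletons (an arc with both endpoints squeezed between $a_{2i}$ and $a_{2i+1}$ collapses to the point $a_i$ in $\ocC$). The bottom square commutes because $\cC_n \hookrightarrow \ocC$ (via $\xi$) matches the inclusion $\zeta: NNC_n \hookrightarrow eNNC_{2n}$ coming from the double-cover relabelling $[n] \to [2n]$, $i \mapsto 2i-1$ (odd orbits only).

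The main obstacle I expect is pinning down the top square precisely: one must track carefully how the localisation functor $\pi$ acts on indecomposables — in particular that an indecomposable of $\cC_{2n}$ with an endpoint in an "odd" segment $(a_{2i-1},a_{2i})$ or at an accumulation point survives (possibly with its arc modified to land on an accumulation point of $\ocC$), while one with \emph{both} endpoints in an "even" segment $(a_{2i},a_{2i+1})$ dies — and to confirm this is exactly the combinatorial content of removing even singletons, i.e.\ the map $\eta$. Once that identification is nailed down, functoriality (commutativity of both squares, preservation of meets and joins) follows formally from Theorem~\ref{Thm:thick in cn} and the universal property of the Verdier localisation.
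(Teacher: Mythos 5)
Your proposal follows essentially the same route as the paper: the bijection is built from Proposition \ref{Prop:Thick Subcats}, Theorem \ref{Thm:thick in cn}, Lemma \ref{Lem:Thick objects} and Lemma \ref{Lem:Objects and Partitions 2} exactly as in the paper's proof, and the two squares of the diagram are handled the same way (localisation kills precisely the even-singleton blocks for the top square, and $\zeta\colon i\mapsto 2i-1$ matching the inclusion $\xi$ for the bottom square). The only cosmetic difference is that the paper concludes the lattice isomorphism directly from the order-preserving bijection rather than checking meets and joins separately as you suggest.
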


\begin{proof}
First we show that there is an isomorphism of lattices between $eNNC_{2n}$ and $\mathrm{thick}(\ocC)$.

Let $\cT$ be a thick subcategory of $\ocC$, and let $F \in \ocC$ be an object such that $\cT = \lang{F}$.
Such an object $F$ exists as $\cT \subset \ocC$ is the essential image of some thick subcategory $\cT' \subset \cC_{2n}$, which is equal to $\lang{\cP'}$ for some $\cP' \in NNC_{2n}$ by Theorem \ref{Thm:thick in cn}.
Further, Lemma \ref{Lem:Thick objects} shows that there exists some object $F' \in \cC_{2n}$ such that $\lang{F'}=\lang{\cP'}$, and so 
\[
\lang{F} = \lang{\pi(F')} = \pi(\lang{F'}) = \pi(\lang{\cP'}) = \pi(\cT') = \cT.
\]

Lemma \ref{Lem:Objects and Partitions 2} tells us that there exists an even-exclusive non-exhaustive non-crossing partition $\cP$ such that $\cT = \lang{F} = \lang{\cP}$.
Therefore there exists a unique partition $\cP \in eNNC_{2n}$ for each thick subcategory $\cT \in \mathrm{thick}(\ocC)$, where uniqueness follows from $\lang{\cP} = \lang{\cQ}$ for $\cP, \cQ \in eNNC_{2n}$ if and only if $\cP = \cQ$ up to ordering.
Hence there is a bijection between $\mathrm{thick}(\ocC)$ and $eNNC_{2n}$.

Let $\cP,\cQ \in eNNC_{2n}$ be two partitions such that $\cP <_e \cQ$, and let $X \in \lang{\cP}$ be an indecomposable object, where $\ell_X = \{x,y\}$.
By definition of $\lang{P}$, $X \in \lang{P}$ if and only if $x,y \in \bigcup_{p\, \text{odd} \,\in B} (a_{(p-1)/2},a_{(p+1)/2}) \cup \bigcup _{p\, \text{even}\, \in B} a_{p/2}$ for some block $B \in \cP$.
However, as $\cP <_e \cQ$, then $B \subseteq C$ for some block $C \in \cQ$, and so $X \in \lang{\cQ}$, thus $\lang{P} \subset \lang{Q}$.
Therefore the bijection between $\mathrm{thick}(\ocC)$ and $eNNC_{2n}$ is order preserving, and so is an isomorphism of lattices.

Next we show that commutativity of the diagram of lattices 
\[
\begin{tikzcd}
    NNC_{2n} \arrow[d,"\eta",swap] \arrow[rr,"\sim"] && \mathrm{thick}(\cC_{2n}) \arrow[d,"\pi",swap]\\
    eNNC_{2n}  \arrow[rr,"\sim"] && \mathrm{thick}(\ocC) \\
    NNC_n \arrow[u,"\zeta"] \arrow[rr,"\sim"] && \mathrm{thick}(\cC_n) \arrow[u,"\xi"],
\end{tikzcd}
 \]
 where the horizontal isomorphisms are given by taking a partition $\cQ$ to the thick subcategory $\lang{\cQ}$.
 
 Let $\cP' \in NNC_{2n}$ be a partition, and let $\eta (\cP') \in eNNC_{2n}$.
 We wish to show that $\lang{\cP} = \pi(\lang{\cP'})$, where by abuse of notation $\pi \colon \mathrm{thick}(\cC_{2n}) \rightarrow \mathrm{thick}(\ocC)$ takes a thick subcategory $\cT' \subset \cC_{2n}$ to the subcategory $\pi(\cT') \subseteq \ocC$.
 Let $B$ be a block in $\cP'$ such that $B \neq \{i\}$ for some even $i \in [2n]$, then $B = \eta(B) \in \eta(\cP')$, and so $\lang{\eta(B)} = \pi(\lang{B}) \subset \ocC$.
This is because an indecomposable object $Z' \in \cC_{2n}$ is in $\lang{B}$ if and only if $\ell_{Z'} = \{z,z'\}$ with $z,z' \in \bigcup_{i \in B} (a_i,a_{i+1})$, and so $\pi(Z') \neq 0$ if $z,z' \in \bigcup_{i\, \text{odd} \,\in B} (a_{(i-1)/2},a_{(i+1)/2}) \cup \bigcup _{i\, \text{even}\, \in B} a_{i/2}$, which is exactly when an object is in $\lang{\eta(B)}$.
 
 Now let $B'$ be a block in $\cP'$ such that $B' = \{i\}$ for some even $ i \in [2n]$, then $\eta(B')=0$, so $\lang{\eta(B')}=0$.
 Also, $\lang{B'} \subseteq \cD \subset \cC_{2n}$, where $\cD$ is the thick subcategory in Construction \ref{Cons:Verdier Localisation}, such that $\ocC := \cC_{2n} / \cD$.
 This is because all indecomposable objects in $\lang{B'}$ correspond to a short arc with endpoints in $(a_{i},a_{i+1})$, and as $i$ is even, then those indecomposable objects are also in $\cD$.
 Hence $\pi(\lang{B'}) = 0 =\lang{\eta(B')}$.
 Therefore $\lang{\eta(\cP')} = \pi(\lang{\cP'})$ for all $\cP' \in NNC_{2n}$, and so the upper square of the diagram commutes.

Let $\zeta$ act on a partition $\cP \in NNC_n$ by taking a block $B = \{i_1,i_2,\ldots,i_l\} \in \cP$ to $\zeta(B) = \{2i_1-1,2i_2-1,\ldots,2i_l-1\} \in \cQ \in eNNC_{2n}$.
This will always be a partition in $eNNC_{2n}$ as all elements in $\zeta(B)$ are odd.
The map $\xi$ is the inclusion map from $\mathrm{thick}(\cC_n)$ to $\mathrm{thick}(\ocC)$.
It is clear that there are no even numbers in any $\zeta(B)$, and so an indecomposable object $Z \in \ocC$ is in $\lang{\zeta(B)}$ if and only if $\ell_Z$ has endpoints in $\bigcup_{2i-1 \in \zeta(B)} (a_i,a_{i+1})$.
That is, all indecomposable objects in $\lang{\zeta(B)}$ correspond to either a long arc or a short arc.

Similarly, an indecomposable object in $\lang{B} \subset \cC_n$ corresponds to an arc with endpoints in $\bigcup_{i \in B} (a_i,a_{i+1})$, and so via inclusion, an indecomposable object is in $\xi(\lang{B})$ if and only if the corresponding arc has endpoints in $\bigcup_{i \in B} (a_i,a_{i+1})$.
Hence an object $Z \in \ocC$ is in $\lang{\zeta(B)}$ if and only if $Z \in \xi(\lang{B})$, and so the lower square of the diagram commutes.
\end{proof}

\begin{example}
    Let $\cT'_1$ be the thick subcategory from Figure \ref{fig:thick in C3 bar}, and let $\cP \in eNNC_{2n}$ such that $\lang{\cP} = \cT'_1$, then $\cP = \{ \{1,2\},\{3,6\},\{4,5\}\}$.
    For $\cT'_2$ in Figure \ref{fig:thicker in C3 bar}, and let $\cQ \in eNNC_{2n}$ be the partition such that $\lang{\cQ} = \cT'_2$, then $\cQ = \{\{1,3,6\},\{4,5\}\}$.
\end{example}

\subsection{Counting Thick Subcategories}

The number of non-exhaustive non-crossing partitions of $[n]$ was given by Gratz and Zvonareva \cite{GZ2021} as
\[
\lvert NNC_n \rvert = \sum_{i=0}^n {n \choose i} \cdot C_i,
\]
where $C_i$ is the $i^{\text{th}}$ Catalan number.
Here we compute the number of even-exclusive non-exhaustive non-crossing partitions of $[2n]$ in terms of $\lvert NNC_m \rvert$, for $n \leq m \leq 2n$.

\begin{lemma}
    The number of even-exclusive non-exhaustive non-crossing partitions of $[2n]$ is given by the formula,
    \[
    \lvert eNNC_{2n} \rvert = \sum_{j=0}^{n} (-1)^j {n \choose j} \cdot \lvert NNC_{2n-j} \rvert.
    \]
\end{lemma}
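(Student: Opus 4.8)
The plan is to realise $eNNC_{2n}$ as the complement, inside $NNC_{2n}$, of those partitions that use at least one forbidden block, and then to apply inclusion--exclusion. Write $E = \{2,4,\dots,2n\} \subseteq [2n]$ for the set of even elements, so $\lvert E \rvert = n$, and for each $i \in E$ set
\[
A_i := \{\, \cP \in NNC_{2n} \mid \{i\} \in \cP \,\},
\]
the set of non-exhaustive non-crossing partitions of $[2n]$ in which $i$ occurs as a singleton block. By the definition of $eNNC_{2n}$ we have $eNNC_{2n} = NNC_{2n} \setminus \bigcup_{i \in E} A_i$, and so inclusion--exclusion yields
\[
\lvert eNNC_{2n} \rvert = \sum_{S \subseteq E} (-1)^{\lvert S \rvert} \Bigl\lvert \bigcap_{i \in S} A_i \Bigr\rvert,
\]
where the empty intersection is read as $NNC_{2n}$ itself.

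The crux is to evaluate $\lvert \bigcap_{i \in S} A_i \rvert$ for a fixed $S \subseteq E$ with $\lvert S \rvert = j$. I claim this equals $\lvert NNC_{2n-j} \rvert$, witnessed by the map that deletes the $j$ singleton blocks $\{i\}$, $i \in S$, from a partition $\cP \in \bigcap_{i\in S} A_i$ and then relabels the remaining $2n - j$ elements via the unique order-isomorphism $[2n]\setminus S \xrightarrow{\sim} [2n-j]$. This is well defined: no block other than the $\{i\}$'s is touched, so the remaining blocks stay pairwise disjoint and non-crossing, and since the non-crossing and non-exhaustiveness conditions depend only on the order of the ground set they are transported correctly by the order-isomorphism. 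It is a bijection because the inverse is equally transparent: given $\cQ \in NNC_{2n-j}$, pull it back along $[2n-j] \xrightarrow{\sim} [2n]\setminus S$ and adjoin the singletons $\{i\}$ for $i \in S$. Adjoining a singleton block can never create a crossing, since a crossing involves two blocks each containing at least two elements; hence the result lies in $NNC_{2n}$, and it visibly lies in every $A_i$, $i \in S$. The two maps are mutually inverse, so $\lvert \bigcap_{i \in S} A_i \rvert = \lvert NNC_{2n-j} \rvert$.

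Substituting this into the inclusion--exclusion formula and grouping the $2^n$ subsets $S \subseteq E$ according to their common size $j \in \{0,1,\dots,n\}$, of which there are ${n \choose j}$, gives
\[
\lvert eNNC_{2n} \rvert = \sum_{j=0}^{n} (-1)^j {n \choose j} \cdot \lvert NNC_{2n-j} \rvert,
\]
as claimed. The only genuinely delicate point is the bijection in the middle paragraph: one must check that simultaneously deleting several singleton blocks, and inserting them back, really does respect both defining constraints of a non-exhaustive non-crossing partition; once that is in hand, the rest is a bookkeeping exercise with binomial coefficients.
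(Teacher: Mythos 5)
Your proof is correct and follows essentially the same strategy as the paper: count the partitions of $NNC_{2n}$ containing $j$ prescribed even singletons (there are ${n\choose j}\cdot\lvert NNC_{2n-j}\rvert$ of them) and remove them with alternating signs. The only difference is presentational — you invoke the standard inclusion--exclusion principle and spell out the deletion/insertion bijection $\bigcap_{i\in S}A_i \cong NNC_{2n-j}$ explicitly, whereas the paper re-derives the alternating signs by an inductive over-counting correction using the identity $\sum_{m=0}^{l}(-1)^m{l+1\choose m}=(-1)^l$; your formulation is, if anything, the cleaner write-up of the same argument.
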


\begin{proof}
    Every even-exclusive non-exhaustive non-crossing partition of $[2n]$ is also a non-exhaustive non-crossing partitions of $[2n]$, and so $\lvert eNNC_{2n} \rvert \leq \lvert NNC_{2n} \rvert$.
    We prove our claim by starting with all partitions in $NNC_{2n}$, and systematically removing every partition which is not in $eNNC_{2n}$.
    We label the even numbers between $1$ and $2n$ inclusive, by $a_{i_1},a_{i_2},\ldots,a_{i_n}$ in no particular order.

    In general, there are $\lvert NNC_{2n-j} \rvert$ partitions of $NNC_{2n}$ that contain the subsets $\{a_{i_1}\}, \{a_{i_2}\}, \ldots , \{a_{i_j}\}$, and ${n \choose j}$ choices for $a_{i_1},a_{i_2},\ldots,a_{i_j}$.
    To see this, consider a partition $\cP$ such that $a_{i_1},a_{i_2},\ldots,a_{i_j} \not\in \cP$, then $\cP$ is equivalent to a non-exhaustive non-crossing partition of $[2n-j]$, of which there are $\lvert NNC_{2n-j} \vert$ possible partitions.
    Then we may form the partition $\cP'$ as the union of $\cP$ and $\{a_{i_1}\}, \{a_{i_2}\}, \ldots , \{a_{i_j}\}$, this is still a non-exhaustive non-crossing partition of $[2n]$, but not in $eNNC_{2n}$.
    Therefore there are at most ${n \choose j} \cdot \lvert NNC_{2n-j} \rvert$ partitions of $NNC_{2n}$ that contain $j$ subsets consisting of a single even number.
    If $j=n$, then there are exactly $\lvert NNC_n \rvert$ partitions of $[2n]$ containing the subsets $\{a_1\},\ldots,\{a_n\}$.

    However, we have over counted the possible number of partitions in $NNC_{2n}$ but not in $eNNC_{2n}$, as, for example, a partition containing $\{a_1\},\{a_2\}$ is also counted once as a partition containing $\{a_1\}$ and once as a partition containing $\{a_2\}$.
    We claim that this means we must in fact add $\sum_{j=1}^{n} (-1)^j {n \choose j} \cdot \lvert NNC_{2n-j} \rvert$ to $\lvert NNC_{2n} \rvert$, which we show via induction on $j$.
    
    For $j=1$, we have not previously removed any partitions, so we remove all partitions containing $\{a_{i_1}\}$ for some $a_{i_1}$, and so add $(-1)^1 {n \choose 1} \cdot \lvert NNC_{2n-1} \rvert$.
    For $j=2$, every partition containing $\{a_{i_1}\},\{a_{i_2}\}$ has been removed twice, once for containing $\{a_{i_1}\}$ and once for containing $\{a_{i_2}\}$, hence we must add back a copy of all such partitions.
    Therefore we add $(-1)^2 {n \choose 2} \cdot \lvert NNC_{2n-2} \rvert$.

    Suppose it is true for $j=l$, and let $\cP_1$ be a partition in $NNC_{2n}$ containing $\{a_{i_1}\}\{a_{i_2}\}\ldots \{a_{i_{l+1}}\}$.
    Then there are $\sum_{m=0}^l (-1)^m {l+1 \choose m}$ copies of $\cP$ counted in $\sum_{m=0}^{l} (-1)^m {n \choose l} \cdot \lvert NNC_{2n-l} \rvert$.
    However, for all $l > 0$, we have 
    \[
    \sum_{m=0}^l (-1)^m {l+1 \choose m} = (-1)^l.
    \]
    Therefore we must add the partition $\cP_1$ back in $(-1)^{l+1}$ times so that $\cP_1$ is counted a total of zero times, which is what we want as $\cP_1 \not\in eNNC_{2n}$.
    As there there are at least ${n \choose l+1} \cdot \lvert NNC_{2n-l-1} \rvert$ partitions of $NNC_{2n}$ that contain $l+1$ blocks of a single even number, we therefore add $(-1)^{l+1} {n \choose l+1} \cdot \lvert NNC_{2n-l-1} \rvert$ partitions.
    Hence 
    \[
    \lvert eNNC_{2n} \rvert = \sum_{j=0}^{n} (-1)^j {n \choose j} \cdot \lvert NNC_{2n-j} \rvert.
    \]
\end{proof}

\begin{remark}
The sequence of values of $\lvert eNNC_{2n} \rvert$ for $n \geq 1$ is equal to the diagonal of the Euler-Seidel matrix for the Catalan numbers \cite{Barry}.
\end{remark}

\section{Generators of \texorpdfstring{$\ocC$}{Cn}}\label{Sec:GensofCn}

\subsection{Generators}

Here we look at the classical generators of $\ocC$, and provide necessary and sufficient conditions for an object to be a classical generator.

\begin{proposition}\label{Prop:Gen is HomCon}
Let $G$ be a generator of $\ocC$.
Then $G$ is homologically connected.
\end{proposition}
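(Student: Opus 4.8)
The plan is to argue by contraposition through the thick-subcategory classification. Suppose $G \in \ocC$ is not homologically connected; I want to show $\lang{G} \neq \ocC$, i.e. $G$ is not a classical generator. By Proposition \ref{Prop:all X hom con} every indecomposable object of $\ocC$ is homologically connected, so Lemma \ref{Lem:decomposition} applies and $G$ has a hc decomposition $G \cong \bigoplus_{i \in I} G_i$ with $\lvert I \rvert \geq 2$ (if $\lvert I \rvert = 1$ then $G$ itself is homologically connected, contradicting our assumption). By Lemma \ref{Lem:Objects and Partitions 2}, $\lang{G}$ is equivalent to $\lang{\cP}$ for the associated partition $\cP = \{B_{m_i} \mid i \in I\} \in eNNC_{2n}$, where each block $B_{m_i}$ records the orbit $\osM[G_i]$.

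The key step is to observe that since the hc decomposition has at least two blocks, $\cP$ has at least two blocks, and hence $\cP$ is strictly below the maximum element $\widehat{1}$ of the lattice $eNNC_{2n}$ — the maximum element being the partition with a single block $[2n]$ (equivalently, the thick subcategory $\ocC$ itself, which corresponds under Theorem \ref{Thm:Lattices} to the whole category). More concretely, take $i \neq j$ in $I$; by Lemma \ref{Prop:Mx in Mg} an indecomposable $X$ lies in $\lang{G}$ iff $\osM[X] \subseteq \osM[G_k]$ for some single $k \in I$. Now pick any arc $\ell_X = \{x,y\}$ with $x$ in the orbit of some element of $B_{m_i}$ and $y$ in the orbit of some element of $B_{m_j}$ — such an arc exists since the two orbits are disjoint and, between them, span at least four marked points of $\osM$, so we can choose $x,y$ non-adjacent. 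Then $\osM[X] \not\subseteq \osM[G_k]$ for any single $k$, so $X \notin \lang{G}$, and therefore $\lang{G} \subsetneq \ocC$.

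The main obstacle — and the only place any care is needed — is the existence of the arc $X$ straddling two distinct orbits: one must check that the two orbits $\osM[G_i]$ and $\osM[G_j]$ genuinely contain marked points that can serve as the two endpoints of a legitimate arc (distinct, and not mutual predecessor/successor), and that this arc corresponds to a genuine indecomposable object of $\ocC$ rather than something lying in the localised-away subcategory $\cD$ from Construction \ref{Cons:Verdier Localisation}. This is where the even-exclusivity of $\cP$ is used: each block of $\cP$ contains at least two elements or at least one odd element, so each $\osM[G_i]$ contains either a whole open segment $(a_p, a_{p+1})$ (infinitely many points) or at least two distinct accumulation points; in every case there is room to choose a non-adjacent pair across the two orbits, and the resulting arc is a long arc or a (double) limit arc, hence not short, hence survives the localisation. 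Once this geometric point is settled the rest is immediate from the cited lemmas.
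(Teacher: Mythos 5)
Your argument is correct and is essentially the paper's own proof: argue the contrapositive, take the hc decomposition from Lemma \ref{Lem:decomposition} (valid by Proposition \ref{Prop:all X hom con}), apply the orbit criterion of Lemma \ref{Prop:Mx in Mg}, and exhibit an indecomposable whose arc has endpoints in two distinct orbits, hence lies outside $\lang{G}$. The detour through $eNNC_{2n}$ and the caution about the localised-away subcategory $\cD$ are unnecessary -- every arc of $\osM$ already corresponds to a genuine indecomposable of $\ocC$, and distinct orbits in a hc decomposition never meet the same segment, so a valid straddling arc always exists -- but this extra care does not affect correctness.
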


\begin{proof}
Suppose that $G \in \ocC$ is not homologically connected.
We show that $G$ cannot be a generator of $\ocC$.

By Lemma \ref{Lem:decomposition}, there exists a hc decomposition of $G$,
\[
G \cong \bigoplus_{i \in I} G_i.
\]
Further, Lemma \ref{Prop:Mx in Mg} tells us that an object $X \in \ocC$ is in $\langle G \rangle$ if and only if $\osM[X] \subseteq \osM[G_i]$ for some $i \in I$.
Now let $Y \in \ocC$ correspond to the arc $\ell_Y = \{y_1,y_2\}$, such that $y_1 \in \osM[G_j]$ and $y_2 \in \osM[G_{j'}]$ with $j \neq j' \in I$.
Then $\osM[Y] \not\subset \osM[G_i]$ for any $i \in I$, hence $Y \not\in \langle G \rangle$, so $G$ is not a classical generator of $\ocC$.
\end{proof}

Finally, we can combine Lemma \ref{Prop:Mx in Mg} and Proposition \ref{Prop:Gen is HomCon} to classify all of the generators of $\ocC$, and moreover, show that they all must be strong generators too.

\begin{theorem}\label{Thm:GensOfCn}
Let $G$ be an object in $\ocC$, then $G$ is a generator of $\ocC$ if and only if $G$ is homologically connected and $G$ has a complete orbit of $\osM$.
\end{theorem}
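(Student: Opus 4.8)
The proof splits into the two implications, and the forward direction is essentially already done. If $G$ is a generator then $G$ is homologically connected by Proposition \ref{Prop:Gen is HomCon}. For the complete orbit, suppose $\osM[G] \subsetneq \osM$; then there is a segment $(a_p,a_{p+1})$ (or an accumulation point $a_p$) containing no endpoint of any summand of $G$. Pick an indecomposable $Y$ with an endpoint in this missing part of the orbit (such $Y$ exists — a long arc through that segment, say). By Lemma \ref{Prop:Mx in Mg}, $Y \in \lang{G}$ forces $\osM[Y] \subseteq \osM[G_i]$ for some $i$, which is impossible since the missing endpoint of $Y$ is not in $\osM[G_i]$. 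Hence $Y \notin \lang{G}$ and $G$ is not a generator, a contradiction. So I would write the forward direction as a short paragraph citing these two results.

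For the converse, assume $G$ is homologically connected and $\osM[G] = \osM$. Since $G$ is homologically connected, its hc decomposition has a single block, so $\osM[G]$ is ``one piece''. By Lemma \ref{Prop:Mx in Mg}, an indecomposable $X$ lies in $\lang{G}$ iff $\osM[X] \subseteq \osM[G_i]$ for some $i$; with only one summand $G_1 = G$ and $\osM[G] = \osM$, this condition reads $\osM[X] \subseteq \osM$, which holds for every indecomposable $X$. Therefore every indecomposable object of $\ocC$ lies in $\lang{G}$, and since $\lang{G}$ is closed under finite direct sums and summands, $\lang{G} = \ocC$; that is, $G$ is a classical generator.

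Finally, to get the ``strong generator'' statement flagged in the paragraph preceding the theorem, I would note that any classical generator is a strong generator as soon as one classical generator is strong, and the existence of a strong generator of $\ocC$ follows because $\ocC$ has finite Rouquier dimension (this is established later in the section, where a classical generator of homological length $1$ is exhibited generating in finitely many steps). I would phrase this as a remark appended to the proof rather than folding it into the if-and-only-if, since the clean combinatorial content is the orbit characterisation.

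The only genuinely delicate point is the forward direction's claim that the ``missing'' part of the orbit is actually witnessed by some indecomposable object $Y$ with $\osM[Y]$ not contained in any $\osM[G_i]$ — one must make sure such a $Y$ exists given the constraints on arcs (for instance, if the missing orbit is a single accumulation point one needs a limit or double limit arc hitting it, and if it is a segment $(a_p,a_{p+1})$ one needs to check a short or long arc with an endpoint there whose other endpoint lands outside $\bigcup_i \osM[G_i]$, which is possible precisely because $\osM[G] \neq \osM$ leaves room). This is a short case check using the combinatorial model from Section \ref{Sec:Prelims}, and Lemma \ref{Prop:Mx in Mg} does all the heavy lifting once that object is produced.
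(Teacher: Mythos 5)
Your proposal is correct and takes essentially the same route as the paper: both directions rest on Proposition \ref{Prop:Gen is HomCon} and Lemma \ref{Prop:Mx in Mg}, with your forward direction merely the contrapositive phrasing of the paper's (and your ``delicate point'' is harmless, since any arc with an endpoint in the missing segment or at the missing accumulation point already has $\osM[Y] \not\subseteq \osM[G_i]$ for every $i$, regardless of where its other endpoint lies). The appended remark on strong generation is extra to the stated theorem but consistent with the paper's later results.
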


\begin{proof}
Let $G$ be a generator, then by Proposition \ref{Prop:Gen is HomCon} $G$ is homologically connected, and by Proposition \ref{Prop:Mx in Mg} $\osM[X] \subseteq \osM[G]$ for all $X \in \ocC$, and so $G$ has a complete orbit in $\osM$.

Let $G$ be homologically connected and have a complete orbit in $\osM$, then by Proposition \ref{Prop:Mx in Mg} all indecomposable objects are in $\langle G \rangle$, and so $G$ is a generator of $\ocC$.
\end{proof}

It follows from Theorem \ref{Thm:GensOfCn} that no short arcs may be direct summands of a minimal strong generator of $\ocC$ for all $n$.

\begin{corollary}\label{Cor:NoShortArcs}
Let $\ell_X$ be a short arc.
Then $X$ cannot be a direct summand of a minimal strong generator of $\ocC$.
\end{corollary}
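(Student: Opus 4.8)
The plan is to show that if $\ell_X$ is a short arc, then any strong generator with $X$ as a direct summand can have $X$ removed while remaining a generator, contradicting minimality. The key point is that a short arc contributes very little to the orbit of a generator, and what it contributes is already forced to be contributed by other summands.

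First I would set up the situation: suppose $G \cong X \oplus G'$ is a minimal strong generator of $\ocC$ with $\ell_X = \{x_1, x_2\}$ a short arc, so that $x_1, x_2 \in \sM$ lie strictly between two consecutive accumulation points $a, a'$ with $x_1 < x_2^- < a'$. By Theorem \ref{Thm:GensOfCn}, $G$ is homologically connected and has a complete orbit in $\osM$. I want to invoke Theorem \ref{Thm:GensOfCn} again to conclude that $G'$ is itself a generator, which requires two things: that $G'$ still has a complete orbit in $\osM$, and that $G'$ is still homologically connected.

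For the orbit: the suspensions and desuspensions of a short arc $X$ all have both endpoints in the open segment $(a, a')$ between the two accumulation points, so $\osM[X] \subseteq (a,a')$. The segment $(a,a')$ contains infinitely many marked points, so no single short arc's orbit can exhaust it — but more importantly, I would argue that any marked point in $\osM[X]$ must already lie in $\osM[G']$. The cleanest route: since $G$ is homologically connected, there is a zig-zag connecting $X$ to some indecomposable summand of $G'$; the first morphism of degree $1$ out of (a suspension of) $X$ goes to an indecomposable $F$ whose arc crosses $\ell_{X\wb{i}}$ for some $i$, hence $F$ has an endpoint inside $(a,a')$; iterating and using that the segment $(a,a')$ together with its marked points is "swept out" by the crossing arcs, one shows $\sM \cap (a,a') \subseteq \osM[G']$. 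Actually the more efficient argument is: $\osM[X]$ is a finite set (short arcs have finite orbit, since $X\wb{i}$ eventually leaves the strip — wait, no, $X\wb{i}$ stays in $[a^+, a'^-]$ but ranges over all such arcs as $i$ varies, so $\osM[X] = \{ m \in \sM : a < m < a'\}$ which is infinite). So I would instead argue directly that each point of $\osM[X]$ lies in $\osM[G']$ by using homological connectedness of $G$ plus the combinatorial description of which arcs cross a given short arc: any arc crossing some $X\wb{i}$ must have an endpoint in $(a,a')$, and by sweeping the zig-zag from $X$ to $G'$ one covers $(a,a') \cap \sM$. Combined with $\osM[G] = \osM$ this gives $\osM[G'] = \osM$.

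For homological connectedness of $G'$: I need that removing $X$ doesn't disconnect the zig-zag structure. Since every zig-zag passing through a suspension of $X$ enters and leaves via arcs with endpoints in $(a,a')$, and (by the orbit argument) those neighboring arcs are themselves already connected to the rest of $G'$ through the interior of the strip, one can reroute any zig-zag through $X$ to avoid $X$. Here the hard part will be making this rerouting precise — one wants a lemma of the flavour of Lemma \ref{Lem:ZigZagReduction}, showing that a short-arc step in a zig-zag can be replaced by a path among long/limit arcs. I would phrase it as: given indecomposables $F, F'$ of $G'$ with a zig-zag $F - \cdots - X\wb{i} - \cdots - F'$ in $\lang[1]{G}$, the two arcs adjacent to $X\wb{i}$ both cross $\ell_{X\wb{i}}$, so they cross each other or there is a long arc supported in $(a,a')$ crossing both, giving a zig-zag $F - \cdots - (\text{arc in }(a,a')) - \cdots - F'$ avoiding $X$, all of whose terms lie in $\lang[1]{G'}$ since $\osM[G'] = \osM \supseteq (a,a')$. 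Iterating over all occurrences of suspensions of $X$ shows $G'$ is homologically connected. Then Theorem \ref{Thm:GensOfCn} makes $G'$ a generator, and hence a strong generator (as noted after the definition of minimal strong generator, since some strong generator exists), contradicting minimality of $G$. The main obstacle is the rerouting lemma: carefully checking that two arcs both crossing a common short arc can always be linked by a degree-$1$ zig-zag lying strictly inside the strip $(a,a')$, using the cross/share-endpoint trichotomy and possibly the triangles from Figure \ref{fig:ExTri}.
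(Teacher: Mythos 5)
Your strategy -- delete $X$ and re-verify both conditions of Theorem \ref{Thm:GensOfCn} for $G'$ -- can be pushed through for the orbit condition (homological connectedness of $G$ forces a degree-one morphism between some suspension $X\wb{i}$ and a suspension of some summand $Y$ of $G'$, so $\ell_Y$ has an endpoint in the segment $(a,a')$ and hence $\osM[X]\subseteq\osM[Y]\subseteq\osM[G']$), but the homological connectedness step has a genuine gap. Your rerouting lemma rests on the claim that the detour terms ``lie in $\lang[1]{G'}$ since $\osM[G']=\osM$''. That is a non sequitur: $\lang[1]{G'}$ consists only of direct summands of finite sums of suspensions of $G'$, so the admissible vertices of a zig-zag are precisely the suspensions of the summands of $G'$; completeness of the orbit says nothing about an arbitrary arc inside the strip $(a,a')$ being such a suspension. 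Moreover the one-step detour need not exist at all: two arcs that both cross a short arc need not cross each other nor share an accumulation point (take $\ell_1=\{m_1,p\}$, $\ell_2=\{m_2,q\}$ with $x_1<m_1<m_2<x_2<q<p<x_1$ and $p,q\in\sM$), so the two neighbours of $X\wb{i}$ in a zig-zag may admit no degree-one morphism between them, and the auxiliary arc with both endpoints in $(a,a')$ that you invoke (which would be a short arc, not a long arc) is in general not in $\lang[1]{G'}$. So the lemma your proof hinges on is left unproved and, as justified, false -- even though the statement ``$G'$ is homologically connected'' is true a posteriori, it only comes out easily once one already knows $G'$ generates.

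The paper sidesteps this difficulty entirely: instead of checking homological connectedness of the complement, it shows directly that $X\in\lang{Y}$ for a single summand $Y$ of $G'$ meeting the segment -- via an explicit triangle whose middle term has $X$ as a direct summand when $\ell_Y$ is a long or limit arc with one endpoint in the segment, and via Lemma \ref{Prop:Mx in Mg} (using Proposition \ref{Prop:all X hom con}) when $\osM[X]=\osM[Y]$ -- so that $\lang{G'}\supseteq\lang{G}=\ocC$ and $G'$ is automatically a (strong) generator, contradicting minimality. Your own orbit argument already produces exactly such a $Y$ with $\osM[X]\subseteq\osM[Y]$, so the repair is short: replace the rerouting lemma by one application of Lemma \ref{Prop:Mx in Mg} to conclude $X\in\lang{Y}\subseteq\lang{G'}$, which is precisely the paper's route.
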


\begin{proof}
Let $\ell_X$ be a short arc, and $G$ be a generator with $X$ as a direct summand, and let $F$ be an object such that $G \cong F \oplus X$.

Let $G$ have no other direct summands with endpoints in the same segment as $\ell_X$.
Then $\Ext{i}{X}{F}=0$ for all $i \in \mathbb{Z}$, and so $G$ is not homologically connected.

Now let $Y$ be an indecomposable direct summand of $F$ such that $\osM[X] \subsetneq \osM[Y]$.
If $\ell_Y$ has only one endpoint on the segment shared by $\ell_X$, then $\ell_Y$ is a long arc or limit arc, and thus there exists a triangle
\[
Y\wlb j \wrb \rightarrow X \oplus Z \rightarrow Y\wlb l \wrb \rightarrow Y[j+1]
\]
and so $X \in \langle Y \rangle$ and so $F$ is also a generator of $\ocC$.

If $\osM[X] = \osM[Y]$, then Proposition \ref{Prop:Mx in Mg} implies that $X \in \langle Y \rangle$, as Proposition \ref{Prop:all X hom con} means that $Y$ must be homologically connected.

Therefore, $G$ cannot be a minimal generator if it has a short arc as an indecomposable direct summand.
\end{proof}

\begin{lemma}\label{Lem:Loops aren't minimal}
    Let $G$ be a generator of $\ocC$, and suppose there exists a zig-zag in $\langle G \rangle_1$,
    \[
\begin{tikzcd}
M_1 \arrow[r,dash] & M_2 \arrow[r,dash,dashed] & M_d \arrow[r,dash]& M_{d+1} \arrow[r,dash] & M_1,
\end{tikzcd}
\]
such that $M_1 \not\cong M_i \wlb j \wrb$ for all $i=2, \ldots, d+1$ and $j \in \mathbb{Z}$, and $\ell_{M_1}$ shares an endpoint each with $\ell_{M_2 \wlb a_2 \wrb}$ and $\ell_{M_{d+1} \wlb a_{d+1} \wrb}$ for some $a_2,a_{d+1} \in \mathbb{Z}$.
Then the object $F$ such that $G \cong F \oplus M_1$ is a generator of $\ocC$.
\end{lemma}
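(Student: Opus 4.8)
The goal is to show that removing $M_1$ from the generator $G \cong F \oplus M_1$ still leaves a generator. By Theorem \ref{Thm:GensOfCn}, it suffices to show that $F$ is homologically connected and that $F$ has a complete orbit in $\osM$. The strategy is to argue that every endpoint contributed by $M_1$ (and its suspensions) to $\osM[G]$ is already accounted for by $F$, and that the zig-zag structure through $M_1$ can be "short-circuited" using the remaining objects, so homological connectedness survives.

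First I would establish the orbit claim: $\osM[F] = \osM[M_1] \cup \osM[F]$, i.e.\ $\osM[M_1] \subseteq \osM[F]$. Since $\ell_{M_1}$ shares an endpoint with $\ell_{M_2\wlb a_2\wrb}$ and with $\ell_{M_{d+1}\wlb a_{d+1}\wrb}$, and $M_2, M_{d+1} \in \langle G \rangle_1 = \mathrm{add}\{G\wb{i}\}$ are summands of suspensions of $F$ (they cannot be suspensions of $M_1$ by the hypothesis $M_1 \not\cong M_i\wlb j\wrb$), both endpoints of $\ell_{M_1}$ already lie in $\osM[F]$. Taking suspensions, $\osM[M_1] = \osM[M_1\wb{i}]$-orbit is contained in $\osM[F]$ as well (here one uses that $\osM[X]$ is a union of whole segments/accumulation points, and that the shared-endpoint relation is preserved under simultaneous rotation). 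Hence $\osM[F] = \osM[G] = \osM$, so $F$ has a complete orbit.

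Next, homological connectedness of $F$. Let $N, N'$ be any two indecomposable summands of suspensions of $F$. Since $G$ is homologically connected there is a zig-zag from $N$ to $N'$ through $\langle G \rangle_1$; I would show any such zig-zag can be modified to avoid all suspensions of $M_1$. Whenever the zig-zag passes through a suspension $M_1\wb{k}$, it enters via some object with a degree-$1$ morphism to $M_1\wb{k}$ and leaves via another; by rotating the given cyclic zig-zag $M_1 - M_2 - \cdots - M_{d+1} - M_1$ by $\wb{k}$ we obtain a zig-zag $M_2\wb{k} - \cdots - M_{d+1}\wb{k}$ inside $\langle G\rangle_1$ that connects the neighbours of $M_1\wb{k}$ without using $M_1\wb{k}$ itself — this is exactly the content of the loop hypothesis (the two objects flanking $M_1$ in the cycle share endpoints with rotations of $M_1$, so crossings/shared-endpoints through $M_1\wb{k}$ are reproduced by crossings through $M_2\wb{k},\ldots,M_{d+1}\wb{k}$, using the combinatorial crossing criterion and the factorisation description from Section \ref{Sec:Prelims}). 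Splicing these detours in for every occurrence of a suspension of $M_1$ yields a zig-zag from $N$ to $N'$ entirely within $\langle F \rangle_1$. Therefore $F$ is homologically connected, and by Theorem \ref{Thm:GensOfCn} $F$ is a generator.

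**Main obstacle.** The delicate point is the rerouting argument: I must verify that when the original zig-zag uses a morphism $A - M_1\wb{k}$ and a morphism $M_1\wb{k} - B$, the objects $A$ and $B$ are genuinely connected to the endpoints of the rotated loop $M_2\wb{k},\ldots,M_{d+1}\wb{k}$, and that no new issue arises at limit arcs (where the suspension behaviour is irregular, cf.\ the case analysis in Lemma \ref{Lem:ZigZagReduction}). One must also handle the possibility that consecutive zig-zag steps both touch suspensions of $M_1$, and that the detour itself does not inadvertently reintroduce $M_1$ (guaranteed by $M_1 \not\cong M_i\wlb j\wrb$). Carefully tracking which segments of $\osM$ the arcs' endpoints occupy — exactly the bookkeeping used throughout Section \ref{Sec:Thick} — should resolve all of these, but it is the step that needs genuine care rather than routine verification.
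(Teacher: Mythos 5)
Your proposal is correct and follows essentially the same route as the paper: the complete orbit of $F$ comes from $\ell_{M_1}$ sharing its endpoints with (suspensions of) $\ell_{M_2}$ and $\ell_{M_{d+1}}$, and homological connectedness is preserved by rerouting any zig-zag through a suspension of $M_1$ along the remaining loop objects. The ``main obstacle'' you flag is exactly what the paper settles with a short endpoint-ordering observation: writing $\ell_{M_i\wlb a_i\wrb}=\{x_i,y_i\}$ with $x_i < x_{i+1}\leq y_i < y_{i+1} < x_i$, every marked point strictly between the endpoints of $\ell_{M_1}$ lies under some loop arc, so any arc crossing $\ell_{M_1}$ (or meeting it at an accumulation point) already crosses, or shares an accumulation-point endpoint with, some $\ell_{M_i}$ with $i\geq 2$, giving the length-one connection your splicing needs.
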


\begin{proof}
    As $\ell_{M_1}$ shares an endpoint each with $\ell_{M_2 \wlb a_2 \wrb}$ and $\ell_{M_{d+1} \wlb a_{d+1} \wrb}$, then $F$ has a complete orbit in $\osM$.
    We now need to show that $F$ is homologically connected.

    Let $\ell_{M_i \wlb a_i \wrb}=\{x_i,y_i\}$, where $x_i < x_{i+1} \leq y_i < y_{i+1} < x_i$ and $x_{i+1}=y_i$ if and only if $y_i$ is an accumulation point.
    That is, for some $m \in \osM$ such that $x_1 < m < y_1 < x_1$, then $x_i \leq m  \leq y_i < x_i$ for some $i=2,\ldots,d+1$, again with equality if and only if $m$ is an accumulation point.
    
    Suppose an arc $\ell_N = \{n_1,n_2\}$ crosses $\ell_{M_1}$, and $N$ is in $\langle G \rangle_1$.
    Then $\ell_N$ must also cross some arc $\ell_{M_i}$ for $i = 2,\ldots,d+1$, or share an endpoint with $\ell_{M_i}$ at an accumulation point.
    In either case, there exists a zig-zag of length 1 between $N$ and $M_i$, and so $F$ is homologically connected.

    Therefore, Theorem \ref{Thm:GensOfCn} tells us that $F$ is a generator of $\ocC$.
\end{proof}

\subsection{Rouquier Dimension}

The notion of a dimension on a triangulated category was introduced by Rouquier in \cite{Rouquier} as a tool to help study the representation dimension of a finite dimensional algebra.
Given an algebra $A$, Rouquier provides a series of lower bounds for various dimensions of $A$, whenever $A$ satisfies a set of given properties.
Notably, they provide a lower bound on the representation dimension of a finite dimensional algebra over a field, which is the dimension of the bounded derived category of said algebra.
This allows them to provide the first known examples of algebras with representation dimension $>3$, a long standing question at the time.

Throughout the rest of this thesis, we shall use Rouquier dimension to refer to the dimension of a triangulated category, with the latter term being preferred in \cite{Rouquier}.

\begin{definition}\label{Def:Rdim}
Let $\cC$ be a triangulated category.
If $G \in \cC$ is a generator, then we define the \textit{generation time of} $G$ to be the minimal integer $m$ such that $\langle G \rangle_{m+1} = \cC$.
The set of generation times of generators of $\cC$ is called the \textit{Orlov spectrum}, denoted $\mathcal{O(C)}$, and the infimum of $\mathcal{O(C)}$ is called the \textit{Rouquier dimension} of $\cC$, denoted $\rdim{\cC}$.

If there exists no such $G$, then we say $\rdim{\cC}$ is $\infty$.
\end{definition}

The following two results may be considered more folklore, however both have proofs found in \cite{ElaginLunts} by Elagin and Lunts.

\begin{lemma}\label{Lem:Rdim=0}
Let $\cT$ be a Krull-Schmidt, triangulated category.
Then $\rdim{\cT}=0$ if and only if $\cC$ contains only finitely many indecomposables up to isomorphism and shifts.
\end{lemma}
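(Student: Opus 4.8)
The statement is an "if and only if", so I would prove the two implications separately, and both should be short. The key observation is that $\rdim{\cT} = 0$ means there is a generator $G$ with $\langle G \rangle_1 = \cT$, i.e. \emph{every} object of $\cT$ is a direct summand of a finite coproduct of shifts of $G$ — no cones are needed. Since $\cT$ is Krull–Schmidt, I may assume $G = \bigoplus_{i=1}^m G_i$ with each $G_i$ indecomposable, and then $\langle G \rangle_1$ consists precisely of direct sums of summands of objects $\bigoplus_j G_{i_j}\wlb n_j \wrb$; but by Krull–Schmidt the only indecomposable summands of such an object are the $G_i\wlb n \wrb$ themselves. So $\langle G \rangle_1 = \cT$ forces every indecomposable of $\cT$ to be isomorphic to some $G_i\wlb n\wrb$, which is exactly the statement that $\cT$ has finitely many indecomposables up to isomorphism and shift.

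For the converse, suppose $\cT$ has only finitely many indecomposables up to isomorphism and shift, say with representatives $X_1, \ldots, X_m$ (one per shift-orbit). Set $G := \bigoplus_{i=1}^m X_i$. Any object $Y \in \cT$ decomposes, by Krull–Schmidt, as a finite direct sum of indecomposables, each of which is isomorphic to some $X_i\wlb n \wrb$; hence $Y$ is (a summand of, in fact equal to) a finite coproduct of shifts of summands of $G$, so $Y \in \langle G \rangle_1$. Thus $\langle G \rangle_1 = \cT$, so $G$ is a generator with generation time $0$, giving $\rdim{\cT} = 0$. (One should note $G$ is indeed a classical generator and not merely an object — since $\langle G\rangle = \langle G\rangle_1 = \cT$ this is immediate.)

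The only mild subtlety — and the step I would be most careful about — is the bookkeeping in the forward direction: making sure that "$Y$ is a direct summand of a finite coproduct of shifts of $G$" really does force each indecomposable summand of $Y$ to lie in the finite set $\{G_i\wlb n \wrb\}$. This uses the uniqueness part of the Krull–Schmidt property: an indecomposable summand of $\bigoplus_j G_{i_j}\wlb n_j \wrb$ must be isomorphic to one of the $G_{i_j}\wlb n_j \wrb$. I would state this explicitly rather than gloss over it, since it is the crux of why the "no cones needed" condition collapses to a finiteness statement. Everything else is routine manipulation of the definitions of $\langle G \rangle_1$ (Definition \ref{Def:langle}) and of the generation time (Definition \ref{Def:Rdim}). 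As a final remark I would point out that this also shows such $\cT$ automatically \emph{has} a generator, so the $\rdim{\cT} = \infty$ case of Definition \ref{Def:Rdim} does not intervene.
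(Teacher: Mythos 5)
Your proof is correct. A point of comparison: the paper does not prove this lemma at all --- it is stated as folklore with a citation to Elagin and Lunts for a proof --- so your self-contained argument is a welcome addition rather than a deviation, and it is essentially the standard one. Both directions come down to the observation you rightly flag as the crux: in a Krull--Schmidt category the exchange property forces $\langle G \rangle_1$ to be exactly the additive closure of the shifts of the finitely many indecomposable summands of $G$, so ``generation time $0$'' collapses to the finiteness statement, and conversely the sum of one representative per shift-orbit generates in one step. Two cosmetic remarks: in the forward direction you may wish to say explicitly that $\rdim{\cT}=0$ means the infimum of the Orlov spectrum is $0$, and since the spectrum is a set of non-negative integers this infimum is attained, i.e.\ some generator genuinely has generation time $0$; and note that the $\cC$ in the lemma statement is a typo for $\cT$, which your proof silently (and correctly) corrects.
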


Now we fix an object $E \in \ocC$ for all $n \geq 1$.
This is a classical generator of $\ocC$, and generates $\ocC$ in a single step.

Let $X_i \in \overline{\mathcal{C}}_n$ correspond to the arc $\ell_{X_i}=\{a_1,z_i\}$ for all $i=1,\ldots,n$, and $Y_j \in \overline{\mathcal{C}}_n$ correspond to the arc $\ell_{Y_j}=\{a_1,a_{j+1}\}$ for $j=1,\ldots, n-1$.
Then we let $E$ be the direct sums of all $X_i$'s and $Y_j$'s, i.e.\
\[
E = (\bigoplus_{i=1}^n X_i) \oplus (\bigoplus_{j=1}^{n-1} Y_j).
\]

\begin{figure}[h]
\centering
\begin{tikzpicture}
\draw (0,0) circle (4cm);
\draw[fill=white] (0,4) circle (0.1cm);
\draw[fill=white] ({4*sin(144)},{4*cos(144)}) circle (0.1cm);
\draw[fill=white] ({4*sin(72)},{4*cos(72)}) circle (0.1cm);
\draw[fill=white] ({4*sin(288)},{4*cos(288)}) circle (0.1cm);
\draw[fill=white] ({4*sin(216)},{4*cos(216)}) circle (0.1cm);
\draw[thin] (0,3.9) .. controls (0.5,3.3) .. ({4*sin(36)},{4*cos(36)}) node[pos=0.85, below] {$\ell_{X_n}$};
\draw[thin] (0,3.9) .. controls (1,0.3) .. ({4*sin(108)},{4*cos(108)}) node[pos=0.75, below] {$\ell_{X_{n-1}}$};
\draw[thin] (0,3.9) .. controls (-0.5,3.3) .. ({4*sin(324)},{4*cos(324)}) node[pos=0.85, below] {$\ell_{X_1}$};
\draw[thin] (0,3.9) .. controls (-1,0.3) .. ({4*sin(252)},{4*cos(252)}) node[pos=0.75, below] {$\ell_{X_2}$};
\draw[thin] (0,3.9) .. controls (1.5,1.5) .. ({3.9*sin(72)},{3.9*cos(72)}) node[pos=0.75, below] {$\ell_{Y_{n-1}}$};
\draw[thin] (0,3.9) .. controls (0.5,0) .. ({3.9*sin(144)},{3.9*cos(144)}) node[pos=0.85, below] {$\ell_{Y_{n-2}}\; \; \;$};
\draw[thin] (0,3.9) .. controls (-1.5,1.5) .. ({3.9*sin(288)},{3.9*cos(288)}) node[pos=0.75, below] {$\ell_{Y_1}$};
\draw[thin] (0,3.9) .. controls (-0.5,0) .. ({3.9*sin(216)},{3.9*cos(216)}) node[pos=0.85, below] {$ \; \; \ell_{Y_2}$};
\draw[thick,dotted] ({3.5*sin(190)},{3.5*cos(190)}) -- ({3.5*sin(170)},{3.5*cos(170)});
\path[fill=white] ({3.9*sin(190)},{3.9*cos(190)}) -- ({4.1*sin(190)},{4.1*cos(190)}) -- ({4.1*sin(170)},{4.1*cos(170)}) -- ({3.9*sin(170)},{3.9*cos(170)}) -- cycle;
\node at ({4.4*sin(0)},{4.4*cos(0)}) {$a_1$};
\node at ({4.4*sin(288)},{4.4*cos(288)}) {$a_2$};
\node at ({4.4*sin(216)},{4.4*cos(216)}) {$a_3$};
\node at ({4.4*sin(144)},{4.4*cos(144)}) {$a_{n-1}$};
\node at ({4.4*sin(72)},{4.4*cos(72)}) {$a_n$};
\node at ({4.4*sin(324)},{4.4*cos(324)}) {$z_1$};
\node at ({4.4*sin(252)},{4.4*cos(252)}) {$z_2$};
\node at ({4.4*sin(36)},{4.4*cos(36)}) {$z_n$};
\node at ({4.5*sin(108)},{4.5*cos(108)}) {$z_{n-1}$};
\end{tikzpicture}
\caption{The arcs corresponding to $X_i$'s and $Y_j$'s.}
\label{fig:XandYs}
\end{figure}
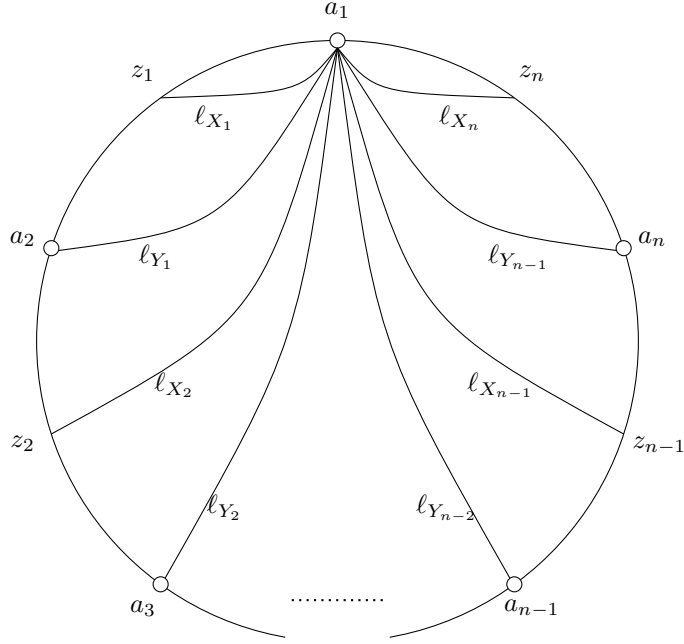

\begin{proposition}\label{Prop:E1}
An indecomposable object is in $\langle E \rangle_1$ if and only if it corresponds to an arc of the form $(a_1,y)$, for $a_1 \neq y \in \osM$.
Thus $E$ has a complete orbit in $\osM$.
\end{proposition}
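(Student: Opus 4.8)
### Proof Proposal

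\medskip

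The plan is to prove both directions of the equivalence directly from the combinatorial model, and then deduce the statement about orbits. First I would fix notation: $E = \bigoplus_{i=1}^n X_i \oplus \bigoplus_{j=1}^{n-1} Y_j$, where $\ell_{X_i} = \{a_1, z_i\}$ (with $z_i$ the marked point in the segment $(a_i, a_{i+1})$ immediately after $a_i$, reading the figure) and $\ell_{Y_j} = \{a_1, a_{j+1}\}$. The crucial structural observation is that every indecomposable summand of $E$, as well as every suspension and desuspension of such a summand, corresponds to an arc with one endpoint at $a_1$ (using that suspension rotates both endpoints clockwise, and $a_1 \in L(\osM)$ is fixed by rotation, i.e.\ $a_1^- = a_1^+ = a_1$). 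So $\langle E \rangle_1$ already contains a large supply of arcs emanating from $a_1$.

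\medskip

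For the ``if'' direction, suppose $W$ is an indecomposable object with $\ell_W = \{a_1, y\}$ for some $y \neq a_1$ in $\osM$. I would split into cases according to the location of $y$: if $y$ lies in some segment $(a_k, a_{k+1})$, then $\ell_W$ either coincides with a suspension of some $X_i$ or $Y_j$ (done immediately), or it lies ``between'' two such arcs sharing the endpoint $a_1$; concretely, $\ell_W$ together with two neighbouring arcs from the summands of $E$ and their suspensions bounds a region, and I would exhibit a distinguished triangle $A \to W \to B \to A\wlb 1 \wrb$ with $A, B \in \langle E \rangle_1$ using the triangle for arcs sharing an endpoint at an accumulation point given in Section~\ref{Sec:Prelims} (the triangle $Y \to Z \to X \to Y\wlb 1\wrb$ with $\ell_Z = \{x_1, y_1\}$, specialised to arcs from $a_1$). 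Wait --- more carefully: since $\langle E\rangle_1$ is closed under taking direct summands of finite coproducts of \emph{suspensions}, and the summands of $E$ realise arcs to $z_1, \dots, z_n$ and to $a_2, \dots, a_n$, every rotation of these gives arcs from $a_1$ to every marked point of $\sM$ and to every accumulation point $a_2, \dots, a_n$. Thus \emph{every} arc of the form $\{a_1, y\}$ is literally a suspension of one of the $X_i$ or $Y_j$, hence already lies in $\langle E \rangle_1$ with no triangle needed. I would verify this orbit computation explicitly: the orbit of $X_i$ in $\osM$ is $\{a_1\} \cup (a_i, a_{i+1}) \cup \cdots$ — actually I should check that the clockwise rotations of $\ell_{X_i} = \{a_1, z_i\}$ sweep out exactly the segment $(a_i, a_{i+1})$ together with the endpoint $a_{i+1}$ is reached as a limit; combined over all $i$ and with the $Y_j$ providing the $a_{j+1}$ themselves, the union of orbits is all of $\osM$.

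\medskip

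For the ``only if'' direction, I would argue by induction on the $\star$-filtration: show that if $W \in \langle E \rangle_1$ is indecomposable then $\ell_W$ has $a_1$ as an endpoint. Since $\langle E \rangle_1$ consists of direct summands of finite coproducts of suspensions of summands of $E$, and each summand of a coproduct of indecomposables is one of those indecomposables (Krull--Schmidt), $W$ must be a suspension $X_i \wlb m \wrb$ or $Y_j \wlb m \wrb$; by the rotation observation each such object corresponds to an arc with endpoint $a_1$. This direction is essentially immediate once the rotation-fixes-$a_1$ point is made precise.

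\medskip

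Finally, the orbit statement ``$E$ has a complete orbit in $\osM$'' follows from the ``if'' direction together with the orbit sweep: $\osM[E] = \bigcup_i \osM[X_i] \cup \bigcup_j \osM[Y_j]$, and I would observe that $\{a_1\}$ is in every $\osM[X_i]$, that the rotations of $\ell_{X_i}$ cover the open segment $(a_i, a_{i+1})$, and that $\osM[Y_j] \ni a_{j+1}$; since the segments $(a_i, a_{i+1})$ for $i = 1, \dots, n$ together with the accumulation points $a_1, \dots, a_n$ exhaust $\osM$, we get $\osM[E] = \osM$.

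\medskip

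I expect the main obstacle to be the careful bookkeeping in the ``if'' direction: pinning down exactly which marked point $z_i$ is (it should be chosen so that the clockwise rotations of $\{a_1, z_i\}$ cover the whole segment $(a_i,a_{i+1})$ without gaps and without overshooting into $(a_{i+1}, a_{i+2})$), and checking that no arc $\{a_1, y\}$ is accidentally missed — in particular handling the boundary cases where $y$ is an accumulation point $a_k$ (covered by the $Y_{k-1}$) versus $y \in (a_k, a_{k+1})$ (covered by rotations of $X_k$). A secondary subtlety is confirming that arcs from $a_1$ to a point in the segment $(a_n, a_1)$ adjacent to $a_1$ on the other side are also covered; this should come from negative suspensions (anticlockwise rotations) of $X_1$, using $\wb{-1}$.
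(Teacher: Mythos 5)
Your proposal is correct and follows essentially the same route as the paper: both directions come down to the observation that $a_1$ is fixed under rotation, so every indecomposable in $\langle E \rangle_1$ is (by Krull--Schmidt) a suspension of some $X_i$ or $Y_j$ and hence an arc $\{a_1,y\}$, while conversely every arc $\{a_1,y\}$ with $y \in \sM$ is a suspension of some $X_i$ and every arc $\{a_1,a_{j+1}\}$ is $Y_j$ itself, with no triangles needed. The only slip is in your closing aside: the segment $(a_n,a_1)$ is swept by the suspensions of $X_n$ (since $z_n$ lies in that segment), not by desuspensions of $X_1$, whose rotations never leave $(a_1,a_2)$ because $a_1$ is an accumulation point --- but your main argument already covers this, since the $z_i$ meet every segment.
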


\begin{proof}
Let $M \in \langle E \rangle_1$ be indecomposable, then by definition $M$ is a direct summand of $E \wlb p \wrb$ for some $p \in \mathbb{Z}$.
This means that either $M \cong X_i[a]$ for some $i \in \{1,\ldots, n\},\; a \in \mathbb{Z}$, or $M \cong Y_j[b]$ for some $j \in \{1, \ldots, n-1\},\; b \in \mathbb{Z}$.

If $M \cong X_i[a]$, then it corresponds to the arc $\{a_1, z_i - a\}$ by \cite{Paquette2020}.
If $M \cong Y_j[b]$, then it corresponds to the arc $\{a_1, a_{j+1}\}$, also by \cite{Paquette2020}.
Therefore, when $M \in \langle E \rangle_1$ is indecomposable, $\ell_M$ is of the form $\{a_1, y\}$ for $a_1 \neq y \in \osM$.

Now let $\ell_N=\{a_1,y\}$ for some $a_1 \neq y \in \osM$.
If $y \in L(\mathscr{M})$, then $N \cong Y_j$ for some $1 \leq j \leq n-1$.
Similarly, if $y \in \mathscr{M}$, then $N \cong X_i[a]$ for some $1 \leq i \leq n$ and $a \in \mathbb{Z}$.
In both cases, $N$ is isomorphic to an indecomposable summand of a suspension of $E$, and therefore $N \in \langle E \rangle_1$.
\end{proof}

\begin{lemma}\label{Lem:E is a gen}
    The object $E$ is a generator of $\overline{\mathcal{C}}_n$.
    Moreover, $E$ has generation time 1.
\end{lemma}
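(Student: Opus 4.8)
The plan is to prove that $E$ is a classical generator by invoking Theorem~\ref{Thm:GensOfCn}, which reduces the claim to showing that $E$ is homologically connected and has a complete orbit in $\osM$. The complete-orbit statement is already recorded in Proposition~\ref{Prop:E1}: every marked point $y \in \osM$ appears as an endpoint of the arc $\{a_1,y\}$, which corresponds to some indecomposable summand of a suspension of $E$. So the first real task is homological connectedness of $E$. By Proposition~\ref{Prop:all X hom con} each indecomposable summand $X_i$, $Y_j$ is itself homologically connected, so it suffices to exhibit, for each pair of summands, a zig-zag of morphisms of degree $1$ between them through objects of $\langle E \rangle_1$. By Proposition~\ref{Prop:E1}, $\langle E \rangle_1$ is exactly the additive closure of the arcs $\{a_1,y\}$, all sharing the endpoint $a_1$.

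The key observation is that any two such arcs $\{a_1,y\}$ and $\{a_1,y'\}$ with $y \ne y'$ either cross or share the endpoint $a_1$, and in the latter case — provided one is an anticlockwise rotation of the other about $a_1$ when $a_1 \in L(\osM)$, or provided they are genuinely distinct arcs through the common marked point — there is a non-zero morphism of degree $1$ in at least one direction (using the $\Ext^1$-description and the distinguished triangles $Y \to Z \to X \to Y\wb{1}$ recalled in Section~\ref{Sec:Prelims} for arcs sharing an endpoint at an accumulation point). For the generic pair of arcs $\{a_1,y\}$, $\{a_1,y'\}$ that do not cross, one can pass through an intermediate arc, e.g. $\{a_1, y''\}$ chosen so that it crosses both, or use a short zig-zag through a rotation; since $a_1$ is an accumulation point in the model for $E$ this is where the (double) limit arc bookkeeping enters. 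In every case a zig-zag of length at most $1$ or $2$ connects the two indecomposables inside $\langle E \rangle_1$, so $E$ is homologically connected. Combined with the complete orbit, Theorem~\ref{Thm:GensOfCn} gives that $E$ is a generator.

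For the generation-time claim, I would show $\langle E \rangle_2 = \ocC$, i.e. every indecomposable object $Z$ with $\ell_Z = \{z_1,z_2\}$ lies in $\langle E \rangle_1 \star \langle E \rangle_1$. The arcs $\{a_1,z_1\}$ and $\{a_1,z_2\}$ are both in $\langle E \rangle_1$ by Proposition~\ref{Prop:E1}. If $\{a_1,z_1\}$ and $\{a_1,z_2\}$ cross, or share the endpoint $a_1$ with one a rotation of the other, then one of the distinguished triangles recorded in Section~\ref{Sec:Prelims} — of the form $U \to V \oplus Z \to W \to U\wb{1}$ or $U \to Z \to W \to U\wb{1}$ with $U, W$ among the arcs $\{a_1, z_i\}$ — exhibits $Z$ (up to a direct summand, which is allowed in the definition of $\star$) as an extension of an object of $\langle E \rangle_1$ by an object of $\langle E \rangle_1$. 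Here one must be slightly careful about the degenerate configurations: when $z_1$ or $z_2$ is itself $a_1$ there is nothing to do since $Z \in \langle E \rangle_1$ already, and when $\{a_1,z_1\}$ and $\{a_1,z_2\}$ neither cross nor admit the rotation relation, one should check that $Z$ appears directly as $\ell_Z = \{z_1,z_2\}$ inside a triangle built from $\{a_1,z_1\}$ and a suitable suspension of $\{a_1,z_2\}$. This, together with the first paragraph's observation that $E$ is a generator, forces the generation time to be exactly $1$ (it cannot be $0$ since $\ocC$ has infinitely many indecomposables up to shift, by Lemma~\ref{Lem:Rdim=0}).

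The main obstacle is the case analysis around arcs incident to the accumulation point $a_1$: limit arcs and double limit arcs behave differently under $\wb{1}$, and the morphism-space conventions for arcs sharing an accumulation-point endpoint (an $\Ext^1$ only in one rotational direction) mean the "connect two arcs through $a_1$" step and the "realise $Z$ as an extension" step each split into several subcases depending on whether $z_1, z_2 \in \sM$ or $z_1, z_2 \in L(\osM)$. Once those subcases are organised — using the explicit triangles from Section~\ref{Sec:Prelims} in each — both parts of the lemma follow without further difficulty.
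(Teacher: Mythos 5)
Your proposal takes essentially the same route as the paper's proof: complete orbit via Proposition \ref{Prop:E1}, homological connectedness from the degree-one morphisms between arcs sharing the accumulation point $a_1$, Theorem \ref{Thm:GensOfCn} to conclude $E$ generates, and the distinguished triangle built from $\{a_1,z_1\}$ and $\{a_1,z_2\}$ to put any indecomposable in $\langle E \rangle_2$. The only remark is that your intermediate-arc fallback is unnecessary (indeed impossible, since two arcs through $a_1$ never cross): for any two summands of suspensions of $E$ one is an anticlockwise rotation of the other about $a_1$, so a length-one zig-zag always exists, which is exactly how the paper argues.
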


\begin{proof}
    By Proposition \ref{Prop:E1}, $E$ has a complete orbit in $\osM$.
    Also, by \cite{Paquette2020}, for any two arcs $\ell_U$ and $\ell_V$ sharing an endpoint at an accumulation point, then $\Ext{1}{U}{V} \cong k$ if and only if $\ell_U$ is an anti-clockwise rotation of $\ell_V$ about the shared endpoint.
    Therefore there exists a zig-zag of length 1 between all indecomposable summands of $E$, and so $E$ is homologically connected.
    It follows from Theorem \ref{Thm:GensOfCn} that $E$ is a generator of $\ocC$.

    Let $X \in \ocC$ be an indecomposable object with corresponding arc $\ell_X = \{x_1,x_2\}$, and suppose $x_1 < x_2 \leq a_1 < x_1$.
    If $x_2 = a_1$, then $X \in \lang[1]{E}$ by Proposition \ref{Prop:E1}.
    Else, if $x_2 \neq a_1$, then there exists a triangle
    \[
    U \rightarrow X \rightarrow V \rightarrow U \wb{1},
    \]
    where $U,V$ are indecomposable objects and correspond to the arcs $\ell_U = \{a_1,x_1\}$ and $\ell_V = \{a_1,x_2\}$.
    Therefore $X \in \lang[2]{E}$, and so any indecomposable object is in $\lang[2]{E}$, which is closed under direct sums.
    Hence $E$ has generation time 1.
\end{proof}

The following is a consequence of Lemmas \ref{Lem:Rdim=0} and \ref{Lem:E is a gen}.

\begin{corollary}\label{Cor:Rdim}
The Rouquier dimension of $\ocC$ is 1.
\end{corollary}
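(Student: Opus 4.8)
The plan is to prove $\rdim{\ocC}\le 1$ and $\rdim{\ocC}\ge 1$ separately, each via one of the two cited lemmas.

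\emph{Upper bound.} By Lemma~\ref{Lem:E is a gen}, the explicit object $E$ is a classical generator of $\ocC$ with generation time $1$, so $1\in\cO(\ocC)$. Since $\cO(\ocC)$ is a nonempty set of non-negative integers and $\rdim{\ocC}=\inf\cO(\ocC)$ by Definition~\ref{Def:Rdim}, this gives $\rdim{\ocC}\le 1$ immediately.

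\emph{Lower bound.} It then suffices to rule out $\rdim{\ocC}=0$, and for this I would invoke Lemma~\ref{Lem:Rdim=0}: as $\ocC$ is Krull--Schmidt, $\rdim{\ocC}=0$ would force $\ocC$ to contain only finitely many indecomposable objects up to isomorphism and shift. But the indecomposables of $\ocC$ correspond bijectively to $\mathrm{arc}(\osM)$, and this set contains infinitely many orbits under the shift functor $\wb{1}$. Indeed, for a short arc $\{x_1,x_2\}$ let $w(\{x_1,x_2\})$ be the number of marked points lying strictly between $x_1$ and $x_2$ on the short side; then $w$ takes every value in $\mathbb{Z}_{\ge 1}$, and $w$ is invariant under $\wb{1}$, because rotation preserves the set of enclosed marked points and carries a short arc to a short arc (marked points accumulate to each accumulation point from both sides, so $x_1^-,x_2^-$ lie in the same segment as $x_1,x_2$). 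Hence short arcs of different widths are never shifts of one another, so $\ocC$ has infinitely many indecomposables up to isomorphism and shift, and therefore $\rdim{\ocC}\neq 0$. Combining this with the upper bound and $\cO(\ocC)\subseteq\mathbb{Z}_{\ge 0}$ forces $\rdim{\ocC}=1$.

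\emph{Main obstacle.} Both bounds are essentially immediate once the cited lemmas are in place; the only step that needs genuine (if routine) care is the lower bound --- one must exhibit an explicit shift-invariant separating infinitely many indecomposables, and the ``width'' of a short arc, together with the observation that short arcs form a $\wb{1}$-stable family, does exactly this uniformly in $n$.
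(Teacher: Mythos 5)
Your proof is correct and follows exactly the route the paper intends: the paper states Corollary~\ref{Cor:Rdim} as an immediate consequence of Lemma~\ref{Lem:E is a gen} (giving $\rdim{\ocC}\le 1$ via the generator $E$ of generation time $1$) and Lemma~\ref{Lem:Rdim=0} (ruling out dimension $0$ since $\ocC$ has infinitely many indecomposables up to isomorphism and shift). Your width invariant on short arcs is a correct and slightly more explicit justification of the latter point than the paper bothers to record.
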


\subsection{Orlov Spectrum}

The Orlov spectrum, then known as the dimension spectrum, of a triangulated category was first introduced by Orlov in \cite{OrlovSpectra}.
Orlov looks at the dimension spectra of bounded derived categories of various geometric categories, and asks the question of whether the Orlov spectrum of the bounded derived category of coherent sheaves on a smooth, quasi-projective scheme form an integer interval?
It proves to be a difficult task to compute the Orlov spectrum for triangulated categories in general, and when it may not be possible to directly compute the Orlov spectrum, it may be natural to ask whether or not there are upper and lower bounds to the Orlov spectrum.
We provide an interval bound for the Orlov spectrum of $\ocC$ in Theorem \ref{Thm:Orlov}, and compute the Orlov spectrum for $\ocC[1]$.

Ballard, Favero and Katzarkov study the Orlov spectra of triangulated categories arising from mirror symmetry in \cite{Orlovbounds}.
They develop techniques to associate a generator to any given object in the bounded derived category of coherent sheaves on a smooth Calabi-Yau hypersurface, and show that these generators are uniformly bounded in their generation time.
More relevantly to our situation, they also compute the Orlov spectrum of the bounded derived category of the category of finitely generated modules of the path algebra of a quiver of Dynkin type $A_n$.

\begin{theorem}\cite{Orlovbounds}
    Let $Q$ be a quiver of Dynkin type $A_n$.
    Then the Orlov spectrum of $\mathrm{D}^{\mathrm{b}}(\mathrm{mod}\, kQ)$ is equal to the integer interval $\{0,\ldots,n-1\}$.
\end{theorem}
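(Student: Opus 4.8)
The plan is to prove the two containments $\mathcal{O}(\mathcal{D}) \subseteq \{0,\dots,n-1\}$ and $\{0,\dots,n-1\} \subseteq \mathcal{O}(\mathcal{D})$ separately, where $\mathcal{D} = \mathrm{D}^{\mathrm{b}}(\mathrm{mod}\,kQ)$ and $A = kQ$. Since all orientations of $A_{n}$ are derived equivalent I may assume $Q$ is linearly oriented, so that the indecomposable $A$-modules are the interval modules $M[i,j]$ ($1 \leq i \leq j \leq n$, with top $S_{i}$, socle $S_{j}$, composition factors $S_{i},\dots,S_{j}$), and, $A$ being hereditary, every object of $\mathcal{D}$ is a finite direct sum of shifts of interval modules. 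First I would settle the endpoints. As $A_{n}$ has finite representation type, $\mathcal{D}$ has only finitely many indecomposables up to isomorphism and shift, so Lemma \ref{Lem:Rdim=0} gives $\rdim{\mathcal{D}} = 0$, hence $0 \in \mathcal{O}(\mathcal{D})$. For the other endpoint take the semisimple generator $S = \bigoplus_{i=1}^{n} S_{i}$ (the simples generate the bounded derived category of any finite-dimensional algebra); iterating the short exact sequences $0 \to M[i+1,j] \to M[i,j] \to S_{i} \to 0$ shows $M[i,j] \in \lang[j-i+1]{S} \subseteq \lang[n]{S}$, and as $\lang[n]{S}$ is closed under shifts and finite sums this gives $\lang[n]{S} = \mathcal{D}$. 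Conversely, an induction on $m$ shows that any object of $\lang[m]{S}$ lying in the heart $\mathrm{mod}\,A$ of the standard $t$-structure has Loewy length $\leq m$ — in the inductive step, a cone of a morphism between finite sums of shifts of semisimple modules which happens to be concentrated in a single cohomological degree carries a two-step radical filtration — so the uniserial module $M[1,n]$, of Loewy length $n$, is not in $\lang[n-1]{S}$. Therefore $S$ has generation time exactly $n-1$, and $n-1 \in \mathcal{O}(\mathcal{D})$.

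To realise the intermediate values, for $0 \leq r \leq n-1$ I would use the object
\[
G_{r} \;=\; \Bigl(\bigoplus_{i=1}^{r} S_{i}\Bigr) \;\oplus\; \Bigl(\bigoplus_{i=r+1}^{n} P_{i}\Bigr),
\]
with $P_{i} = M[i,n]$ the indecomposable projective at vertex $i$. It is a generator: $P_{r+1},\dots,P_{n}$ already generate the bounded derived category of the sub-quiver on $\{r+1,\dots,n\}$, so $S_{r+1},\dots,S_{n}$ lie in its thick closure, and $S_{1},\dots,S_{r}$ are there by construction. A dévissage along the submodule chains $M[n,n] \subseteq M[n-1,n] \subseteq \cdots$, controlled by the Loewy-length estimate above, computes the generation time of $G_{r}$ (at one extreme $G_{0} = A$ has generation time $1$ for $n\geq 2$, as $A$ is hereditary; at the other $G_{n-1} = S$ has generation time $n-1$), and as $r$ increases the generation time runs through every value in $\{1,\dots,n-1\}$. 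Combined with the first paragraph this yields $\{0,\dots,n-1\} \subseteq \mathcal{O}(\mathcal{D})$.

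The remaining — and genuinely hard — step is the uniform upper bound: $\lang[n]{G} = \mathcal{D}$ for \emph{every} generator $G$. Since $\mathcal{D}$ is generated by shifts of interval modules it is enough to place each $M[i,j]$ in $\lang[n]{G}$, but the naive "strip off one composition factor at a time" dévissage fails in general, because an indecomposable summand of $G$ may sit so awkwardly in the Auslander--Reiten quiver $\mathbb{Z}A_{n}$ of $\mathcal{D}$ that building $M[i,j]$ seems to demand first building it up and then stripping it down. To control this I would exploit the rigidity of $\mathcal{D}$: $A$ is Koszul with Koszul dual $A^{!} = kQ^{\mathrm{op}}/\mathrm{rad}^{2}$ of global dimension $n-1$, and I would invoke (or reconstruct) the Ballard--Favero--Katzarkov estimates bounding the largest generation time of $\mathrm{D}^{\mathrm{b}}(\mathrm{mod}\,A)$ in terms of $\mathrm{gldim}\,A^{!}$ — equivalently, a careful induction on the position of $M[i,j]$ in $\mathbb{Z}A_{n}$ showing that no indecomposable is more than $n-1$ cone-steps from any generator. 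Granting this, every generator has generation time $\leq n-1$, which together with the preceding paragraphs gives $\mathcal{O}(\mathcal{D}) = \{0,1,\dots,n-1\}$. The endpoint and interpolation arguments are routine bookkeeping; the uniform upper bound over all generators is where the real work lies.
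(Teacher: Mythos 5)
First, a point of comparison: the paper does not prove this statement at all — it is quoted verbatim from Ballard--Favero--Katzarkov \cite{Orlovbounds} as background for Section \ref{Sec:GensofCn}, so there is no in-paper argument to measure you against; a blind proof would have to be genuinely self-contained. Against that standard your proposal has a real gap, and it sits exactly where you say the real work lies: the uniform upper bound that \emph{every} generator $G$ satisfies $\langle G \rangle_{n} = \mathrm{D}^{\mathrm{b}}(\mathrm{mod}\,kQ)$. Your plan for this step is to ``invoke (or reconstruct) the Ballard--Favero--Katzarkov estimates'' in terms of $\mathrm{gldim}\,A^{!}$ — but that is the very theorem being proved, so invoking it is circular, and the proposed reconstruction (``a careful induction on the position of $M[i,j]$ in $\mathbb{Z}A_{n}$ showing that no indecomposable is more than $n-1$ cone-steps from any generator'') is precisely the content of the hard half of the theorem and is nowhere carried out. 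Nothing in your sketch controls an arbitrary generator, whose indecomposable summands may be scattered through $\mathbb{Z}A_{n}$; bounds via global dimension or Koszul duality give generation times of \emph{particular} objects (e.g.\ of $A$ or of $A/\mathrm{rad}A$), not a bound uniform over all generators, and the gap between these two kinds of statement is the whole difficulty.

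Two secondary points. The lower-bound bookkeeping for the endpoints is essentially right ($0$ from finite representation type, and $n-1$ from the semisimple generator), but your induction ``any object of $\langle S\rangle_{m}$ lying in the heart has Loewy length $\leq m$'' is stated too casually: an object of $\langle S\rangle_{m+1}$ in the heart is only a \emph{summand of a cone} of a map between objects that need not be concentrated in a single cohomological degree, so one must either pass to cohomology with respect to the standard $t$-structure or run a ghost-lemma argument on radical layers; this is fixable but not automatic as written. Similarly, for the interpolating family $G_{r}$ you assert, but do not prove, the lower bounds showing its generation time actually steps through every value of $\{1,\dots,n-1\}$ (the upper bounds via dévissage are easy; the exactness of the count again needs a ghost-type argument for each $r$). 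So: endpoints and interpolation are repairable sketches, but as it stands the proposal does not prove the theorem, because its central step is either deferred to \cite{Orlovbounds} itself or left as an unexecuted induction.
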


Our results begin by looking at a minimal strong generator of $\ocC$.

\begin{lemma}\label{Lem:GenTime}
Let $G$ be a minimal strong generator of $\ocC$.
Then the homological length of $G$ is an upper bound for the generation time of $G$.
\end{lemma}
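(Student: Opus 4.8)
The plan is to show that if $G$ is a minimal strong generator of $\ocC$ with homological length $\ell$, then $\lang[\ell+1]{G} = \ocC$; that is, every indecomposable object of $\ocC$ lies in $\lang[\ell+1]{G}$. The natural route is to fix an arbitrary indecomposable $X \in \ocC$ and produce a single indecomposable object $M \in \lang[\ell]{G}$ together with a non-zero degree-$1$ morphism between $M$ and $X$; combining this with a triangle realising the cone of that morphism will place $X$ in $\lang[\ell]{G} \star \lang[1]{G} = \lang[\ell+1]{G}$.

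First I would reduce to the case that $X$ corresponds to a long, limit, or double limit arc: by Corollary \ref{Cor:NoShortArcs} no short arc is a summand of the minimal strong generator $G$, and in fact for a short arc $X$ one builds $X$ from two arcs sharing an endpoint at an accumulation point which themselves are reachable from $G$, so short arcs cost only one extra step over the generic case. Next, since $G$ is homologically connected, pick any indecomposable summand $G_1$ of $G$ and, using that $G$ has a complete orbit in $\osM$ (Theorem \ref{Thm:GensOfCn}), choose an indecomposable $G_k \in \lang[1]{G}$ whose arc crosses $\ell_X$ or shares an accumulation-point endpoint with it — such a $G_k$ exists precisely because the orbit of $G$ covers all of $\osM$, so some suspension of some summand of $G$ meets the relevant segment. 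Homological connectedness then gives a zig-zag $G_1 \dash \cdots \dash G_k$ in $\lang[1]{G}$ whose length is at most the homological length $\ell$ (after passing to a \emph{minimal} zig-zag, whose length is bounded by $\ell$ by definition).

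The core of the argument is then an iterated application of Lemma \ref{Lem:ZigZagReduction} and Lemma \ref{Lem:Reduce minimal}: starting from the minimal zig-zag $G_1 \dash G_2 \dash \cdots \dash G_k \dash X$ of length $\le \ell$ (the final edge being the degree-$1$ morphism between $G_k$ and $X$ just arranged), Lemma \ref{Lem:Reduce minimal} collapses the first $k-1$ edges into a single indecomposable object $M \in \lang[k-1]{G} \subseteq \lang[\ell-1]{G}$ sitting in a minimal zig-zag $M \dash X$ of length $1$. A non-zero degree-$1$ morphism between $M$ and $X$ gives a distinguished triangle whose third term is an extension, so $X$ (being a summand of such an extension, or one of its rotations as in the limit-arc cases of Lemma \ref{Lem:ZigZagReduction}) lies in $\lang[\ell-1]{G} \star \lang[1]{G} \star \lang[1]{G} = \lang[\ell+1]{G}$; a slightly more careful bookkeeping, keeping $M \in \lang[\ell]{G}$, yields the sharper $\lang[\ell+1]{G}$ directly. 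Since $X$ was an arbitrary indecomposable and $\lang[\ell+1]{G}$ is closed under finite direct sums and summands, $\lang[\ell+1]{G} = \ocC$, so the generation time of $G$ is at most $\ell$.

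The main obstacle is the bookkeeping at the accumulation points: Lemma \ref{Lem:ZigZagReduction} has genuinely exceptional behaviour when $\ell_X$ is a limit arc and the neighbouring object in the zig-zag is $X\wb{\pm 1}$, where the extension one must use is of $Y\wb{1}$ by $X$ rather than the naive one, and one has to check this does not inflate the step count. I would handle this by treating the limit/double-limit cases separately up front, verifying that the replacement arcs produced still lie in the correct $\lang[i]{G}$ and that minimality of the resulting shorter zig-zag is preserved — exactly the content already established in Lemma \ref{Lem:Reduce minimal}, which is why that lemma is phrased to cover these cases. The remaining routine point is confirming that the newly-inserted edge between $G_k$ and $X$ does not create a shorter zig-zag from $G_1$ to $X$ than $\ell$ allows, which follows because $G_k$ was chosen in $\lang[1]{G}$ and the homological length bounds \emph{all} minimal zig-zags among indecomposables of $\lang[1]{G}$, with $X$ reachable from some such object by construction.
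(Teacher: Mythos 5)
There is a genuine gap at the core step of your argument. You produce an indecomposable $M \in \lang[\ell]{G}$ admitting a non-zero degree-$1$ morphism to or from $X$, and then conclude that $X$ lies in $\lang[\ell+1]{G}$ because it is ``a summand of such an extension''. It is not: in the triangle determined by a class in $\Ext{1}{M}{X}$ (or $\Ext{1}{X}{M}$), the object $X$ is an \emph{outer} term, and the extension --- whose indecomposable summands correspond to the arcs obtained by resolving the crossing of $\ell_M$ and $\ell_X$ --- does not in general contain $X$. By Definition \ref{Def:StarProd}, membership of $\lang[\ell]{G} \star \lang[1]{G}$ requires $X$ to be a direct summand of the \emph{middle} term of a triangle whose outer terms lie in $\lang[\ell]{G}$ and $\lang[1]{G}$; merely exhibiting a degree-$1$ morphism between $X$ and an object of $\lang[\ell]{G}$ gives no bound on the number of steps needed to build $X$ (it is essentially the condition for weak generation). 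So the concluding inclusion $X \in \lang[\ell-1]{G}\star\lang[1]{G}\star\lang[1]{G}$ does not follow from what you have set up, and no amount of bookkeeping at the accumulation points repairs it.

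The missing idea is to capture \emph{both} endpoints of $\ell_X$ at the two ends of the zig-zag, rather than one crossing at one end. Since $G$ has a complete orbit in $\osM$, one can choose $G_1, G_{d+1} \in \lang[1]{G}$ with $\ell_{G_1}=\{x_1,z_1\}$ and $\ell_{G_{d+1}}=\{y_{d+1},x_2\}$, where $\ell_X=\{x_1,x_2\}$ and $d$ is the homological length, together with a minimal zig-zag from $G_1$ to $G_{d+1}$ of length at most $d$. Lemma \ref{Lem:Reduce minimal} applied along this zig-zag preserves the endpoint $x_1$: it produces $M_d \in \lang[d]{G}$ with $\ell_{M_d}=\{x_1,z_d\}$ and a degree-$1$ morphism between $M_d$ and $G_{d+1}$. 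The resulting triangle between $M_d$ and $G_{d+1}$ then has a middle term with $\ell_X=\{x_1,x_2\}$ among the arcs of its summands (or, in the shared-accumulation-point case, $X$ is isomorphic to $M_d$, $G_{d+1}$, or the middle term), so $X \in \lang[d]{G}\star\lang[1]{G}=\lang[d+1]{G}$. This is exactly how the paper's proof realises $X$ as a summand of a middle term rather than as an outer term. A secondary issue with your set-up: the appended edge between $G_k$ and $X$ makes the zig-zag leave $\lang[1]{G}$, so neither the homological length bound nor Lemma \ref{Lem:Reduce minimal} (both stated for zig-zags of objects in $\lang[1]{G}$) applies to it as written --- a further sign that the construction must pick up $x_1$ and $x_2$ inside $\lang[1]{G}$ from the start.
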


\begin{proof}
Let $G$ have homological length $d$.
We show that any object in $\ocC$ has a generation time at most $d$.

Let $X \in \ocC$ such that $\ell_X = \{x_1,x_2\}$.
As $G$ is a generator, then $x_1,x_2 \in \osM[G]$ by Theorem \ref{Thm:GensOfCn}, and so there exists $G_1,G_{d+1} \in \langle G \rangle_1$ with corresponding arcs $\ell_{G_1} = \{x_1,z_1\}$ and $\ell_{G_{d+1}} = \{y_{d+1},x_2\}$.
Moreover, $G$ is homologically connected, so there exists a minimal zig-zag 
\[
\begin{tikzcd}
    G_1 \arrow[r,dash] & G_2 \arrow[r,dash] & \cdots \arrow[r,dash] & G_d \arrow[r,dash] & G_{d+1}
\end{tikzcd}
\]
with length $d$, and all $G_i \in \langle G \rangle_1$.
Let $\ell_{G_i} = \{ y_i,z_i\}$ for all $i=2,\ldots,d$.
We claim that $X \in \langle G \rangle_{d+1}$.

By Lemma \ref{Lem:Reduce minimal}, there exists a series of zig-zags of the form
\[
\begin{tikzcd}
    M_i \arrow[r,dash] & G_{i+1} \arrow[r,dash] & \cdots \arrow[r,dash] & G_d \arrow[r,dash] & G_{d+1}
\end{tikzcd}
\]
with $M_i \in \langle G \rangle_i$, and $\ell_{M_i} = \{x_1,z_i\}$.
Importantly, there exists a zig-zag
\[
\begin{tikzcd}
    M_d \arrow[r,dash] & G_{d+1}
\end{tikzcd}
\]
with $M_d \in \langle G \rangle_d$.

The above zig-zag means that there is a morphism of degree 1 between $M_d$ and $G_{d+1}$, and so at least one of the following is a distinguished triangle
\[
\begin{tikzcd}[row sep = tiny]
    M_d \arrow[r] & A \arrow[r] & G_{d+1} \arrow[r] & M_d\wlb 1 \wrb,\\
    G_{d+1} \arrow[r] & B \arrow[r] & M_d \arrow[r] & G_{d+1} \wlb 1 \wrb.
\end{tikzcd}
\]
If both are distinguished triangles, then the object $X$ must be a direct summand of either $A$ or $B$, as $\ell_{M_d} = \{x_1,z_d\}$, $\ell_{G_{d+1}} = \{y_{d+1}, x_2\}$ and $\ell_X = \{x_1,x_2\}$.
If only one of them is a distinguished triangle, then by \cite[Prop. 3.14]{Paquette2020}, the arcs $\ell_{M_d}$ and $\ell_{G_{d+1}}$ share an endpoint at an accumulation point.
Hence $X$ is isomorphic to either $M_d$ or $G_{d+1}$, and therefore in $\lang[d]{G}$, or $X$ is isomorphic to the middle term of the distinguished triangle.

Therefore $X \in \langle G \rangle_{d+1}$, and therefore $G$ has generation time at most $d$.
\end{proof}

Consequently, we find the Orlov spectrum of $\ocC[1]$.

\begin{corollary}\label{Cor:Orlov C1}
    The Orlov spectrum of $\ocC[1]$ is
    \[
    \cO(\ocC[1]) = \{1\}.
    \]
\end{corollary}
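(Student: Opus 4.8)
The plan is to show that \emph{every} classical generator of $\ocC[1]$ has generation time exactly $1$; since $E$ is a generator, $\cO(\ocC[1])$ is then the singleton $\{1\}$. The lower bound is immediate: Corollary \ref{Cor:Rdim} gives $\rdim{\ocC[1]} = 1$, and as the Rouquier dimension is the infimum of the Orlov spectrum, every generation time of a generator of $\ocC[1]$ is at least $1$. So the only real content is the upper bound $\langle G \rangle_2 = \ocC[1]$ for an arbitrary generator $G$ of $\ocC[1]$.

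The first step is to record what arcs look like when $n=1$. There is a unique accumulation point $a_1$, and $\sM$, being discrete, infinite and accumulating at $a_1$ from both sides, is order-isomorphic to $\bZ$ and hence a single orbit under the successor map. Consequently every arc is either a short arc (both endpoints in $\sM$) or a limit arc $\{a_1,z\}$ with $z \in \sM$; there are no long arcs and no double limit arcs. Moreover, since $\sM$ is a single successor-orbit, any two limit arcs are shifts of one another, so in particular every limit arc is a shift of the object $E = X_1$ of Lemma \ref{Lem:E is a gen}. Because $\langle X \rangle_1 = \langle X\wb{j} \rangle_1$ for all $j$, the whole filtration is shift-invariant, so $\langle L \rangle_k = \langle E \rangle_k$ for every $k$ and every limit arc $L$; by Lemma \ref{Lem:E is a gen} this gives $\langle L \rangle_2 = \ocC[1]$.

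The second step is that every generator $G$ of $\ocC[1]$ has a limit arc as a direct summand. Indeed, by Theorem \ref{Thm:GensOfCn}, $G$ is homologically connected with a complete orbit in $\osM$, so $a_1 \in \osM[G]$. But the predecessor and successor of a marked point are marked points, so the endpoints of all shifts of a short arc lie in $\sM$; hence the orbit in $\osM$ of any short-arc summand of $G$ contains no accumulation point. Since for $n=1$ the only arcs are short arcs and limit arcs, the condition $a_1 \in \osM[G]$ forces some indecomposable summand $L_0$ of $G$ to be a limit arc. As $L_0$ is a direct summand of $G$ we have $\langle L_0 \rangle_i \subseteq \langle G \rangle_i$ for all $i$, so $\langle L_0 \rangle_2 \subseteq \langle G \rangle_2$; and $\langle L_0 \rangle_2 = \ocC[1]$ by the first step, whence $\langle G \rangle_2 = \ocC[1]$ and $G$ has generation time $\le 1$. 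Combined with the lower bound, every generator of $\ocC[1]$ has generation time $1$, so $\cO(\ocC[1]) = \{1\}$ (the set being nonempty since $1 \in \cO(\ocC[1])$ by Lemma \ref{Lem:E is a gen}).

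I expect the only genuinely delicate point to be the structural claim in the first step, namely that a single accumulation point forces $\sM$ to be order-isomorphic to $\bZ$, so that all limit arcs are mutually shift-equivalent and in particular shifts of $E$; everything else is bookkeeping with the definitions of $\langle - \rangle_i$ and of the orbit $\osM[-]$, together with Theorem \ref{Thm:GensOfCn}, Lemma \ref{Lem:E is a gen} and Corollary \ref{Cor:Rdim}.
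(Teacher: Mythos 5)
Your proof is correct, but it takes a genuinely different route from the paper's. The paper argues via the homological-length machinery: by Corollary \ref{Cor:NoShortArcs} a \emph{minimal} generator of $\ocC[1]$ has only limit-arc summands, any two limit arcs share the unique accumulation point and so admit a degree-one morphism between them, hence every minimal generator has homological length $1$, and Lemma \ref{Lem:GenTime} then bounds the generation time by $1$. You instead bypass zig-zags, Corollary \ref{Cor:NoShortArcs} and Lemma \ref{Lem:GenTime} entirely: you observe that for $n=1$ every arc is short or limit, that all limit arcs are shifts of the explicit generator $E$, and that any generator $G$ must (by the complete-orbit condition in Theorem \ref{Thm:GensOfCn}) contain a limit-arc summand $L_0$, whence $\ocC[1] = \langle L_0 \rangle_2 \subseteq \langle G \rangle_2$ by shift-invariance of $\langle - \rangle_k$ and Lemma \ref{Lem:E is a gen}. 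Your approach is more elementary and handles \emph{all} generators directly rather than reducing to minimal ones (the paper's reduction implicitly uses the remark in Section \ref{Subsec:ClassGens} that enlarging a generator cannot increase generation time), and you also make the lower bound explicit via Corollary \ref{Cor:Rdim}, which the paper leaves tacit; what the paper's route buys is that it exercises the same homological-length apparatus used for the general bound in Theorem \ref{Thm:Orlov} and yields the slightly finer statement that every minimal generator of $\ocC[1]$ has homological length exactly $1$. The one structural point you flag as delicate --- that a single two-sided accumulation point forces $\sM$ to be a single successor-orbit, so all limit arcs are shift-equivalent --- is indeed the load-bearing step, and it holds by the admissibility conditions in Definition \ref{Def:admissible} together with $a_1^- = a_1^+ = a_1$.
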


\begin{proof}
    We show that there is no minimal generator $G$ with homological length $d \geq 2$, and so the generation time of $G$ has an upper bound of 1.
    Let 
    \[
    \begin{tikzcd}
        G_1 \arrow[r,dash] & G_2 \arrow[r,dash,dashed] & G_d \arrow[r,dash] & G_{d+1}
    \end{tikzcd}
    \]
    be a zig-zag with objects in $\langle G \rangle_1$.
    Corollary \ref{Cor:NoShortArcs} tells us that all $\ell_{G_i}$ must be limit arcs as $G$ is minimal.
    However, by \cite[Prop. 3.14]{Paquette2020} there is a non-trivial $\mathrm{Ext}^1$-space between indecomposable objects corresponding to limit arcs that share an endpoint at an accumulation point, which all $\ell_{G_i}$ do as there is only a single accumulation point for $\ocC[1]$.
    Hence there is a minimal zig-zag
    \[
    \begin{tikzcd}
        G_1 \arrow[r,dash]  & G_{d+1},
    \end{tikzcd}
    \]
    and so the homological length of any minimal generator $G$ is 1, therefore the upper bound on the generation time of $G$ is also 1.
\end{proof}

We now look at the homological length of minimal generators in $\ocC$ in an effort to find a bound for the Orlov spectrum of $\ocC$.
To do this, we show that for any integer $d$ up to a given value, there exists a minimal generator with homological length equal to $d$.

\begin{proposition}\label{Prop:1 to 2n-2}
Let $1 \leq d \leq 2n-2$ be an integer.
Then there exists a generator $M$ of $\ocC$ such that $M$ has homological length $d$.
\end{proposition}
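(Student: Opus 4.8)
The plan is to explicitly construct, for each $1 \leq d \leq 2n-2$, a homologically connected object $M$ with a complete orbit in $\osM$ (hence a generator by Theorem \ref{Thm:GensOfCn}) whose homological length is exactly $d$. The natural building blocks are arcs pinned at the accumulation points $a_1, \ldots, a_n$, in the spirit of the generator $E$ from Lemma \ref{Lem:E is a gen}, but arranged so that the "shortest route" between two of the summands, measured in zig-zags of degree-$1$ morphisms, has length exactly $d$. Recall that a degree-$1$ morphism between indecomposables $U, V$ exists precisely when $\ell_U$ and $\ell_{V\wb{-1}}$ cross, or when they are (double) limit arcs sharing an accumulation point with $\ell_{V\wb{-1}}$ an anticlockwise rotation of $\ell_U$; so "zig-zag length" is controlled by how many times one must pass through an intermediate arc.

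First I would treat the base and near-base cases. For $d = 1$, take $M = E$ as in Lemma \ref{Lem:E is a gen}: all summands are fans at $a_1$ sharing the endpoint $a_1$, so any two are joined by a zig-zag of length $1$ (via the rotation criterion for limit arcs and the crossing criterion for the $X_i$), and $E$ has a complete orbit; thus homological length $1$. For general $d$ in the range $2 \leq d \leq 2n-2$, I would build $M$ out of two "fans": a fan of limit arcs and long arcs based at $a_1$ covering the segments $(a_1, a_2), \ldots$ up to some accumulation point $a_k$, and a second fan based at a distant accumulation point $a_m$ covering the remaining segments, chosen so that the two fans together have a complete orbit but the only degree-$1$ links between them must route through a prescribed chain of $d-1$ intermediate limit/long arcs. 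Concretely, by spacing the two basepoints $k$ accumulation points apart and inserting the minimal "bridge" arcs $\{a_1, a_2\}, \{a_2, a_3\}, \ldots$ one forces the shortest zig-zag between a summand in one fan and a summand in the other to have length exactly $d$, while every other pair is connected by a shorter zig-zag; the supremum over minimal zig-zags is then $d$.

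The two things to verify carefully are (i) that $M$ is genuinely homologically connected with homological length exactly $d$ — i.e.\ the constructed bridge is a shortest path and no shorter zig-zag sneaks through, and (ii) that $M$ has a complete orbit in $\osM$, so Theorem \ref{Thm:GensOfCn} applies. For (ii), I would simply check that the basepoints $a_1, a_m$ together with the far endpoints of the fan arcs and the bridge arcs exhaust $\osM$ segment by segment. For (i), the upper bound $\le d$ follows by exhibiting the length-$d$ zig-zag explicitly and invoking Lemma \ref{Lem:ZigZagReduction} / Lemma \ref{Lem:Reduce minimal} to see it cannot be improved past the structural obstruction; the lower bound $\ge d$ is the delicate part — one must argue that any zig-zag connecting a summand of the first fan to a summand of the second fan must cross the "gap" between the two basepoints, and each crossing of a bridge arc contributes a step, so at least $d$ morphisms are needed. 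This lower-bound argument, showing that the combinatorics of crossings in $(S,\osM)$ really do force the zig-zag to be long, is the main obstacle: it amounts to a planarity/separation argument on the disc, ruling out shortcuts through double limit arcs or through higher suspensions, much as in the proof of Lemma \ref{Lem:Reduce minimal}. Once both bounds are in hand, $M$ is the desired generator of homological length $d$, completing the induction-free case analysis over $1 \leq d \leq 2n-2$.
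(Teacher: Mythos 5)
Your overall strategy---invoke Theorem \ref{Thm:GensOfCn} and build a generator whose indecomposable summands form a chain so that the minimal zig-zag between two extreme summands has length exactly $d$---is the same idea the paper uses, and your base case $d=1$ via $E$ agrees with Lemma \ref{Lem:E is a gen}. But as written there is a genuine gap. First, the construction for $2 \leq d \leq 2n-2$ is never pinned down: you do not say which arcs form the two fans, how many bridge arcs are used, how the spacing of the basepoints is chosen as a function of $d$ and $n$, or why the resulting object still has a complete orbit in $\osM$ for every such $d$ and $n$ simultaneously. Second, and more seriously, the claim that the homological length is exactly $d$ is not proved: you explicitly defer the lower bound (``the delicate part'', ``the main obstacle''), namely that no zig-zag in $\lang[1]{M}$---which contains \emph{all} suspensions of the summands, including limit arcs that admit degree-one morphisms via rotation about a shared accumulation point---can join the two fans in fewer than $d$ steps. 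That separation statement is precisely the mathematical content of the proposition, so deferring it leaves a plan rather than a proof. You also never check the complementary point that the bridge does not create minimal zig-zags \emph{longer} than $d$ between some other pair of indecomposables, which is needed to conclude the supremum equals $d$.

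For comparison, the paper sidesteps any global planarity/separation argument by an induction that modifies one summand at a time: starting from $E$, it replaces $Y_1$ by $Z_1$ with $\ell_{Z_1}=\{z_1,a_2\}$, then $X_2$ by $Z_2$ with $\ell_{Z_2}=\{a_2,z_2\}$, and so on. At each step the new summand $Z_{d+1}$ admits degree-one morphisms (up to suspension) only to $Z_d$ among the summands, so exactly one minimal zig-zag grows by one step, and any zig-zag that previously passed through the removed summand in its interior can be shortened, so no other minimal length increases. If you wish to salvage the two-fan construction, you must supply an analogous local argument, or a genuine crossing/separation lemma in $(S,\osM)$, establishing both the lower and upper bounds on the homological length of your candidate $M$.
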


\begin{proof}
We construct a generator $M_d$ for each $1 \leq d \leq 2n-2$ using induction on the generator $E$.

We know that $E$ has homological length 1 by Lemma \ref{Lem:E is a gen}, so we construct a new object from $E$, called $M_2$ by replacing $Y_1$ with the object $Z_1$, corresponding to the arc $\ell_{Z_1}=\{z_1,a_2\}$.
It is clear that $M_2$ is homologically connected and has a complete orbit in $\osM$, and so by Theorem \ref{Thm:GensOfCn} we know $M_2$ is a generator of $\ocC$.
We construct $M_3$ from $M_2$ by replacing $X_2$ with the object $Z_2$, corresponding to the arc $\ell_{Z_2}=\{a_2,z_2\}$, and again we see that $M_3$ is a generator by Theorem \ref{Thm:GensOfCn}.
We repeat this construction for all $M_d$, $1 \leq d \leq 2n-2$.

Assume that $M_d$ has homological length $d$, we show that $M_{d+1}$ has homological length $d+1$.
By Lemma \ref{Lem:GenTime} we know that there exists at least one minimal zig-zag between indecomposable summands of $M_d$ of length $d$.
We have a minimal zig-zag of the form
\[
\begin{tikzcd}
H \arrow[r,dash,"f_1"] & X_1 \arrow[r,dash,dashed] & Z_1 \arrow[r,dash,"f_{d-1}"]& Z_{d-1} \arrow[r,dash,"f_d"] & Z_d,
\end{tikzcd}
\]
where $H \not\cong X_1$ is in both $\langle E \rangle_1$ and $\langle M_d \rangle_1$.
However, when we replace an indecomposable summand, say $N$, in $M_d$ with $Z_{d+1}$, we only have length 1 minimal zig-zags between $Z_{d+1}$ and $Z_d$, and all other minimal zig-zags between $Z_{d+1}$ and another indecomposable summand of $M_{d+1}$ contain $Z_d$ up to suspension.
Hence we get a minimal zig-zag
\[
\begin{tikzcd}
H \arrow[r,dash,"f_1"] & X_1 \arrow[r,dash,dashed] & Z_1 \arrow[r,dash,"f_d"]& Z_d \arrow[r,dash,"f_{d+1}"] & Z_{d+1},
\end{tikzcd}
\]
which has length $d+1$.

To see that we get no other new minimal zig-zags of length $l>d+1$ between indecomposable summands of $M_{d+1}$, consider that any minimal zig-zag in $\lang{M_d}$ containing $N$ in the middle will not be minimal as any sequence in a zig-zag of the form
\[
\begin{tikzcd}
X_1 \wlb j \wrb \arrow[r,dash] & N \arrow[r,dash] & H
\end{tikzcd}
\]
can be reduced to a sequence of the form
\[
\begin{tikzcd}
X_1 \wlb j \wrb \arrow[r,dash] & H
\end{tikzcd}
\]
by \cite{Paquette2020}.
Hence a minimal zig-zag containing $N$ may only contain $N$ either at the start or end of the zig-zag, and so by removing $N$ as a summand of $M_{d+1}$ means that no minimal zig-zag of this form has an increased length.

Therefore we see that the homological length of $M_{d+1}$ is $d+1$.
Thus, by induction, we see that there exists some object with homological length $d$ for all $d=1,\ldots,2n-2$.
\end{proof}

Here we see an example of one of the minimal strong generators constructed in Proposition \ref{Prop:1 to 2n-2}.

\begin{figure}[h]
\centering
\begin{tikzpicture}
\draw (0,0) circle (4cm);
\draw[fill=white] (0,4) circle (0.1cm);
\draw[fill=white] ({4*sin(72)},{4*cos(72)}) circle (0.1cm);
\draw[fill=white] ({4*sin(144)},{4*cos(144)}) circle (0.1cm);
\draw[fill=white] ({4*sin(216)},{4*cos(216)}) circle (0.1cm);
\draw[fill=white] ({4*sin(288)},{4*cos(288)}) circle (0.1cm);
\draw[thin] (0,3.9) .. controls ({3.6*sin(342)},{3.6*cos(342)}) .. ({4*sin(324)},{4*cos(324)});
\draw[thin] (0,3.9) .. controls ({3.6*sin(18)},{3.6*cos(18)}) .. ({4*sin(36)},{4*cos(36)});
\draw[thin] (0,3.9) .. controls ({2.4*sin(54)},{2.4*cos(54)}) .. ({4*sin(108)},{4*cos(108)});
\draw[thin] (0,3.9) .. controls ({3*sin(36)},{3*cos(36)}) .. ({3.9*sin(72)},{3.9*cos(72)});
\draw[thin] (0,3.9)--({3.9*sin(144)},{3.9*cos(144)});
\draw[thin] ({3.9*sin(288)},{3.9*cos(288)}) .. controls ({3.6*sin(306)},{3.6*cos(306)}) .. ({4*sin(324)},{4*cos(324)});
\draw[thin] ({3.9*sin(288)},{3.9*cos(288)}) .. controls ({3.6*sin(270)},{3.6*cos(270)}) .. ({4*sin(252)},{4*cos(252)});
\draw[thin] ({3.9*sin(216)},{3.9*cos(216)}) .. controls ({3.6*sin(234)},{3.6*cos(234)}) .. ({4*sin(252)},{4*cos(252)});
\draw[thin] ({3.9*sin(216)},{3.9*cos(216)}) .. controls ({3.6*sin(198)},{3.6*cos(198)}) .. ({4*sin(180)},{4*cos(180)});
%\draw[thin,dashed] ({3.9*sin(144)},{3.9*cos(144)}) .. controls ({3.6*sin(162)},{3.6*cos(162)}) .. ({4*sin(180)},{4*cos(180)});
\node at (0,4.3) {$a_1$};
\node at ({4.3*sin(72)},{4.3*cos(72)}) {$a_5$};
\node at ({4.3*sin(144)},{4.3*cos(144)}) {$a_4$};
\node at ({4.3*sin(216)},{4.3*cos(216)}) {$a_3$};
\node at ({4.3*sin(288)},{4.3*cos(288)}) {$a_2$};
\node at ({4.2*sin(36)},{4.2*cos(36)}) {$z_5$};
\node at ({4.2*sin(108)},{4.2*cos(108)}) {$z_4$};
\node at ({4.2*sin(180)},{4.2*cos(180)}) {$z_3$};
\node at ({4.2*sin(252)},{4.2*cos(252)}) {$z_2$};
\node at ({4.2*sin(324)},{4.2*cos(324)}) {$z_1$};
\node at ({3.3*sin(342)},{3.3*cos(342)}) {$\ell_{X_1}$};
\node at ({3.3*sin(306)},{3.3*cos(306)}) {$\ell_{Z_2}$};
\node at ({3.3*sin(270)},{3.3*cos(270)}) {$\ell_{Z_3}$};
\node at ({3.3*sin(234)},{3.3*cos(234)}) {$\ell_{Z_4}$};
\node at ({3.3*sin(198)},{3.3*cos(198)}) {$\ell_{Z_5}$};
\node at ({3.7*sin(35)},{3.7*cos(35)}) {$\ell_{X_5}$};
\node at ({2.1*sin(54)},{2.1*cos(54)}) {$\ell_{X_4}$};
\node at ({3.5*sin(70)},{3.5*cos(70)}) {$\ell_{Y_4}$};
\node at (1,0) {$\ell_{Y_3}$};
\end{tikzpicture}
\caption{The minimal strong generator $M_5$ in $\ocC[5]$.}
\label{fig:Md example}
\end{figure}

Finally, we show that there exists no minimal strong generator of $\ocC$ with a homological length greater than $2n-2$, and so compute an upper bound for the Orlov spectrum of $\ocC$.

\begin{theorem}\label{Thm:Orlov}
The Orlov spectrum of $\ocC$ for $n \geq 2$ is bounded above by $2n-2$.
That is
\[
\cO(\ocC) \subseteq \{1,\ldots,2n-2\}.
\]
\end{theorem}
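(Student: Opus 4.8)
The plan is to combine Lemma \ref{Lem:GenTime} with a structural bound on the homological length of any minimal strong generator of $\ocC$. By Lemma \ref{Lem:GenTime}, if $G$ is a minimal strong generator then its generation time is at most its homological length, so it suffices to show that every minimal strong generator of $\ocC$ has homological length at most $2n-2$. Combined with Corollary \ref{Cor:Orlov C1}-style reasoning (here $n\geq 2$) and the fact that $\rdim{\ocC}=1>0$ by Corollary \ref{Cor:Rdim}, this gives $\cO(\ocC)\subseteq\{1,\ldots,2n-2\}$. Proposition \ref{Prop:1 to 2n-2} already exhibits generators realising each value $1\leq d\leq 2n-2$, so only the upper bound needs proof.

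First I would fix a minimal strong generator $G$ of $\ocC$ and a minimal zig-zag
\[
\begin{tikzcd}
G_1 \arrow[r,dash] & G_2 \arrow[r,dash,dashed] & G_d \arrow[r,dash] & G_{d+1}
\end{tikzcd}
\]
of objects in $\langle G\rangle_1$, of maximal length $d$ realising the homological length. By Corollary \ref{Cor:NoShortArcs}, no $G_i$ may be a short arc, so every $\ell_{G_i}$ is a long arc, limit arc, or double limit arc. Consecutive arcs $\ell_{G_i},\ell_{G_{i+1}}$ either cross or share an endpoint at an accumulation point; minimality forces $\ell_{G_i}$ and $\ell_{G_j}$ to neither cross nor share an accumulation-point endpoint whenever $|i-j|\geq 2$. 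The key geometric idea is to track how the zig-zag ``advances'' around the circle: each crossing or shared endpoint at an accumulation point moves the relevant endpoint past at least one accumulation point, and since there are only $n$ accumulation points $a_1,\ldots,a_n$ (equivalently $2n$ segments-and-accumulation-points in the $\osM$ bookkeeping), a minimal non-repeating zig-zag can have length at most $2n-2$ before it would be forced to revisit a region and hence admit a shortcut, contradicting minimality. Concretely, one associates to the zig-zag a monotone-ish sequence of marked regions (segments between consecutive accumulation points together with the accumulation points themselves) swept out by the endpoints of the $\ell_{G_i}$; a repetition in this sequence produces a sub-zig-zag that can be contracted using Lemma \ref{Lem:ZigZagReduction} and Lemma \ref{Lem:Reduce minimal}, or using the loop-removal argument of Lemma \ref{Lem:Loops aren't minimal}, contradicting either minimality of the zig-zag or minimality of the generator $G$.

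The main obstacle is making the ``sweeping'' bookkeeping precise: one must carefully handle the three arc types simultaneously, and in particular the behaviour of limit and double limit arcs, where $\ell\wb{1}=\ell\wb{-1}=\ell$ for double limit arcs and where $\Ext^1$ between limit arcs sharing an accumulation-point endpoint behaves differently from the crossing case (cf.\ \cite[Prop.\ 3.14]{Paquette2020}). I would argue that between two consecutive steps of a minimal zig-zag, one of the two endpoints of the ``running'' arc $\ell_{M_i}$ (in the notation of Lemma \ref{Lem:Reduce minimal}, with $\ell_{M_i}=\{x_1,z_i\}$) must move strictly past an accumulation point, since otherwise the two arcs would already cross or share an endpoint in a way that shortens the zig-zag; moreover the fixed endpoint $x_1$ contributes one ``used'' accumulation point at the start and the terminal arc $\ell_{G_{d+1}}$ pins down another at the end. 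With $n$ accumulation points available and two of them consumed by the endpoints of the overall zig-zag, the running endpoint can advance through at most $2n-2$ intermediate segments, giving $d\leq 2n-2$.

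Finally, having established that every minimal strong generator has homological length at most $2n-2$, I invoke the observation after Definition~\ref{Def:langle} that any strong generator $G'\cong E\oplus F$ built on a minimal strong generator $E$ satisfies $\langle E\rangle_i\subseteq\langle G'\rangle_i$, so its generation time is bounded by that of $E$; hence every generator of $\ocC$ has generation time at most $2n-2$. Since $\ocC$ has infinitely many indecomposables up to shift, Lemma \ref{Lem:Rdim=0} gives that $0\notin\cO(\ocC)$, and therefore $\cO(\ocC)\subseteq\{1,\ldots,2n-2\}$ for $n\geq 2$, as claimed.
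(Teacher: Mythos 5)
Your framing coincides with the paper's: by Lemma \ref{Lem:GenTime} it suffices to show every minimal strong generator has homological length at most $2n-2$, the remark after Definition \ref{Def:langle} extends the bound to arbitrary generators (each of which contains a minimal one as a direct summand), and $0\notin\cO(\ocC)$ since $\ocC$ has infinitely many indecomposables. The gap is exactly at the step that constitutes the theorem: you never prove the bound on homological length, and you yourself flag the ``sweeping'' bookkeeping as the main obstacle. Moreover the per-step claim you propose --- that each step of a minimal zig-zag moves the running endpoint strictly past an accumulation point --- cannot be correct as stated: there are only $n$ accumulation points, so it would cap homological length at roughly $n$, contradicting Proposition \ref{Prop:1 to 2n-2}, which exhibits minimal generators (e.g.\ $M_{2n-2}$) of homological length $2n-2$ whose extremal zig-zag, built from the arcs $\{z_i,a_{i+1}\}$ and $\{a_{i+1},z_{i+1}\}$, at every other step moves only \emph{onto} an accumulation point rather than past one. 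The weaker ``one region per step'' version (regions being the $n$ segments together with the $n$ accumulation points) is also not automatic: consecutive terms of a minimal zig-zag could a priori be two suspensions of one and the same long-arc summand, which cross while neither endpoint changes region, so ruling this out needs minimality of the generator in a way your sketch does not supply.

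What the paper actually uses, and what is missing from your proposal, is a pigeonhole argument on summands rather than a geometric sweep. Viewing the $n$ segments and $n$ accumulation points as $2n$ vertices and the non-isomorphic indecomposable summands of $M$ as edges, a generator with $2n$ or more summands contains a cycle, and Lemma \ref{Lem:Loops aren't minimal} then removes a summand while retaining a generator, contradicting minimality; hence a minimal generator has at most $2n-1$ summands. A minimal zig-zag of length $2n-1$ in $\lang[1]{M}$ must therefore repeat a summand up to suspension, and the paper shows this repetition forces either a short arc (excluded by Corollary \ref{Cor:NoShortArcs}), or a shortcut reducing the zig-zag to length at most $2n-2$, or again a cycle in the graph, contradicting minimality. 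Without this counting step, or a genuinely worked-out replacement for it, your argument does not establish $d\leq 2n-2$, and so does not prove the theorem.
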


\begin{proof}
By Proposition \ref{Prop:1 to 2n-2} we know that there is a generator with homological length $2n-2$ and so has generation time at most $2n-2$ by Lemma \ref{Lem:GenTime}, so we only need to show that there exists no minimal strong generator with homological length greater than $2n-2$.
To show this, we consider two situations, one where some generator $M$ has $\geq 2n$ non-isomorphic indecomposable direct summands, and one where $M$ has $< 2n$ non-isomorphic indecomposable direct summands.

Suppose $M$ has $\geq 2n$ non-isomorphic indecomposable direct summands.
Then if we consider each segment and accumulation point as a vertex, and each arc as an edge, then may construct each generator as a graph with $2n$ vertices.
Basic results from graph theory tell us that if we have $\geq 2n$ edges on $2n$ vertices, then we must have a loop somewhere in the graph, and this loop then corresponds to a zig-zag of the form
\[
\begin{tikzcd}
M_1\wlb a_1 \wrb \arrow[r,dash] & M_2\wlb a_2 \wrb \arrow[r,dash,dashed] & M_d\wlb a_d \wrb \arrow[r,dash]& M_{d+1}\wlb a_{d+1} \wrb \arrow[r,dash] & M_1\wlb a_1 \wrb,
\end{tikzcd}
\]
such that $\ell_{M_i \wlb a_i \wrb}$ either crosses $\ell_{M_{i-1}\wlb a_{i-1}\wrb }$ and $\ell_{M_{i+1} \wlb a_{i+1} \wrb}$, or shares an endpoint at an accumulation point, and $\ell_{M_{d+1}\wlb a_{d+1} \wrb}$ crosses $\ell_{M_1\wlb a_1 \wrb}$ or shares an endpoint at an accumulation point.
Lemma \ref{Lem:Loops aren't minimal} tells us then that the object $M'$ such that $M \cong M' \oplus M_1$ is also a generator, and so $M$ is not a minimal generator.

Now suppose that $M$ has $2n-1$ non-isomorphic indecomposable direct summands.
If $M$ has homological length $2n-1$, then there must exist a minimal zig-zag of length $2n-1$, let this zig-zag be
\[
\begin{tikzcd}
M_1 \arrow[r, dash] & M_2\wlb m_2 \wrb \arrow[r,dash,dashed] & M_{2n-1}\wlb m_{2n-1} \wrb \arrow[r,dash] & M_i\wlb j \wrb.
\end{tikzcd}
\]
There exists some subsequence of this zig-zag
\[
\begin{tikzcd}
M_1 \arrow[r, dash] & M_2\wlb m_2 \wrb \arrow[r,dash,dashed] & M_{i-1}\wlb m_{i-1} \wrb \arrow[r,dash] & M_i\wlb m_i \wrb,
\end{tikzcd}
\]
of length $i-1 < 2n-1$.
Suppose that there exists no morphisms of degree 1 in either direction between $M_i\wlb m_i \wrb$ and $M_i\wlb j \wrb$, then $\ell_{M_i}$ is a short arc, and so $M$ is not a minimal generator by Corollary \ref{Cor:NoShortArcs}.

Now suppose that there does exist a morphism of degree 1 between $M_i\wlb m_i \wrb$ and $M_i\wlb j \wrb$, then we have a zig-zag
\[
\begin{tikzcd}
M_1 \arrow[r, dash] & M_2\wlb m_2 \wrb \arrow[r,dash,dashed] & M_{i-1}\wlb m_{i-1} \wrb \arrow[r,dash] & M_i\wlb m_i \wrb \arrow[r,dash] & M_i\wlb j \wrb,
\end{tikzcd}
\]
of length $i \leq 2n-1$, where the length is equal when $M_i \cong M_{2n-1}$.
As we only need to consider zig-zags of length $ \geq 2n-1$ we may assume that $M_i \cong M_{2n-1}$, and we need to show that the zig-zag
\begin{equation}\label{Eq:longzigzag}
\begin{tikzcd}[column sep=1.2em]
M_1 \arrow[r, dash] & M_2\wlb m_2 \wrb \arrow[r,dash,dashed] & M_{2n-2}[m_{2n-2}] \arrow[r,dash] & M_{2n-1}\wlb m_{2n-1} \wrb \arrow[r,dash] & M_{2n-1}\wlb j \wrb,
\end{tikzcd}
\end{equation}
is not minimal.

Suppose there are two arcs $\ell_{M_i}$ and $\ell_{M_{i'}}$ that share $a \in L(\mathscr{M})$ as an endpoint.
If $i' \neq i \pm 1$, then \eqref{Eq:longzigzag} is not a minimal zig-zag.
Therefore, suppose that $i'=i+1$, then there exists a zig-zag
\[
\begin{tikzcd}
M_i\wlb m_i \wrb \arrow[r,dash] & M_{i+1}\wlb m'_{i+1} \wrb
\end{tikzcd}
\]
such that we have the zig-zag
\[
\begin{tikzcd}
M_1 \arrow[r, dash] & M_2\wlb m_2 \wrb \arrow[r,dash,dashed] & M_{2n-2}\wlb m'_{2n-2} \wrb \arrow[r,dash] & M_{2n-1}\wlb m'_{2n-1} \wrb \cong M_{2n-1}\wlb j \wrb,
\end{tikzcd}
\]
which has length $2n-2$, and so the minimal zig-zag between $M_1$ and $M_{2n-1}\wlb j \wrb$ is at most length $2n-2$.

Now suppose that every $a_l \in L(\mathscr{M})$ is the endpoint of exactly one arc $\ell_{M_l}$, for $M_l$ indecomposable direct summands of $M$.
Then between the $n$ segments of $\mathscr{M}$, there are at least $n$ long arcs corresponding to direct summands of $M$.
However, again via seeing the segments as vertices and the long arcs as edges, there must be a cycle in the induced graph, and so $M$ is not minimal as we could remove any of the arcs in the cycle, and the resulting object would still have a complete orbit and be homologically connected, and so $M$ is not a minimal generator.
Hence there are no minimal strong generators of $\ocC$ with a homological length greater than $2n-2$.
Thus $2n-2$ is an upper bound of the Orlov spectrum of $\ocC$.
\end{proof}

\printbibliography

\end{document}